\newcommand{\R}{\mathbb{R}}
\providecommand{\abs}[1]{\lvert#1\rvert}
\DeclareMathOperator*{\argmin}{argmin}
\newtheorem{theorem}{Theorem}[section]
\newtheorem{lemma}{Lemma}[section]
\newtheorem{proposition}{Proposition}[section]
\newtheorem{corollary}{Corollary}
\newtheorem{remark}{Remark}[section]
\theoremstyle{definition}
\numberwithin{equation}{section}
\newtheorem{assumption}{Assumption}
\begin{document}

\title{\textbf{Optimal Drift Rate Control and Impulse Control for a Stochastic Inventory/Production System}}

\author{Ping Cao\thanks{School of Management, University of Science and Technology of China, Hefei, 230026, China.
    pcao@ustc.edu.cn}\ \ and
Dacheng Yao\thanks{Academy of Mathematics and Systems Science, Chinese Academy of Sciences, Beijing, 100190, China. dachengyao@amss.ac.cn}}

\date{}
\maketitle

\begin{abstract}
In this paper, we consider joint drift rate control and impulse control for a stochastic inventory system under long-run average cost criterion.
Assuming the inventory level must be nonnegative, we prove that a $\{(0,q^{\star},Q^{\star},S^{\star}),\{\mu^{\star}(x): x\in[0, S^{\star}]\}\}$ policy is an optimal joint control policy,
where the impulse control follows the control band policy $(0,q^{\star},Q^{\star},S^{\star})$,
that brings the inventory level up to $q^{\star}$ once it drops to $0$ and brings it down to $Q^{\star}$ once it rises to $S^{\star}$,
and the drift rate only depends on the current inventory level and is given by function $\mu^{\star}(x)$ for the inventory level $x\in[0,S^{\star}]$.
The optimality of the $\{(0,q^{\star},Q^{\star},S^{\star}),\{\mu^{\star}(x): x\in[0,S^{\star}]\}\}$  policy is proven by using a lower bound approach, in which a critical step is to prove the existence and uniqueness of optimal policy parameters. To prove the existence and uniqueness, we develop a novel analytical method to solve a free boundary problem consisting of an ordinary differential equation (ODE) and several free boundary conditions.
Furthermore, we find that the optimal drift rate $\mu^{\star}(x)$ is firstly increasing and then decreasing as $x$ increases from $0$ to $S^{\star}$ with a turnover point between $Q^{\star}$ and $S^{\star}$.
\end{abstract}

\textbf{Keywords:}
drift rate control; impulse control; Brownian motion; inventory control

\textbf{2010 Mathematics Subject Classification:}
90B05, 93E20, 60J70,49N25, 49K15

\section{Introduction}

In this paper, we study a continuous-review stochastic inventory/production system, in which supply/production rate and inventory level can be adjusted.
The \emph{netput} inventory level process, capturing the difference of regular supply/production process
and the demand process, has the following representation
\begin{equation}
\label{eq:W}
W_t=x_0+\int_0^t \mu_s \, \mathrm{d}s+\sigma B_t,\quad t\geq0,
\end{equation}
where $W_0=x_0\in\mathbb{R}^+$ is the initial inventory level, $\mu_t$ is the drift rate at time $t$ and is a decision variable,
$\sigma^2>0$ is the variance, and $B=\{B_t:t\geq0\}$ is a standard one-dimensional Brownian motion with $B_0=0$.
The system manager can modify the drift rate at any time,
and can also increase and decrease the inventory level at any time by any amount desired.
Let $\bm{\mu}=\{\mu_t\in\mathbb{R}: t\geq0\}$ denote the drift rate control process,
and let $\bm{Y}=(Y_1,Y_2)$ be a pair of impulse controls with $Y_i=\{(\tau_{n}^i,\xi_{n}^i):n\geq 1\}$, $i=1,2$,
where $\tau_n^1$ ($\tau_n^2$) represents the $n$th time to increase (decrease) the inventory level and
$\xi_n^1$ ($\xi_n^2$) denotes the corresponding increment (decrement).
Then, the controlled inventory level is given by
\begin{equation}
\label{eq:X-1}
X_t=x_0+\int_0^t \mu_s \, \mathrm{d}s+\sigma B_t+\sum_{n=1}^{N_t^1} \xi_n^1-\sum_{n=1}^{N_t^2} \xi_n^2,\quad t\geq0,
\end{equation}
where $N_t^i=\sup\{n:\tau^i_n\leq t\}$ denotes the number of adjustments of $Y_i$ up to time $t$. Moreover, the inventory level must be nonnegative at all times.
There are three types of costs: the holding cost and drift rate cost are continuously incurred and depend on
 current inventory level and drift rate respectively, and the impulse control cost is incurred when the inventory is increased/decreased and depends on the increment/decrement.
The objective is to find a control policy $(\bm{\mu},\bm{Y})$ to minimize the average expected total costs over an infinite planning horizon.

The joint drift rate control and impulse control model described above has many applications of practical interest.
The following are two examples.
\begin{enumerate}
\item[(i)] \textit{Joint pricing and inventory control problems}.
For example, let $-\mu_t$ denote the demand rate at time $t$ and depend on the current product price $p_t$, i.e.,
\[
\mu_t=\mu(p_t),
\]
and let $\sigma^2$ denote the demand variance. Thus, the cumulative demand up to time $t$ is given by $-\int_0^t \mu(p_s)\,\mathrm{d}s-\sigma B_t$.
In such problems, the manager controls the drift rate by adjusting the offered price, while he controls the inventory level by ordering and returning products.
The joint pricing and inventory control problems with Brownian motion demand have been studied in the literature, e.g., \cite{ChenWuYao2010,Yao2017,ZhangZhang2012}. However, only upward impulse control (i.e., ordering)  is
allowed in these works.

\item[(ii)] \textit{Production-inventory problems}.
For example, let $\delta t-\sigma B_t$ denote the cumulative customer demand up to time $t$.
The system produces products at rate $\lambda_t$ for time $t$. Besides this standard production,
it also can place an expedited order to an outside supplier when the inventory level becomes too low and can return some products when the level becomes too high.
In this kind of production-inventory problem, the drift rate $\mu_t$ in \eqref{eq:X-1} becomes
\[
\mu_t=\lambda_t-\delta.
\]
Production-inventory problems also have been considered in the literature; see e.g., \cite{Bradley2004,WuChao2014} for dual production rate models.
\end{enumerate}

Because of their importance in practice, both drift rate control and impulse control have been widely studied in the literature.
Next, we first briefly review the literature on drift rate control, then review the literature on two-sided impulse control.
Finally, we introduce the our work with joint drift rate control and impulse control.
See Table \ref{tab:literature} for the classification of the related literature.

\begin{table}[h]
\centering
\resizebox{\textwidth}{!}{
\begin{tabular}{|c|c|c|c|c|}
\hline
\multicolumn{2}{|c|}{\multirow{2}{*}{}} &  \multicolumn{3}{|c|}{Impulse control} \\
\cline{3-5}
\multicolumn{2}{|c|}{} & None &  One-sided & Two-sided  \\
\hline
\multirow{4}{*}{Drift rate control} & None &  NA & --- & \cite{Constantinides1976,ConstantinidesRichard1978,DaiYao2013a,DaiYao2013b,HarrisonSellkeTaksar1983,OrmeciDaiVandeVate2008,Richard1977}\\
\cline{2-5}
 & Two modes & \cite{AvramKaraesmen1996,Doshi1978,Rath1977,WuChao2014} &  --- & None\\
\cline{2-5}
 & Finite modes & \cite{ChernoffPetkau1978,OrmeciVandeVate2011,OrmeciVandeVateWang2015,SongYinZhu2012}&  --- & None \\
\cline{2-5}
 & Any value & \cite{AtaHarrisonShepp2005,GhoshWeerasinghe2007}  &  \cite{ChenWuYao2010,Yao2017,ZhangZhang2012}& our paper \\
\hline
\end{tabular}
}
\caption{Literature review for drift rate control and impulse control\protect\footnotemark}
\label{tab:literature}
\end{table}
\footnotetext{In this table,  ``---" denotes that the references are not related directly to our work and thus are not included.}

In the literature on drift rate control, two-mode models with positive switching costs have been widely studied;
see e.g., \cite{AvramKaraesmen1996,Doshi1978,Rath1977,WuChao2014}.
These four papers prove the optimality of an $(m,M)$ policy under different cost criteria.
Under an $(m,M)$ policy, the system uses the lower drift rate mode once the system's state reaches or exceeds $M$, uses the higher drift rate mode once the system's state drops to or below $m$,
and keeps the drift rate mode unchanged otherwise.
Furthermore, finite drift rate modes (i.e., more than two modes) are considered in \cite{ChernoffPetkau1978,OrmeciVandeVate2011,OrmeciVandeVateWang2015,SongYinZhu2012},
where simple Brownian motion models are considered in \cite{ChernoffPetkau1978,OrmeciVandeVate2011,OrmeciVandeVateWang2015}
while a general diffusion process model is studied in \cite{SongYinZhu2012}.
Note that all the works introduced above only consider the drift rate control in their models.
Recently, models in which the drift rate can take any value have been studied in \cite{AtaHarrisonShepp2005,GhoshWeerasinghe2007};
Ghosh and Weerasinghe \cite{GhoshWeerasinghe2007} study a joint drift rate control and singular control problem and explicitly solve it under a quadratic control cost structure.

Two-sided impulse control problems with constant drift rate also have been widely studied in the literature:
Long-run average cost models are studied in \cite{Constantinides1976,DaiYao2013a,OrmeciDaiVandeVate2008}
and discounted cost models are studied in \cite{ConstantinidesRichard1978,DaiYao2013b,HarrisonSellkeTaksar1983,Richard1977}.
These works all prove the optimality of a control band policy $(d,D,U,u)$,
under which the state is immediately increased to level $D$ once it drops to $d$
and is decreased to level $U$ once it goes up to $u$.
The method for proving the existence of optimal policy parameters in these works
is to obtain an explicit solution for a relative optimality equation represented by an ODE,
and then to find parameters to satisfy some free boundary conditions derived from the optimal impulse control.
However, this method cannot work for our problem, since it is difficult to obtain the explicit solution of the corresponding ODE due to the changeable drift rate.
To overcome this difficulty, we prove the existence of an optimal control policy by analyzing the ODE with
the associated free boundary conditions directly.

There are also some papers considering joint drift rate control and impulse control, all of which focus on the application to joint pricing and inventory control problems (\cite{ChenWuYao2010,Yao2017,ZhangZhang2012}).
There are two important differences between those papers and this one.
First, their models only allow increasing inventory in the impulse control, while we allow both increasing and decreasing inventory.
Second and more important, Chen et al. \cite{ChenWuYao2010} and Zhang and Zhang \cite{ZhangZhang2012} prove the existence of optimal parameters only when the price is constant in a certain inventory interval and when $\mu(\cdot)$ has a specific form, respectively.
Although Yao \cite{Yao2017} considers a general drift rate function like ours,
he assumes the existence of optimal policy parameters for the optimality equation.
We, however, completely prove the existence of optimal policy parameters by solving a free boundary problem, which is the main technical contribution in this paper.

This paper's contribution can be summarized as follows. First,
to the best of the authors' knowledge, this is the first study of a stochastic inventory problem with joint drift rate control and two-sided impulse control. Further, the optimal control policy is completely characterized.
Second, a novel method is provided to prove the existence and uniqueness of optimal policy parameters by solving a free boundary problem. This is the major technical contribution of this paper.
Indeed, proving the existence of optimal policy parameters for Brownian control problems is usually important and difficult, and has been the major technical contribution of many publications; see e.g., \cite{BenkheroufBensoussan2009,BensoussanLiuSethi2005,DaiYao2013a,DaiYao2013b,FengMuthuraman2010}.
However, in contrast with these works, this paper develops a very different method that does not require an explicit solution for the optimality equation. This methodology provides a more general roadmap to solve similar problems, especially when the explicit expression of the solution is unavailable or too complicated.
Third, unlike the simple monotonic optimal drift rate studied in the literature (see e.g., \cite{AtaHarrisonShepp2005,GhoshWeerasinghe2007}),
we find that the optimal drift rate function $\mu^{\star}(x)$ as a function of inventory level $x$,
is first increasing and then decreasing as the inventory level $x$ increases in $[0,S^{\star}]$.

The rest of this paper is organized as follows.
In \S \ref{sec:model},
we introduce the mathematical formulation of the joint drift rate control and impulse control problem in \S \ref{sec:model-sub}, and state our main results in \S \ref{sec:mainresults-sub}.
In \S \ref{sec:existence}, a policy is provided by proving the existence of its parameters, and
\S \ref{sec:optimality} proves the optimality of proposed policy.
Finally, the paper concludes in \S \ref{sec:concluding}. We close this section with some frequently used notation. Let $x^+=\max\{0,x\}$, $\mathbb{R}^+=[0,\infty)$, and $\mathscr{C}^1(\mathbb{R}^+)$
be the space of continuous functions on $\mathbb{R}^+$ that have continuous first derivatives.
Let $f$ be a real-valued function defined on $\mathbb{R}^+$, and use $f_{t-}$ to denote the left limit at time point $t$.

\section{Formulation and main results}
\label{sec:model}
\subsection{Model formulation}
\label{sec:model-sub}

Let $(\Omega,\{\mathcal{F}_t\},\mathcal{F},\mathbb{P})$ be a filtered probability space
and Brownian motion $B=\{B_t:t\geq0\}$ is adapted with respect to the filtration $\{\mathcal{F}_t\}$.

Consider an inventory system with a \emph{netput} inventory level process given by \eqref{eq:W}.
There are two controls for this system: a drift rate control $\bm{\mu}=\{\mu_t:t\geq0\}$ and a two-sided impulse control $\bm{Y}=(Y_1,Y_2)$ with $Y_i=\{(\tau_{n}^i,\xi_{n}^i):n\geq 1\}$, $i=1,2$. These two controls together form
a policy $\phi=(\bm{\mu},\bm{Y})$.
A joint drift rate control and impulse control policy $\phi=(\bm{\mu},\bm{Y})$ is \emph{admissible} if: i) $\mu_t$ is $\mathcal{F}_t$-measurable and $\mu_t$ must be in a compact set $\mathcal{U}$ with the smallest element $\underline{\mu}$ and the largest element $\bar{\mu}$; and ii)  $\tau_n^i$ is a stopping time and $\xi_n^i$ is $\mathcal{F}_{\tau_n^i-}$-measurable. We note that $\mathcal{U}$ might be a discrete point set or an interval and $\underline{\mu}$ and $\bar{\mu}$ are both finite.
Let $\Phi$ denote the set of all admissible policies.
Under an admissible policy $\phi$, the controlled inventory level process $X$ must be nonnegative and is given by \eqref{eq:X-1}.

We next introduce three costs in our system.
The holding cost is continuously charged at rate $h(x)$ when the inventory level is $x$
and the drift rate control cost is continuously charged at rate $c(\mu)$ when the rate is $\mu$.
Furthermore, the impulse control cost is measured by the amount of adjustment, and cost $K+k\xi$ is incurred when quantity $\xi$ is increased while cost $L+\ell \xi$
is incurred when quantity $\xi$ is decreased, where $K$, $k$, $L$, and $\ell$ are all strictly positive. Therefore, under an admissible policy $\phi=(\bm{\mu},\bm{Y})$, the system's long-run average cost is
\begin{align*}
\mathcal{C}(x_0,\phi)
=\limsup_{t\to\infty}\frac{1}{t} \mathbb{E}_{x_0}\Big[\int_0^t \big(h(X_s)+c(\mu_s)\big)\, \mathrm{d}s
+\sum_{n=1}^{N_t^1} \big(K+k\xi_n^1\big)+\sum_{n=1}^{N_t^2} \big(L+\ell\xi_n^2\big)\Big],
\end{align*}
where $X_{0-}=x_0\in\mathbb{R}^+$ is the initial inventory level and $\mathbb{E}_{x_0}[\cdot]$ denotes the expectation with respect to the initial inventory level $x_0$.
Our objective is to find an admissible policy $\phi^{\star}=(\bm{\mu}^{\star},\bm{Y}^{\star})$
such that for any $x_0\in\mathbb{R}^+$,
\begin{equation}
\label{eq:problem}
\mathcal{C}(x_0,\phi^{\star})=\inf_{\phi\in\Phi}\mathcal{C}(x_0,\phi).
\end{equation}


To this end, we use the following assumptions about the holding cost function $h$.
\begin{assumption}
\label{assum-h}
$h(x)$ is strictly increasing and continuous in $x$ with $x\geq 0$ and $h(0)=0$. Moreover, $\lim_{x\to\infty}h(x)=\infty$.
\end{assumption}

The assumptions for the function $h$ are quite standard and are satisfied in most applications of practical interest; see e.g., a linear cost function $h(x)=hx$ in \cite{HarrisonSellkeTaksar1983,OrmeciDaiVandeVate2008} and a convex function in \cite{DaiYao2013a}.
Our assumptions are used to ensure the existence and uniqueness of optimal policy parameters, i.e., Theorem \ref{thm:existence} and Corollary \ref{cor:unique}; see e.g., the proof of Lemma \ref{lem:w-existence}-\ref{lem:w-property}.
It is worth noting that the ``strictly increasing'' may be relaxed to ``weakly increasing''
without jeopardizing the existence but losing the uniqueness of  optimal policy parameters.
Of course, this relaxation would require a tedious analysis.
Also, the condition $h(0)=0$ can be relaxed to $h(0)=a$ for some $a\geq0$ without jeopardizing the main results.

Notice that no condition is imposed on the drift rate function $c$.
In this paper, we will see that the function $c$ is used only in Lemma \ref{lem:property_pi_mu} and the proof of Lemma~\ref{lem:gamma2} \eqref{item-b-gamma2},
which require no conditions on $c(\mu)$ since the allowable drift rate set $\mathcal{U}$ is assumed to be a compact set.
However, if $\mathcal{U}$ is no longer a compact set, e.g., $\mathcal{U}=(-\infty,\infty)$, then we should impose some regular conditions on $c$
such as differentiability, convexity, $c'(-\infty)=-\infty$, and $c'(\infty)=\infty$, to guarantee the correctness of the main results.  Moreover, our analysis relies on the finiteness of $\underline{\mu}$ and $\bar{\mu}$. If either of them is infinite, some analysis must be changed accordingly and a more lengthy analysis is required.

\begin{remark}
Notice that in our model, we assume that the drift rate can be controlled but the variance is a constant.
There are two reasons for this assumption: First, because of the technical difficulties for the continuous adjustment when the variance is affected by the control,  it is a common assumption in the literature; see e.g.,
\cite{AtaHarrisonShepp2005,AvramKaraesmen1996,GhoshWeerasinghe2007,OrmeciVandeVate2011,OrmeciVandeVateWang2015}. Second, this assumption is reasonable for many situations in practice.
For example, the uncertainty in demand of branded products (e.g., Intel processors) that exhibit substantial customer demand is mainly due to a random error that is independent of the decision variable (\cite{LiZheng2006}).
\end{remark}

\subsection{Main results}
\label{sec:mainresults-sub}

In this subsection, the main results are presented. In Theorem \ref{thm:existence}, we determine a $\phi^{\star}=\{(0,q^{\star},Q^{\star},S^{\star}), \{\mu^{\star}(x):x\in[0,S^{\star}]\}\}$ policy by solving an ODE with some boundary conditions. Then, in Theorem \ref{thm:mainresults}, we show that
this policy is optimal for the joint drift rate control and impulse control problem \eqref{eq:problem}.

For a $\phi=\{(0,q,Q,S), \{\mu(x):x\in[0,S]\}\}$ policy, $(0,q,Q,S)$ with $0<q<Q<S$ denotes a two-sided impulse control policy and $\{\mu(x):x\in[0,S]\}$ denotes a drift rate control policy.
Under an impulse control policy $(0,q,Q,S)$, the inventory level is increased up to level $q$ instantaneously once it drops to level 0 and is decreased down to $Q$ instantaneously once it rises to level $S$.
Thus, $Y_1$ can be specified as
\begin{align*}
 \tau_n^1=
 \begin{cases}
 \inf\{t\geq0: X_{t-}= 0\} & \text{if $n=1$,}\\
 \inf\{t>\tau_{n-1}^1:X_{t-}= 0\} & \text{if $n\geq 2$,}
 \end{cases}
 \quad \text{and}
 \quad
 \xi_n^1=q \text{ for $n\geq1$.}
\end{align*}
Since the initial inventory level $x_0$ may be higher than $S$, a return with amount $x_0-Q$ may happen at time 0.
Therefore, $Y_2$ can be specified as
\begin{align*}
 \tau_n^2=
 \begin{cases}
 \inf\{t\geq0: X_{t-}\geq S\} & \text{if $n=1$,}\\
 \inf\{t>\tau_{n-1}^2:X_{t-}= S\} & \text{if $n\geq 2,$}
 \end{cases}
 \text{ and }
 \xi_n^2=
 \begin{cases}
 \max\{S, x_0\}-Q& \text{if $n=1$,}\\
 S-Q  & \text{if $n\geq2$}.
 \end{cases}
\end{align*}
The impulse control policy $(0,q,Q,S)$ is called a \emph{control band policy} in the literature
(see e.g., \cite{HarrisonSellkeTaksar1983,OrmeciDaiVandeVate2008}).
Note that under control band policy $(0,q,Q,S)$, the controlled inventory level $X_t$ is limited to $[0,S]$ for all $t\geq0$.
Under a drift rate control policy $\{\mu(x):x\in[0,S]\}$, the drift rate would be $\mu(x)$ when the inventory level is $X_{t}=x\in[0,S]$ at any time $t\geq0$.

In developing the following theorem, we will determine a $\phi=\{(0,q,Q,S), \{\mu(x):x\in[0,S]\}\}$ policy.
To present the theorem, we first define some functions as follows.
Let $w$ be a real-valued function on $\mathbb{R}$, and define
\begin{equation}
\label{eq:pi}
\pi(w)=\min_{\mu\in\mathcal{U}}\big(\mu w+c(\mu)\big)\quad
\text{and}\quad \mu(w)=\argmin_{\mu\in\mathcal{U}}\big(\mu w+c(\mu)\big),
\end{equation}
where we choose $\mu(w)$ to be the smallest one if there are multiple minimizers.
\begin{theorem}
\label{thm:existence}
Assume that $h(\cdot)$ satisfies Assumption \ref{assum-h}.
\begin{enumerate}[$(a)$]
\item
\label{item-thm-existence-a}
There exist four parameters $q^{\star}$, $Q^{\star}$, $S^{\star}$ and $\gamma^{\star}$ with
$0<q^{\star}<Q^{\star}<S^{\star}$ and $\gamma^{\star}\in\mathbb{R}$, and a continuously differentiable function
$w^{\star}(\cdot):\mathbb{R}^+\to\mathbb{R}$ satisfying
\begin{align}
\label{eq:ODE}
\frac{1}{2}\sigma^2 \frac{ \mathrm{d} w^{\star}(x)}{ \mathrm{d} x}
+\pi(w^{\star}(x))+h(x)=\gamma^{\star}, \quad \text{for $x\in[0,S^{\star}]$}
\end{align}
with boundary conditions
\begin{align}
&\int_0^{q^{\star}}\big[w^{\star}(x)+k\big]\, \mathrm{d}x=-K,\label{eq:int=-K}\\
&\int_{Q^{\star}}^{S^{\star}}\big[w^{\star}(x)-\ell\big]\, \mathrm{d}x=L.\label{eq:int=L}\\
&w^{\star}(q^{\star})=-k,\label{eq:w(q)=-k}\\
&w^{\star}(Q^{\star})=w(S^{\star})=\ell.\label{eq:w(Q)=w(S)=l}
\end{align}
\item
\label{item-thm-existence-b}
Define $\mu^{\star}(x)\triangleq\mu(w^{\star}(x))$. Then $\phi^{\star}=\{(0,q^{\star},Q^{\star},S^{\star}), \{\mu^{\star}(x):x\in[0,S^{\star}]\}\}$ is an admissible policy. Furthermore, there exists a number $x^{\star}$ with $x^{\star}\in(Q^{\star},S^{\star})$ such that
$\mu^{\star}(x)$ is decreasing in $x\in [0,x^{\star}]$ and increasing in $x\in [x^{\star},\infty)$.
\end{enumerate}
\end{theorem}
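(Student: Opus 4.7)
The plan is to establish part (a) by a shooting argument in the parameter $\gamma$, pinning down the four unknowns $(q^\star,Q^\star,S^\star,\gamma^\star)$ via monotonicity and an intermediate value argument, and to derive part (b) from the concavity of $\pi$ together with the geometry forced on $w^\star$ by the boundary data. Two structural facts about $\pi$ drive the analysis: as an infimum of affine functions $\mu w+c(\mu)$ over the compact set $\mathcal{U}$, $\pi$ is concave and globally Lipschitz on $\mathbb{R}$, with almost-everywhere derivative equal to the minimizer $\mu(w)$, which is non-increasing in $w$. Hence the right-hand side of the ODE, $(2/\sigma^2)[\gamma-\pi(w)-h(x)]$, is Lipschitz in $w$ uniformly on bounded $x$-intervals, so initial value solutions exist, are unique, and depend continuously on both the anchor data and $\gamma$.

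For existence in (a), I parameterize candidate solutions by $\gamma\in\mathbb{R}$. For fixed $\gamma$, the boundary condition $w(q)=-k$ together with the integral constraint $\int_0^{q}[w(x)+k]\,dx=-K$ picks out a unique pair $(q(\gamma),w_\gamma|_{[0,q(\gamma)]})$; existence and continuous dependence on $\gamma$ follow from the continuous-dependence theory for ODEs and the coercivity of $h$ in Assumption \ref{assum-h}, which prevents solutions from escaping to infinity. Continuing the solution forward past $q(\gamma)$, the critical-point analysis below shows that $w_\gamma$ first rises strictly above $\ell$ and then falls back; let $Q(\gamma)$ be the first crossing of the level $\ell$ and $S(\gamma)>Q(\gamma)$ the second. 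All that remains is the final boundary condition, which I recast as $F(\gamma)=\int_{Q(\gamma)}^{S(\gamma)}[w_\gamma(x)-\ell]\,dx-L=0$. A comparison argument should show that $F$ is continuous and strictly monotone in $\gamma$, with $F(\gamma)\to-\infty$ for small $\gamma$ and $F(\gamma)\to+\infty$ for large $\gamma$, so the intermediate value theorem yields a unique $\gamma^\star$, which in turn determines $q^\star=q(\gamma^\star)$, $Q^\star=Q(\gamma^\star)$, $S^\star=S(\gamma^\star)$, and $w^\star=w_{\gamma^\star}$.

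For part (b), the key observation is that every interior critical point of $w^\star$ is a strict local maximum. Indeed, if $w^{\star\prime}(x_0)=0$, the ODE gives $\gamma^\star=\pi(w^\star(x_0))+h(x_0)$; for $x$ slightly larger than $x_0$ the quantity $\gamma^\star-\pi(w^\star(x))-h(x)$ is strictly negative because $\pi\circ w^\star$ is continuous while $h$ is strictly increasing, and symmetrically it is strictly positive for $x$ slightly smaller. Thus $w^\star$ strictly increases up to $x_0$ and strictly decreases past it, so $w^\star$ has at most one interior critical point. Since $w^\star(Q^\star)=w^\star(S^\star)=\ell$ and $\int_{Q^\star}^{S^\star}[w^\star(x)-\ell]\,dx=L>0$, by continuity $w^\star$ must attain a maximum at some $x^\star\in(Q^\star,S^\star)$, which is then the unique interior critical point. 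Hence $w^\star$ is strictly increasing on $[0,x^\star]$ and strictly decreasing on $[x^\star,S^\star]$ (and keeps decreasing on any ODE extension past $S^\star$). Since $\mu(w)$ is non-increasing in $w$, $\mu^\star(x)=\mu(w^\star(x))$ is strictly decreasing on $[0,x^\star]$ and strictly increasing on $[x^\star,\infty)$, and admissibility of $\phi^\star$ follows from the continuity of $w^\star$ and the compactness of $\mathcal{U}$.

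The main obstacle is the monotonicity and limit behaviour of $F(\gamma)$, which must be obtained without an explicit formula for $w^\star$. The difficulty is that $\gamma$ enters $F$ through three nested pieces---the value of $q(\gamma)$, the evolution of $w_\gamma$ across both segments, and the free crossings $Q(\gamma)$ and $S(\gamma)$---so monotonicity is not a one-line comparison. My approach would be a careful ODE comparison: for $\gamma_1<\gamma_2$ I would first establish a pointwise ordering of the corresponding shooting solutions sharing the anchor $w=-k$, then propagate it through the integral conditions while tracking how the free boundaries move with $\gamma$. Executing this rigorously is precisely the free boundary analysis that the authors highlight as the central technical contribution of the paper.
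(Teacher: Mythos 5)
There is a genuine gap: the step you yourself label ``the main obstacle'' --- continuity, strict monotonicity, and the limits $F(\gamma)\to\pm\infty$ --- is the actual content of the theorem, and your proposal defers it rather than proving it. Worse, the monotonicity claim is doubtful in your parameterization. You shoot in $\gamma$ alone, with the initial value $w_0=w(0)$ implicitly determined by the two lower conditions $w(q)=-k$ and $\int_0^q[w+k]\,\mathrm{d}x=-K$. But $w(x;w_0,\gamma)$ is strictly increasing in \emph{both} $\gamma$ and $w_0$ (Lemma~\ref{lem:w-property-1}), so raising $\gamma$ forces the implicit $w_0(\gamma)$ to decrease in order to preserve the integral constraint; the two effects on the continued solution past $q(\gamma)$ push in opposite directions, and no one-line comparison gives monotonicity of $F$. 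The paper avoids exactly this by keeping $w_0$ as the free parameter and building two maps $\gamma_1^{\star}(w_0)$ (from the upper conditions, Lemma~\ref{lem:gamma1}) and $\gamma_2^{\star}(w_0)$ (from the lower ones, Lemma~\ref{lem:gamma2}), each obtained by a genuinely monotone one-dimensional search, and then locating a crossing $\gamma_1^{\star}(w_0^{\star})=\gamma_2^{\star}(w_0^{\star})$ via sign changes at $w_0=\bar w_0$ and as $w_0\to-\infty$ (Lemma~\ref{lem:gamma1=gamma2}). Notably, even the paper cannot prove uniqueness of this crossing directly: uniqueness of $\gamma^{\star}$ is deduced only a posteriori from the verification theorem (Remark~\ref{rem:uniqueness}, Corollary~\ref{cor:unique}). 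So your claim that strict monotonicity of $F$ plus the intermediate value theorem ``yields a unique $\gamma^{\star}$'' is almost certainly not obtainable along the route you describe.

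Several further ingredients are asserted without proof. Before $F$ can even be defined you need (i) existence and uniqueness, for each relevant $\gamma$, of a $w_0$ solving the two lower conditions --- itself a monotonicity-and-limits argument occupying Lemma~\ref{lem:gamma2} and Appendix~\ref{app:auxiliary}; and (ii) the trichotomy of shapes of $w(\cdot;w_0,\gamma)$ (strictly decreasing, unimodal, or strictly increasing, Lemma~\ref{lem:w-property}), without which ``$w_\gamma$ first rises strictly above $\ell$ and then falls back'' is unjustified: for $\gamma\geq\bar\gamma(w_0)$ the solution increases to $+\infty$ and $S(\gamma)$ does not exist, which is why the paper works with $f_1(w_0,\gamma)=\int_0^\infty(w-\ell)^+\,\mathrm{d}x$ and allows it to be infinite. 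Finally, your local argument in part (b) that every critical point is a strict local maximum is not airtight: continuity of $\pi\circ w^{\star}$ together with strict monotonicity of $h$ does not fix the sign of $\gamma^{\star}-\pi(w^{\star}(x))-h(x)$ just past $x_0$, since both increments can be $o(x-x_0)$ with no control on their ratio. The correct (non-local) argument is the paper's observation that no $x_1<x_2$ can satisfy $w(x_1)=w(x_2)$ with $w'(x_1)\leq 0\leq w'(x_2)$, which rules out interior minima and flat pieces; your conclusion for part (b) is right, but it needs that comparison, not the local expansion you give.
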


In this paper, \eqref{eq:int=-K}-\eqref{eq:w(Q)=w(S)=l} are called free boundary conditions since the boundary points $q^{\star}$, $Q^{\star}$, and $S^{\star}$ need to be determined, and problem \eqref{eq:ODE} with conditions \eqref{eq:int=-K}-\eqref{eq:w(Q)=w(S)=l} is also called free boundary problem; see a similar definition in \cite{DaiYao2013a}.
The optimality of the selected policy $\phi^{\star}=\{(0,q^{\star},Q^{\star},S^{\star}), \{\mu^{\star}(x):x\in[0,S^{\star}]\}\}$ is shown in the following theorem.

\begin{theorem}
\label{thm:mainresults}
Assume that $h(\cdot)$ satisfies Assumption \ref{assum-h}.
Then, the policy $\phi^{\star}=\{(0,q^{\star},Q^{\star},S^{\star}), \{\mu^{\star}(x):x\in[0,S^{\star}]\}\}$ with parameters defined in
Theorem \ref{thm:existence} is an optimal policy among all admissible policies, and $\gamma^{\star}$ is the optimal long-run average cost.
\end{theorem}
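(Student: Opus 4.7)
The plan is to use a standard verification (lower-bound) argument built from $w^{\star}$ and $\gamma^{\star}$ of Theorem \ref{thm:existence} to construct a relative value function and match it against any admissible policy. I would define $V^{\star}:\mathbb{R}^+\to\mathbb{R}$ by $V^{\star}(x)=\int_0^x w^{\star}(y)\,\mathrm{d}y$ on $[0,S^{\star}]$ and extend linearly as $V^{\star}(x)=V^{\star}(S^{\star})+\ell(x-S^{\star})$ for $x>S^{\star}$. The boundary condition $w^{\star}(S^{\star})=\ell$ in \eqref{eq:w(Q)=w(S)=l} makes $V^{\star}\in\mathscr{C}^1(\mathbb{R}^+)$, twice continuously differentiable on $[0,S^{\star}]$, with one-sided second derivative $0$ past $S^{\star}$.

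The next step is to verify a quasi-variational inequality (QVI) for $V^{\star}$. Rewriting \eqref{eq:ODE} and using the definition of $\pi$ in \eqref{eq:pi} gives the Hamilton--Jacobi--Bellman inequality
\begin{equation*}
\tfrac{1}{2}\sigma^2 (V^{\star})''(x)+\mu\,(V^{\star})'(x)+c(\mu)+h(x)\geq \gamma^{\star},\qquad x\in[0,S^{\star}],\ \mu\in\mathcal{U},
\end{equation*}
with equality at $\mu=\mu^{\star}(x)$; for $x>S^{\star}$ the analogous inequality holds trivially because $(V^{\star})''=0$, $(V^{\star})'=\ell$, and $h$ is increasing. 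The two impulse inequalities
\begin{equation*}
V^{\star}(x+\xi)-V^{\star}(x)\geq -(K+k\xi)\quad\text{and}\quad V^{\star}(x-\xi)-V^{\star}(x)\geq -(L+\ell\xi)
\end{equation*}
reduce via the fundamental theorem of calculus to the bounds $\int_x^{x+\xi}(w^{\star}(y)+k)\,\mathrm{d}y\geq -K$ and $\int_{x-\xi}^x(w^{\star}(y)-\ell)\,\mathrm{d}y\leq L$, with equality precisely when the arguments run over $[0,q^{\star}]$ and $[Q^{\star},S^{\star}]$ as in \eqref{eq:int=-K}--\eqref{eq:int=L}. These reduce in turn to the sign pattern $w^{\star}+k\leq 0$ on $[0,q^{\star}]$ and $\geq 0$ thereafter, together with $w^{\star}-\ell\leq 0$ on $[0,Q^{\star}]$ and $\geq 0$ thereafter, which is the structural information produced while proving existence in \S \ref{sec:existence}.

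The core computation is then to apply It\^o's formula to $V^{\star}(X_t)$ under an arbitrary admissible $\phi=(\bm{\mu},\bm{Y})$; since $V^{\star}\in\mathscr{C}^1(\mathbb{R}^+)$ with absolutely continuous derivative, the formula is valid and impulses contribute telescoping jump terms. Using the HJB inequality to lower-bound the drift and diffusion contribution and the two impulse inequalities to lower-bound each jump, taking expectations (the stochastic integral is a local martingale handled by standard stopping-time localization), dividing by $t$ and sending $t\to\infty$ yields
\begin{equation*}
\gamma^{\star}\leq \mathcal{C}(x_0,\phi)+\limsup_{t\to\infty}\tfrac{1}{t}\bigl(\mathbb{E}_{x_0}[V^{\star}(X_t)]-V^{\star}(x_0)\bigr).
\end{equation*}
Because $V^{\star}$ has at most linear growth, and Assumption \ref{assum-h} together with $\mathcal{C}(x_0,\phi)<\infty$ forces $\mathbb{E}_{x_0}[X_t]/t\to 0$ (since $h(x)\to\infty$ penalizes sustained excursions to large inventories), the limsup is nonpositive, so $\gamma^{\star}\leq\mathcal{C}(x_0,\phi)$. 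Under $\phi^{\star}$ itself, the controlled state is confined to $[0,S^{\star}]$ after time $0$, so $V^{\star}(X_t)$ is bounded, and each QVI inequality is tight along the trajectory by construction, giving $\mathcal{C}(x_0,\phi^{\star})=\gamma^{\star}$.

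The principal difficulty is the global verification of the two impulse inequalities. Since $w^{\star}$ is defined only implicitly through the free-boundary problem of Theorem \ref{thm:existence}, securing the correct sign pattern of $w^{\star}+k$ and $w^{\star}-\ell$ on all of $\mathbb{R}^+$ --- not merely at the boundary points where \eqref{eq:w(q)=-k} and \eqref{eq:w(Q)=w(S)=l} force equality --- must rest on the detailed monotonicity analysis of $w^{\star}$ produced during the existence proof. A secondary but routine technical point is the vanishing of $\mathbb{E}_{x_0}[V^{\star}(X_t)]/t$ in the limsup, which I would justify via a stopping-time truncation $\tau_R=\inf\{t:X_t\geq R\}$ together with $h(R)\to\infty$ to control the tail.
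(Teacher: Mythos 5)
Your proposal is, in substance, the paper's own proof. The candidate function $V^{\star}$ is exactly the paper's $f^{\star}$ (the integral of $w^{\star}$ on $[0,S^{\star}]$, extended linearly with slope $\ell$), the pointwise HJB inequality comes from \eqref{eq:ODE} and the definition of $\pi$ exactly as in the verification of \eqref{equ:drift}, the two intervention inequalities are reduced to the sign pattern of $w^{\star}+k$ and $w^{\star}-\ell$ combined with \eqref{eq:int=-K}--\eqref{eq:int=L} exactly as in \eqref{eq:f>0} and \eqref{eq:f>0-2} (and, as you correctly flag, this rests on the unimodality of $w^{\star}$ established in Theorem \ref{thm:existence}), and your achievability argument for $\phi^{\star}$ is precisely Proposition \ref{prop:cost}.

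The one step where you depart from the paper is the disposal of the error term $\limsup_{t\to\infty}\mathbb{E}_{x_0}[V^{\star}(X_t)]/t$, and there your supporting claim has a gap. You assert that Assumption \ref{assum-h} together with $\mathcal{C}(x_0,\phi)<\infty$ forces $\mathbb{E}_{x_0}[X_t]/t\to 0$. Finiteness of the long-run average cost controls the time average of $h(X_s)$ and the cumulative proportional ordering cost $k\sum_n\xi_n^1$; since $X_t\leq x_0+\bar{\mu}t+\sigma B_t+\sum_{n\leq N_t^1}\xi_n^1$, this yields only $\limsup_{t\to\infty}\mathbb{E}_{x_0}[X_t]/t\leq \bar{\mu}+\mathcal{C}(x_0,\phi)/k<\infty$, not zero, and the time-averaged holding cost constrains the occupation measure of $X$, not its value at the terminal instant $t$. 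The paper sidesteps this entirely: the only condition Proposition \ref{prop:lower_bound} imposes on the error term is that $f$ be bounded below, which $f^{\star}$ satisfies trivially because $(f^{\star})'=\ell>0$ beyond $S^{\star}$, so no upper bound on $\mathbb{E}_{x_0}[X_t]$ is ever required. If you keep your route, you must prove $\limsup_{t\to\infty}\mathbb{E}_{x_0}[V^{\star}(X_t)]/t\leq 0$ honestly --- for instance by showing, via $h(x)\to\infty$ and a localization at $\tau_R=\inf\{t:X_t\geq R\}$ as you suggest, that any policy for which $\mathbb{E}_{x_0}[X_t]$ grows linearly along a subsequence has infinite average cost --- or simply adopt the paper's bounded-below formulation of the lower-bound proposition.
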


Theorem \ref{thm:mainresults} has shown that $\gamma^{\star}$ is the optimal cost and thus it is unique in
Theorem \ref{thm:existence}. This in turn implies the uniqueness of optimal policy parameters $q^{\star}$, $Q^{\star}$, and $S^{\star}$ and the function $w^{\star}(\cdot)$ in Theorem \ref{thm:existence}.
\begin{corollary}
\label{cor:unique}
The optimal impulse control parameters  $q^{\star}$, $Q^{\star}$, and $S^{\star}$  and the function $w^{\star}(\cdot)$  in Theorem \ref{thm:existence} are unique.
\end{corollary}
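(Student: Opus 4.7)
The plan is to derive Corollary \ref{cor:unique} from the uniqueness of the optimal long-run average cost $\gamma^\star$ (guaranteed by Theorem \ref{thm:mainresults}) together with an ODE comparison argument applied to the free boundary problem of Theorem \ref{thm:existence}. Since Theorem \ref{thm:mainresults} identifies $\gamma^\star$ with $\inf_{\phi\in\Phi}\mathcal{C}(x_0,\phi)$, a single well-defined real number, every quintuple $(q,Q,S,\gamma,w)$ produced by Theorem \ref{thm:existence} must share the same value $\gamma=\gamma^\star$. The remaining task is therefore to show that, once $\gamma^\star$ is fixed, the data $(q, Q, S, w)$ are uniquely determined.

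For the comparison step, first observe that the function $\pi(\cdot)$ defined in \eqref{eq:pi} is the pointwise minimum of the affine family $\{w\mapsto \mu w+c(\mu)\}_{\mu\in\mathcal{U}}$ over a compact set, hence concave and globally Lipschitz with constant at most $\max\{|\underline{\mu}|,|\bar{\mu}|\}$. Rewriting \eqref{eq:ODE} as $w'(x)=\tfrac{2}{\sigma^2}\bigl(\gamma^\star-\pi(w(x))-h(x)\bigr)$, the right-hand side is Lipschitz in $w$ and continuous in $x$, so by Picard--Lindel\"of any two solutions that agree at a single point coincide throughout their common domain. Suppose now that $(q_i,Q_i,S_i,\gamma^\star,w_i)$, $i=1,2$, both satisfy \eqref{eq:ODE}-\eqref{eq:w(Q)=w(S)=l}. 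Then either $w_1\equiv w_2$ on $[0,\min(S_1,S_2)]$, or else $w_1-w_2$ has a strict constant sign throughout this interval.

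Without loss of generality assume $w_1>w_2$ on the overlap. Since the shape analysis underlying Theorem \ref{thm:existence}(b) shows that any solution of the free boundary problem is strictly increasing through its passage of the level $-k$, the inequality $w_1(q_2)>w_2(q_2)=-k$ forces $q_1<q_2$; moreover $w_i(x)+k\le 0$ on $[0,q_i]$. Splitting the integral condition \eqref{eq:int=-K} for $w_2$ at $q_1$ then yields
\[
-K=\int_0^{q_2}\!\bigl(w_2(x)+k\bigr)\mathrm{d}x=\int_0^{q_1}\!\bigl(w_2(x)+k\bigr)\mathrm{d}x+\int_{q_1}^{q_2}\!\bigl(w_2(x)+k\bigr)\mathrm{d}x<\int_0^{q_1}\!\bigl(w_1(x)+k\bigr)\mathrm{d}x=-K,
\]
where the second summand is nonpositive and the first summand is strictly less than $\int_0^{q_1}(w_1+k)\,\mathrm{d}x$ because $w_2<w_1$ on $[0,q_1]$ with positive Lebesgue measure. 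The strict inequality $-K<-K$ is absurd, so $w_1>w_2$ is impossible; symmetric reasoning excludes $w_1<w_2$, leaving $w_1\equiv w_2$. The remaining conditions \eqref{eq:w(q)=-k}-\eqref{eq:w(Q)=w(S)=l}, combined with the unimodal shape of the common $w$, then force $q_1=q_2$, $Q_1=Q_2$, and $S_1=S_2$.

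The main obstacle is verifying that every candidate solution---not only the specific $w^\star$ constructed in Theorem \ref{thm:existence}---inherits the monotonicity structure near $q$, $Q$, and $S$ on which the comparison step relies. This structure is dictated by the sign of $\gamma^\star-\pi(w)-h(x)$ driving the ODE, so once $\gamma^\star$ is pinned down by Theorem \ref{thm:mainresults}, the same shape analysis used in the proof of Theorem \ref{thm:existence} applies verbatim to any competing solution, and the contradiction above then closes the argument.
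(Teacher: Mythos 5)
Your proof is correct, and its first (and crucial) step is identical to the paper's: both arguments pin down $\gamma^{\star}$ as the unique optimal average cost via Theorem \ref{thm:mainresults}, so the real content is showing that a fixed $\gamma^{\star}$ determines the rest of the data. Where you diverge is in that second step. The paper finishes in one line by recalling that $\gamma_1^{\star}(w_0)$ is strictly decreasing in $w_0$ (Lemma \ref{lem:gamma1}), which immediately forces $w_0^{\star}$ to be unique, after which $q^{\star},Q^{\star},S^{\star}$ and $w^{\star}$ follow from the constructions in Lemmas \ref{lem:gamma1}--\ref{lem:gamma2} and ODE uniqueness (Lemma \ref{lem:w-existence}). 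You instead run a direct comparison: two distinct solutions of \eqref{eq:ODE} with the same $\gamma^{\star}$ never cross (Picard--Lindel\"of forward and backward, equivalently Lemma \ref{lem:w-property-1}\eqref{item-d-lemma-w}), and the ordered pair violates the integral condition \eqref{eq:int=-K} at the lower boundary. This is in effect a re-derivation of the strict monotonicity of $f_2(\cdot,\gamma)$ in $w_0$, which the paper states but relegates to an omitted "similar" proof in Lemma \ref{lem:gamma2}; your route is therefore more self-contained at the lower boundary, at the cost of re-proving a monotonicity fact the paper already has in hand at the upper boundary. Your argument and inequality chain are sound: $q_1<q_2$ does follow from the non-crossing plus unimodality, the middle integral is nonpositive, and $q_1>0$ gives the strict inequality.

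One point you should make explicit rather than wave at: you must verify that \emph{any} competing solution of the free boundary problem is unimodal (case \eqref{solution_x_3} of Lemma \ref{lem:w-property}) so that $q$ lies on the increasing branch and $w+k\leq 0$ on $[0,q]$. This is a one-line observation — the conditions $w(Q)=w(S)=\ell$ with $Q<S$ and $\int_{Q}^{S}(w-\ell)\,\mathrm{d}x=L>0$ are incompatible with the strictly monotone cases \eqref{solution_x_1} and \eqref{solution_x_2} — but as written you only assert that the shape analysis "applies verbatim," which classifies solutions into three cases without selecting one. With that sentence added, the argument is complete.
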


Finally, we give a heuristic derivation of the ODE \eqref{eq:ODE} and free boundary conditions \eqref{eq:int=-K}-\eqref{eq:w(Q)=w(S)=l} that the optimal parameters should satisfy.
For a given policy $\phi=\{(0,q,Q,S),\{\mu(x): x\in[0,S]\}\}$,
let $V(x)$ be the difference of the expected cumulative cost from state $x\in\mathbb{R}^+$ to state $0$
and the cost $\gamma \tau(x,0)$, where $\gamma$ denotes the average cost under policy $\phi$ and $\tau(x,0)$ is the first time
when the inventory level hits $0$ starting from $x$. In the literature, $V$ is also called the relative value function; see e.g., \cite{OrmeciDaiVandeVate2008}. Let
\[
w^{\star}(x)=V'(x).
\]
First, the definition of $V$ implies that $V$ should satisfy $V(0)=V(q)+K+kq$ and $V(S)=V(Q)+L+\ell (S-Q)$,
which yield \eqref{eq:int=-K} and \eqref{eq:int=L}. Next, we show that $V$ should satisfy \eqref{eq:ODE}, \eqref{eq:w(q)=-k}, and \eqref{eq:w(Q)=w(S)=l}
if $\phi$ is optimal.
If $\phi$ is optimal, by the principle of optimality,
for $X_0=x\in(0,S)$ and a small time interval with length $\delta$,
$V(x)$ should satisfy
\[
V(x)=\min_{\mu_s\in\mathcal{U},s\in[0,\delta]}\mathbb{E}_{x}\Big[\int_0^{\delta}\big( h(X_s)+c(\mu_s)\big)\,\mathrm{d}s-\gamma \delta+V(X_{\delta})|X_0=x,\mu_0=\mu\Big]+o(\delta)
\]
with $X_{s}=x+\int_0^s \mu_\upsilon \,\mathrm{d}\upsilon+\sigma B_{s}$ for $s\in[0, \delta]$.
It follows from a standard argument for the dynamic programming equation (see e.g., \cite{FlemingSoner2006}) that $V(x)$ satisfies
\[
\frac{1}{2}\sigma^2 V''(x)+\min_{\mu\in\mathcal{U}}\big(\mu V'(x)+c(\mu)\big)+h(x)-\gamma=0,
\]
which implies \eqref{eq:ODE}.
Furthermore, starting from state $S$, if it is optimal to jump state $Q$,
then $Q$ should be chosen by minimizing $V(Q)+L+\ell(S-Q)$.
The first-order optimality condition would be $V'(Q)=\ell$, which is the first equality in \eqref{eq:w(Q)=w(S)=l}.
Besides this, for $x\geq S$, under the policy $\phi$, we must have $V(x)=V(Q)+L+\ell(x-Q)$.
By the principle of smoothness under the optimal policy, the left and right derivatives of $V$ at $S$
should be equal, i.e., $V'(S)=\ell$; that is the second equality in \eqref{eq:w(Q)=w(S)=l}.
Finally, a similar analysis gives us \eqref{eq:w(q)=-k}.

We will prove Theorem \ref{thm:existence} in \S\ref{sec:existence} and
Theorem \ref{thm:mainresults} and Corollary \ref{cor:unique} in \S\ref{sec:optimality}.

\section{Existence of optimal policy parameters}
\label{sec:existence}

In this section, we prove Theorem~\ref{thm:existence},
which shows the existence of a policy (which by Theorem \ref{thm:mainresults} is optimal) with parameters satisfying (\ref{eq:ODE})-(\ref{eq:w(Q)=w(S)=l}).
Specifically, we prove Theorem~\ref{thm:existence} in two subsections.
In \S\ref{sec:ode}, we solve the ODE \eqref{eq:ODE} with given $\gamma\in\mathbb{R}$
and a boundary condition $w(0)=w_0\in\mathbb{R}$, and provide the structural properties of $w$
with respect to $x$, $w_0$, and $\gamma$; see Lemmas \ref{lem:w-existence}-\ref{lem:w-property}.
Then, in \S\ref{sec:optimal_parameters}, we determine
$(w_0^{\star},\gamma^{\star},q^{\star},Q^{\star},S^{\star})$ by the five boundary conditions
\eqref{eq:int=-K}-\eqref{eq:w(Q)=w(S)=l}; see Lemmas \ref{lem:gamma1}-\ref{lem:gamma1=gamma2}.
Note that Lemmas \ref{lem:w-existence}-\ref{lem:gamma1=gamma2} are under Assumption \ref{assum-h}, although for brevity we will not state this for each.

Before proving Theorem~\ref{thm:existence}, recalling the definitions of $\pi(w)$ and $\mu(w)$ in \eqref{eq:pi}, we first give the following lemma, the proof of which can be found in Appendix \ref{app:proof-lemma-1}.

\begin{lemma}
\label{lem:property_pi_mu}
$\pi(w)$ is concave and Lipschitz continuous in $w\in\mathbb{R}$, i.e.,
for any $w_1$ and $w_2$, we have
\begin{equation}
\label{equ:Lipschitz_pi}
\abs{\pi(w_1)-\pi(w_2)}\leq M\abs{w_1-w_2},
\end{equation}
where $M=\max\{\abs{\underline{\mu}},\abs{\bar{\mu}}\}$.
Furthermore, $\mu(w)$ is decreasing in $w$.
\end{lemma}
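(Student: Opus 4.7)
The plan is to read off all three conclusions directly from the envelope structure of $\pi(w)=\min_{\mu\in\mathcal{U}}(\mu w+c(\mu))$, without ever needing structural assumptions on $c$: for each fixed $\mu\in\mathcal{U}$ the map $w\mapsto \mu w+c(\mu)$ is affine in $w$, and $\pi(w)$ is the lower envelope of this family of affine functions indexed by the compact set $\mathcal{U}$. The concavity of $\pi$ is then immediate, since the pointwise infimum of a family of affine (hence concave) functions is concave on $\mathbb{R}$.

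For the Lipschitz estimate, I will use a standard ``swap the minimizer'' argument. Fix $w_1,w_2\in\mathbb{R}$ and let $\mu_i^{\star}=\mu(w_i)\in\mathcal{U}$ for $i=1,2$ (these exist because $\mathcal{U}$ is compact and the minimum is attained in the definition of $\pi$). By optimality of $\mu_2^{\star}$ at $w_2$ we have $\pi(w_2)\leq \mu_1^{\star}w_2+c(\mu_1^{\star})$, so
\begin{equation*}
\pi(w_1)-\pi(w_2)\;\geq\;\mu_1^{\star}w_1+c(\mu_1^{\star})-\mu_1^{\star}w_2-c(\mu_1^{\star})\;=\;\mu_1^{\star}(w_1-w_2).
\end{equation*}
Reversing the roles of $w_1$ and $w_2$ yields $\pi(w_1)-\pi(w_2)\leq \mu_2^{\star}(w_1-w_2)$. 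Since both $\mu_1^{\star}$ and $\mu_2^{\star}$ lie in $[\underline{\mu},\bar{\mu}]$, their absolute values are bounded by $M=\max\{|\underline{\mu}|,|\bar{\mu}|\}$, which gives \eqref{equ:Lipschitz_pi}.

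For monotonicity of $\mu(w)$, the natural tool is the two-inequality argument that is standard for parametric minima. Take $w_1<w_2$ and write $\mu_i^{\star}=\mu(w_i)$. Optimality gives
\begin{equation*}
\mu_1^{\star}w_1+c(\mu_1^{\star})\leq \mu_2^{\star}w_1+c(\mu_2^{\star}),\qquad
\mu_2^{\star}w_2+c(\mu_2^{\star})\leq \mu_1^{\star}w_2+c(\mu_1^{\star}).
\end{equation*}
Adding the two inequalities, the $c$ terms cancel and I obtain $(\mu_1^{\star}-\mu_2^{\star})(w_1-w_2)\leq 0$, and since $w_1-w_2<0$ this forces $\mu_1^{\star}\geq \mu_2^{\star}$, i.e.\ $\mu(\cdot)$ is non-increasing. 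The only mild subtlety to dispose of is the tie-breaking convention (``choose the smallest minimizer''): when there are multiple argmins at some $w$, the above addition only shows $\mu(w_1)\geq \mu(w_2)$ as a \emph{weak} inequality between two specific choices, which is precisely what ``decreasing'' means here in the weak sense used throughout the paper, so no additional work is needed. I expect no substantive obstacle; the main thing to be careful about is keeping the ``swap'' inequalities in the right direction and recording that compactness of $\mathcal{U}$ (and attainment of the min in \eqref{eq:pi}) is what guarantees existence of $\mu_1^{\star},\mu_2^{\star}$ in the first place.
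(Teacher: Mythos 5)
Your proposal is correct and follows essentially the same route as the paper: concavity via the lower envelope of affine functions, and the summed optimality inequalities for monotonicity of $\mu(\cdot)$ are identical to the paper's argument. The only cosmetic difference is in the Lipschitz step, where you sandwich $\pi(w_1)-\pi(w_2)$ between $\mu_1^{\star}(w_1-w_2)$ and $\mu_2^{\star}(w_1-w_2)$ while the paper invokes $\abs{\min_{\mu}f(\mu)-\min_{\mu}g(\mu)}\leq\max_{\mu}\abs{f(\mu)-g(\mu)}$; these are interchangeable and yield the same constant $M$.
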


\subsection{Solving the ODE \eqref{eq:ODE}}
\label{sec:ode}

In this subsection, we will solve the ODE \eqref{eq:ODE} for $x\geq0$ by assigning an initial value $w(0)=w_0\in\mathbb{R}$ and fixing $\gamma\in\mathbb{R}$, which then allows us to characterize the structural and asymptotical properties of the solution $w(\cdot)$.

Consider the following problem:
\begin{align}
&\frac{1}{2}\sigma^2 w'(x)+\pi(w(x))+h(x)=\gamma \quad\text{for $x\geq 0$}, \label{equ:ode_new}\\
&\quad \text{subject to } w(0)=w_0.  \nonumber
\end{align}
We denote the solution of the above problem by  $w(x;w_0,\gamma)$ if it exists.
For this problem, we first have the following lemma, which states the existence, uniqueness, and continuity of $w(x;w_0,\gamma)$.
\begin{lemma}
\label{lem:w-existence}
\begin{enumerate}[$(a)$]
\item
\label{item-a-lemma-w}
For any $w_0\in \mathbb{R}$ and $\gamma\in \mathbb{R}$,
problem \eqref{equ:ode_new} has a unique continuously differentiable solution $w(x; w_0,\gamma)$.
\item
\label{item-b-lemma-w}
$w(x; w_0,\gamma)$ is continuous in $w_0\in\mathbb{R}$ and $\gamma\in\mathbb{R}$, and
$w'(x; w_0,\gamma)$ is continuous in $x\in\mathbb{R}^+$, $w_0\in\mathbb{R}$, and $\gamma\in\mathbb{R}$ respectively.
\end{enumerate}
\end{lemma}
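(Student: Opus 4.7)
The plan is to view \eqref{equ:ode_new} as an explicit first-order scalar ODE
\[
w'(x)=F(x,w;\gamma)\triangleq\frac{2}{\sigma^{2}}\bigl(\gamma-\pi(w)-h(x)\bigr),\qquad w(0)=w_{0},
\]
and to invoke the standard Picard--Lindelöf (Cauchy--Lipschitz) machinery, with the inputs supplied by Lemma~\ref{lem:property_pi_mu} and Assumption~\ref{assum-h}. Concretely, $h$ is continuous in $x$ by Assumption~\ref{assum-h}, and $\pi$ is Lipschitz in $w$ with constant $M$ by Lemma~\ref{lem:property_pi_mu}; hence $F$ is jointly continuous on $\mathbb{R}^{+}\times\mathbb{R}$ and satisfies the global-in-$w$ Lipschitz estimate
\[
\bigl|F(x,w_{1};\gamma)-F(x,w_{2};\gamma)\bigr|\leq \frac{2M}{\sigma^{2}}\,|w_{1}-w_{2}|
\]
uniformly in $x$. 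This is precisely the hypothesis needed for the Picard--Lindelöf theorem to yield a \emph{unique} local continuously differentiable solution $w(\cdot;w_{0},\gamma)$ on some interval $[0,\delta)$.

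The main obstacle is ruling out finite-time blow-up, since without that we would only have a local, rather than a global, solution on $\mathbb{R}^{+}$. For this I would use the linear growth bound that follows from Lipschitzness of $\pi$, namely $|\pi(w)|\leq |\pi(0)|+M|w|$, giving
\[
|w'(x)|\leq \frac{2}{\sigma^{2}}\bigl(|\gamma|+|\pi(0)|+|h(x)|\bigr)+\frac{2M}{\sigma^{2}}|w(x)|.
\]
On any finite interval $[0,T]$, $h$ is bounded (by continuity), so an application of Grönwall's inequality to $|w(x)|\leq |w_{0}|+\int_{0}^{x}|w'(s)|\,ds$ produces an a priori estimate of the form $|w(x)|\leq A_{T}e^{B_{T}x}$. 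This precludes explosion, so the local solution extends uniquely to all of $\mathbb{R}^{+}$, settling part \eqref{item-a-lemma-w}.

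For part \eqref{item-b-lemma-w}, continuity of $w(x;w_{0},\gamma)$ in the parameters $(w_{0},\gamma)$ would be obtained by the standard continuous-dependence argument: fix $T>0$ and two parameter pairs $(w_{0},\gamma),(\tilde w_{0},\tilde\gamma)$, write the difference of the corresponding integral equations, and bound it with the Lipschitz estimate on $\pi$ to get
\[
|w(x)-\tilde w(x)|\leq |w_{0}-\tilde w_{0}|+\frac{2T}{\sigma^{2}}|\gamma-\tilde\gamma|+\frac{2M}{\sigma^{2}}\int_{0}^{x}|w(s)-\tilde w(s)|\,ds,
\]
and then apply Grönwall on $[0,T]$; this yields joint continuity in $(w_{0},\gamma)$ uniformly on compact $x$-intervals. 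Joint continuity of $w'(x;w_{0},\gamma)$ in $(x,w_{0},\gamma)$ is then immediate by reading it off the ODE,
\[
w'(x;w_{0},\gamma)=\tfrac{2}{\sigma^{2}}\bigl(\gamma-\pi(w(x;w_{0},\gamma))-h(x)\bigr),
\]
since $\pi$ is continuous (in fact Lipschitz), $h$ is continuous, and $w$ has just been shown to be jointly continuous in all three arguments. The hard step throughout is really the blow-up control in the global-existence argument; once the a priori bound is in hand, the remaining claims reduce to textbook ODE estimates.
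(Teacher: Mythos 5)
Your proposal is correct and follows essentially the same route as the paper: both rest on the Lipschitz continuity of $\pi$ from Lemma \ref{lem:property_pi_mu} and the continuity of $h$ from Assumption \ref{assum-h}, invoking Picard's theorem for part \eqref{item-a-lemma-w} and standard continuous dependence on initial data and parameters for part \eqref{item-b-lemma-w}. The only difference is that you write out the textbook Grönwall estimates (for non-explosion and for parameter continuity) that the paper delegates to citations of \cite{AtaTongarlak2013} and \cite{HsiehSibuya_1999}.
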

\begin{proof}
($a$) Since $\pi(w)$ is Lipschitz continuous (see Lemma~\ref{lem:property_pi_mu}) and $h(x)$ is continuous (see Assumption \ref{assum-h}),
using an analog to the proof for Proposition 3 (i) in \cite{AtaTongarlak2013}, we can use Picard's existence theorem
(see, e.g., Theorem 10 of \S1.7 in \cite{AdkinsDavidson_2012}) to show that there exists a unique continuous solution $w(x; w_0,\gamma)$ to (\ref{equ:ode_new}) on the interval $[0, \infty)$.

($b$) It follows from Theorem II-1-2 in \cite{HsiehSibuya_1999} that part \eqref{item-a-lemma-w} and the continuity of $h(x)$ imply that
$w(x; w_0,\gamma)$ is continuous in $w_0\in\mathbb{R}$ and $\gamma\in\mathbb{R}$.
Further, (\ref{equ:ode_new})  and the continuity of $h$, $\pi$, and $w$
immediately imply that $w'(x; w_0,\gamma)$ is continuous in $x\in\mathbb{R}^+$, $w_0\in\mathbb{R}$, and $\gamma\in\mathbb{R}$ repectively.
\end{proof}

It follows from (\ref{equ:Lipschitz_pi}) (by letting $w_1=w(x; w_0,\gamma)$ and $w_2=0$) and (\ref{equ:ode_new}) that
\begin{align}
&\frac{1}{2}\sigma^2w'(x; w_0,\gamma)+M|w(x; w_0,\gamma)|+\pi(0)+h(x)\geq\gamma,\quad\text{and}\label{equ:inequality_w_x}\\
&\frac{1}{2}\sigma^2w'(x; w_0,\gamma)-M|w(x; w_0,\gamma)|+\pi(0)+h(x)\leq\gamma. \label{equ:inequality_w_x_another}
\end{align}
which will be used in the following discussions.

The following lemma characterizes the monotonicity and asymptotical behaviors of $w(x; w_0,\gamma)$ with respect to $\gamma\in\mathbb{R}$ and $w_0\in\mathbb{R}$.
\begin{lemma}
\label{lem:w-property-1}
The following results hold:
\begin{enumerate}[$(a)$]
\item
\label{item-c-lemma-w}
For fixed $x>0$ and $w_0\in\mathbb{R}$,
$w(x; w_0,\gamma)$ is strictly increasing in $\gamma\in\mathbb{R}$
and
\begin{equation}
\label{eq:lim-w=infty-gamma}
\lim_{\gamma\to\pm\infty}w(x; w_0,\gamma)=\pm\infty.
\end{equation}
\item
\label{item-d-lemma-w}
For fixed $x\geq0$ and $\gamma\in\mathbb{R}$, $w(x;w_0,\gamma)$ is strictly increasing in $w_0\in\mathbb{R}$
and
\[
\lim_{w_0\to\pm\infty}w(x; w_0,\gamma)=\pm\infty.
\]
\end{enumerate}
\end{lemma}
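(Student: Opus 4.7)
The plan is to prove the two strict-monotonicity statements by a common ``difference'' technique and then deduce the limits, relying on the Lipschitz continuity of $\pi$ from Lemma~\ref{lem:property_pi_mu} and the $C^1$ existence/uniqueness from Lemma~\ref{lem:w-existence}. For part~\ref{item-c-lemma-w}, fix $\gamma_1<\gamma_2$, write $w_i(\cdot)=w(\cdot;w_0,\gamma_i)$, and set $v=w_2-w_1$. Subtracting the two instances of \eqref{equ:ode_new} yields
\[
\tfrac{1}{2}\sigma^2 v'(x)=(\gamma_2-\gamma_1)-\bigl[\pi(w_2(x))-\pi(w_1(x))\bigr].
\]
Since $v(0)=0$ we have $v'(0)=2(\gamma_2-\gamma_1)/\sigma^2>0$, so $v>0$ on some $(0,\varepsilon)$. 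If $v$ were to return to zero at a smallest $x_0>0$, then $w_2(x_0)=w_1(x_0)$ kills the bracketed term, giving $v'(x_0)=2(\gamma_2-\gamma_1)/\sigma^2>0$ and contradicting the inequality $v'(x_0)\le 0$ required for $v$ to come back down to $0$; hence $v(x)>0$ for all $x>0$. For part~\ref{item-d-lemma-w} with $w_0^1<w_0^2$ and the same $\gamma$, the same $v$ satisfies $v(0)>0$ and $\tfrac{1}{2}\sigma^2 v'(x)=-[\pi(w_2(x))-\pi(w_1(x))]$, so Lemma~\ref{lem:property_pi_mu} gives $|v'(x)|\le (2M/\sigma^2)|v(x)|$. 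On any maximal interval $[0,x_0)$ where $v>0$, this yields $v'(x)\ge -(2M/\sigma^2)v(x)$ and hence $v(x)\ge v(0)e^{-2Mx/\sigma^2}$; continuity at $x_0$ then contradicts $v(x_0)\le 0$, so $v(x)>0$ for all $x\ge 0$.

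The quantitative lower bound
\[
w(x;w_0^2,\gamma)-w(x;w_0^1,\gamma)\ge (w_0^2-w_0^1)\,e^{-2Mx/\sigma^2}
\]
extracted from the Gronwall step immediately gives $w(x;w_0,\gamma)\to\pm\infty$ as $w_0\to\pm\infty$, completing part~\ref{item-d-lemma-w}. For the limits in part~\ref{item-c-lemma-w}, I would sandwich $w(\cdot;w_0,\gamma)$ between solutions of linear ODEs. Pick any $\mu_0\in\mathcal{U}$; since $\pi(w)\le \mu_0 w+c(\mu_0)$, \eqref{equ:ode_new} gives $\tfrac{1}{2}\sigma^2 w'(x)\ge \gamma-\mu_0 w(x)-c(\mu_0)-h(x)$, and a Gronwall comparison of the same flavor as above yields $w(x;w_0,\gamma)\ge u(x)$, where $u$ solves the linear ODE $\tfrac{1}{2}\sigma^2 u'=\gamma-\mu_0 u-c(\mu_0)-h$ with $u(0)=w_0$. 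The variation-of-constants formula writes $u(x)$ as an affine function of $\gamma$ with strictly positive slope $(2/\sigma^2)\int_0^x e^{-2\mu_0(x-s)/\sigma^2}\,\mathrm{d}s$, so $u(x)\to+\infty$ as $\gamma\to+\infty$ and hence $w(x;w_0,\gamma)\to+\infty$. The opposite limit $\gamma\to-\infty$ is handled by the analogous one-sided lower bound $\pi(w)\ge \bar{\mu}\,w+c_{\min}$ (with $c_{\min}=\inf_{\mathcal{U}}c$), which is valid on $\{w\le 0\}$ and produces an upper linear comparison whose solution tends to $-\infty$; an auxiliary step shows that for $\gamma$ sufficiently negative the trajectory enters $\{w\le 0\}$ before reaching $x$, by integrating the symmetric upper bound on $\{w\ge 0\}$ and observing that the putative ``$w\ge 0$ throughout $[0,x]$'' regime would itself already force $w(x)\to-\infty$, contradicting $w(x)\ge 0$.

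The main obstacle is precisely this last $\gamma\to-\infty$ step: the two-sided Lipschitz bound $|\pi(w)-\pi(0)|\le M|w|$ produces a piecewise-linear (hence nonlinear) differential inequality rather than a single linear comparison, so one must partition the trajectory by the sign of $w$ and stitch the two linear regimes together. All other pieces---both strict monotonicities, the $w_0\to\pm\infty$ limits, and the $\gamma\to+\infty$ limit---follow cleanly from the first-crossing and Gronwall arguments sketched above.
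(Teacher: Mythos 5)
Your proposal is correct in substance and, for several pieces, takes a cleaner route than the paper. For the strict monotonicity in $\gamma$, the paper also argues by contradiction at a first crossing but splits into the cases $x_\gamma>0$ and $x_\gamma=0$ and runs an integral estimate using the Lipschitz bound on a carefully chosen subinterval; your observation that $v=0$ forces $v'=2(\gamma_2-\gamma_1)/\sigma^2>0$ at the crossing point collapses both cases into one line and is valid because $v\in\mathscr{C}^1$. For monotonicity in $w_0$, the paper again uses a localized integral estimate near the first zero, whereas your Gronwall bound $v(x)\ge v(0)e^{-2Mx/\sigma^2}$ does the same job and, as you note, hands you the $w_0\to\pm\infty$ limits for free (the paper omits that limit as "similar"). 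Your $\gamma\to+\infty$ argument is genuinely different and simpler: exploiting the one-sided affine majorant $\pi(w)\le\mu_0 w+c(\mu_0)$ gives a single global linear comparison, while the paper uses the two-sided Lipschitz bound \eqref{equ:inequality_w_x} and must first show $w\ge 0$ on $[x_3,x]$ for large $\gamma$ before the comparison applies. Since $\pi$ is concave, no global affine minorant exists, so your recognition that the $\gamma\to-\infty$ direction cannot be handled the same way and requires a sign partition is exactly right.

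Two points in that last step still need to be nailed down to match the rigor of the paper's $+\infty$ argument (which it runs and you must mirror). First, the entry into $\{w\le 0\}$ must occur before a \emph{fixed} $x_3\in(0,x)$ uniformly for all sufficiently negative $\gamma$ — otherwise the entry point could drift toward $x$ and the final comparison integral $\int_{y_\gamma}^{x}$ need not blow up; this follows by evaluating the $\{w\ge 0\}$ comparison at $y=x_3$ and noting its $\gamma$-coefficient is positive. Second, you need that once the trajectory enters $\{w\le 0\}$ it stays there on $[x_3,x]$, since the minorant $\pi(w)\ge\bar{\mu}w+c_{\min}$ is only valid on that set; this holds for $\gamma<\pi(0)$ because $w(y)=0$ forces $w'(y)=\tfrac{2}{\sigma^2}(\gamma-\pi(0)-h(y))<0$ by Assumption \ref{assum-h}. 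With these two remarks inserted, your stitched argument is complete and coincides with the symmetric version of the paper's proof of \eqref{eq:w>0}.
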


\begin{proof}

($a$) First, if $\gamma_1<\gamma_2$, we show that $w(x; w_0, \gamma_1)<w(x; w_0,\gamma_2)$ for any fixed $x>0$ and $w_0\in\mathbb{R}$.
Fix $w_0\in \mathbb{R}$. Suppose that $w(x;w_0,\gamma_1)\geq w(x;w_0,\gamma_2)$ for some $x>0$.
Define
\[
f_{\gamma}(x)=w(x; w_0,\gamma_2)-w(x; w_0,\gamma_1)\quad\text{and}\quad
x_{\gamma}=\inf\{x>0: f_{\gamma}(x)\leq 0\}.
\]
The following proof is divided into two cases: $x_{\gamma}>0$ or $x_{\gamma}=0$.

If $x_{\gamma}>0$, then it follows from the continuity of $w(\cdot; w_0,\gamma_i)$, $i=1,2$, that
$f_{\gamma}(x_{\gamma})=0=f_{\gamma}(0)$ and $f_{\gamma}(x)>0$ for all $x\in (0, x_{\gamma})$.
By the continuity of $f_{\gamma}(\cdot)$, there exist two numbers $x_1, x_2\in(0, x_{\gamma})$ with $x_1<x_2$ such that
\begin{equation}
\label{eq:f>f}
f_{\gamma}(x_1)>f_{\gamma}(x_2)\quad \text{and}\quad
M f_{\gamma}(x)<\gamma_2-\gamma_1 \text{ for all $x\in[x_1,x_2]$,}
\end{equation}
where $M=\max\{\abs{\underline{\mu}},\abs{\bar{\mu}}\}$ (see Lemma \ref{lem:property_pi_mu}).
Moreover, it follows from (\ref{equ:ode_new}) that
\begin{equation}\label{equ:ode_gamma}
\frac{1}{2}\sigma^2f_{\gamma}'(x)+\pi(w(x; w_0,\gamma_2))-\pi(w(x; w_0,\gamma_1))=\gamma_2-\gamma_1, \text{ for }x\geq 0.
\end{equation}
Integrating (\ref{equ:ode_gamma}) from $x_1$ to $x_2$, we have
\begin{align*}
&(\gamma_2-\gamma_1)\cdot(x_2-x_1)\\
&\quad=\frac{1}{2}\sigma^2(f_{\gamma}(x_2)-f_{\gamma}(x_1))+\int_{x_1}^{x_2}\big[\pi(w(y; w_0, \gamma_2))-\pi(w(y; w_0, \gamma_1))\big]\, \mathrm{d}y\\
&\quad< \int_{x_1}^{x_2}\big[\pi(w(y; w_0, \gamma_2))-\pi(w(y; w_0, \gamma_1))\big]\, \mathrm{d}y\\
&\quad\leq  \int_{x_1}^{x_2}Mf_{\gamma}(y)\, \mathrm{d}y\\
&\quad\leq (\gamma_2-\gamma_1)\cdot(x_2-x_1),
\end{align*}
where the first and last inequalities follow from \eqref{eq:f>f} and the second inequality follows from \eqref{equ:Lipschitz_pi}.
This is a contradiction.

If $x_{\gamma}=0$, then there exists a sequence $\{\hat{x}_n, n\in\mathbb{N}\}$ such that $\hat{x}_n\downarrow 0$ as $n\to\infty$ and $f_{\gamma}(\hat{x}_n)\leq 0$. Therefore,
$$\frac{f_{\gamma}(\hat{x}_n)}{\hat{x}_n}\leq 0, \text{ for all }n\geq 1.$$
Since $f_{\gamma}(0)=0$, taking the limit as $n\to\infty$ gives $f_{\gamma}'(0)\leq 0$, which contradicts (\ref{equ:ode_gamma}) with $x=0$.
Therefore, the contradictions for both $x_{\gamma}>0$ and $x_{\gamma}=0$ imply that $w(x; w_0,\gamma)$ is strictly increasing in $\gamma\in\mathbb{R}$ for any given $x>0$ and $w_0\in\mathbb{R}$.

Next, we prove that $\lim_{\gamma\to\infty}w(x; w_0,\gamma)=\infty$ for any given $x>0$ and $w_0\in\mathbb{R}$.
Fix $w_0\in\mathbb{R}$. There must exist a number $\gamma_3$ such that for all $\gamma\geq\gamma_3$, both
\begin{equation}\label{gamma_constraint_1}
\gamma>\pi(0)+h(x)
\end{equation}
and
\begin{equation}\label{gamma_constraint_2}
w_0e^{\xi y}+\frac{2}{\sigma^2}\int_0^y\big[\gamma-\pi(0)-h(z)\big]e^{\xi(y-z)}\,\mathrm{d}z\geq0
\text{ for all $y\in[0,x]$}
\end{equation}
hold. We claim that for any fixed $x_3\in(0,x)$, it holds that
\begin{equation}
\label{eq:w>0}
 w(y; w_0,\gamma)\geq 0\quad \text{for all $y\in [x_3,x]$ and $\gamma\geq\gamma_3$}.
\end{equation}
First, we show that for each $\gamma\geq \gamma_3$, there exists a number $x_4\in(0,x_3]$ (depending on $\gamma$ and $w_0$) such that
$w(x_4;w_0,\gamma)\geq0$. If not, there exists a number $\gamma_4$ with $\gamma_4\geq \gamma_3$ such that $w(y; w_0,\gamma_4)<0$ for all $y\in(0,x_3]$, and then \eqref{equ:inequality_w_x} implies that for all $y\in(0,x_3]$,
\[
\frac{1}{2}\sigma^2w'(y; w_0,\gamma_4)-Mw(y; w_0,\gamma_4)\geq \gamma_4-\pi(0)-h(y),
\]
which implies that for all $y\in(0,x_3]$,
\begin{eqnarray*}
w(y;w_0,\gamma_4)&\geq&w_0e^{\xi y}
+\frac{2}{\sigma^2}\int_0^y\big[\gamma_4-\pi(0)-h(z)\big]e^{\xi(y-z)}\,\mathrm{d}z\geq0,
\end{eqnarray*}
where $\xi=2M/\sigma^2$ and the last inequality follows from \eqref{gamma_constraint_2}, $\gamma_4\geq \gamma_3$, and $y\leq x_3<x$.
This contradiction implies that for any given $\gamma\geq\gamma_3$, there exists a number $x_4\in(0,x_3]$ such that
$w(x_4;w_0,\gamma)\geq0$.
It follows from \eqref{equ:ode_new} and \eqref{gamma_constraint_1} that for any $y\in[0,x]$ and $\gamma\geq\gamma_3$, if $w(y; w_0,\gamma)=0$, we have $w'(y; w_0,\gamma)>0$.
Thus, the continuity of $w(\cdot; w_0,\gamma)$ and $w(x_4; w_0,\gamma)\geq 0$ imply
\eqref{eq:w>0}. Hence, \eqref{equ:inequality_w_x} implies that for all $y\in[x_3,x]$ and $\gamma\geq\gamma_3$,
\[
\frac{1}{2}\sigma^2w'(y; w_0,\gamma)+Mw(y; w_0,\gamma)\geq \gamma-\pi(0)-h(y),
\]
and thus
\begin{eqnarray*}
w(x; w_0,\gamma)&\geq&w(x_3; w_0,\gamma)e^{-\xi (x-x_3)}+\frac{2}{\sigma^2}\int_{x_3}^{x}\big[\gamma-\pi(0)-h(y)\big]e^{-\xi(x-y)}\,\mathrm{d}y\\
&\geq & \frac{2}{\sigma^2}\int_{x_3}^{x}\big[\gamma-\pi(0)-h(y)\big]e^{-\xi(x-y)}\,\mathrm{d}y
\end{eqnarray*}
for all $\gamma\geq\gamma_3$. Letting $\gamma\to\infty$ in the inequality above yields $\lim_{\gamma\to\infty}w(x; w_0,\gamma)=\infty$.

Similar to the proof of $\lim_{\gamma\to\infty}w(x; w_0,\gamma)=\infty$, except that \eqref{equ:inequality_w_x_another} is used instead of \eqref{equ:inequality_w_x}, one can show that $\lim_{\gamma\to-\infty}w(x; w_0,\gamma)=-\infty$. The detailed proof is omitted for brevity.

($b$) Choose any two numbers $w_0^{\dag}$ and $w_0^{\ddag}$ satisfying $w_0^{\dag}<w_0^{\ddag}$.
We want to show that $w(x; w_0^{\dag}, \gamma)<w(x; w_0^{\ddag},\gamma)$ for all $x\geq 0$ and $\gamma\in\mathbb{R}$.
Fix $\gamma\in\mathbb{R}$.
Obviously, the condition holds for $x=0$. Suppose that $w(x;w_0^{\dag},\gamma)\geq w(x;w_0^{\ddag},\gamma)$ for some $x>0$. Define
\[
f_w(x)=w(x; w_0^{\ddag},\gamma)-w(x; w_0^{\dag},\gamma)\quad \text{and}\quad
x_w=\inf\{x>0: f_w(x)\leq 0\}.
\]
Since $w(0;w_0^{\dag})=w_0^{\dag}<w_0^{\ddag}=w(0;w_0^{\ddag})$,
it follows from the continuity of $w(\cdot; w_0^{\dag},\gamma)$ and $w(\cdot; w_0^{\ddag},\gamma)$ that $x_w>0$, $f_w(x_w)=0$
and $f_w(x)>0$ for all $x\in[0,x_w)$.
By the continuity of $f_w(\cdot)$, there exists a number $x_5\in(0, x_w)$ such that
\begin{equation}
\label{eq:x-5}
M(x_w-x_5)<\sigma^2/2\quad \text{and}\quad f_w(x)\leq f_w(x_5)\quad \text{for all $x\in[x_5, x_w]$}.
\end{equation}
Moreover, it follows from \eqref{equ:ode_new} that
\begin{equation}
\label{equ:ode_w0}
\frac{1}{2}\sigma^2f_w'(x)+\pi(w(x; w_0^{\ddag},\gamma))-\pi(w(x; w_0^{\dag},\gamma))=0\quad \text{for $x\geq 0$}.
\end{equation}
Integrating (\ref{equ:ode_w0}) from $x_5$ to $x_w$, we have
\begin{align*}
0&=\frac{1}{2}\sigma^2(f_w(x_w)-f_w(x_5))+\int_{x_5}^{x_w}\big[\pi(w(y; w_0^{\ddag}, \gamma))-\pi(w(y; w_0^{\dag}, \gamma))\big]\,\mathrm{d}y\\
&<-\frac{1}{2}\sigma^2f_w(x_5)+\int_{x_5}^{x_w}Mf_w(y)\,\mathrm{d}y
\\
&\leq -\frac{1}{2}\sigma^2f_w(x_5)+(x_w-x_5)Mf_w(x_5)
\\
&<0,
\end{align*}
where the first inequality follows from $f_w(x_w)=0$ and \eqref{equ:Lipschitz_pi},
and the last two inequalities follow from \eqref{eq:x-5}.
This contradiction implies that $w(x; w_0,\gamma)$ is strictly increasing in $w_0$ for all $x\geq 0$ and $\gamma\in\mathbb{R}$.

Finally, the proof of $\lim_{w_0\to\pm\infty}w(x; w_0,\gamma)=\pm\infty$ is quite similar to that of $\lim_{\gamma\to\pm\infty}w(x; w_0,\gamma)=\pm\infty$ and is omitted for brevity.
\end{proof}

The following lemma characterizes the monotonic properties of $w(x; w_0,\gamma)$ with respect to $x\in\mathbb{R}^+$ when
$\gamma$ takes different values.

\begin{lemma}
\label{lem:w-property}
Fix $w_0\in\R$. There exists an upper bound $\bar{\gamma}(w_0)$ (possibly infinite) with $\bar{\gamma}(w_0)>\pi(w_0)$ such that the following holds.
\begin{enumerate}[$(a)$]
\item\label{solution_x_1}
If $\gamma\leq\pi(w_0)$,
then $w(x;w_0,\gamma)$ is strictly decreasing in $x\in(0,\infty)$ and
\begin{equation}
\label{eq:lim=-infty}
\lim_{x\to\infty}w(x; w_0,\gamma)=-\infty.
\end{equation}
\item\label{solution_x_2} If $\gamma\geq\bar{\gamma}(w_0)$,
then $w(x; w_0,\gamma)$ is strictly increasing in $x\in[0,\infty)$ and
\begin{equation}
\label{eq:lim=infty}
\lim_{x\to\infty}w(x;w_0,\gamma)=\infty.
\end{equation}
\item\label{solution_x_3}
If $\pi(w_0)<\gamma<\bar{\gamma}(w_0)$, then there exists a unique number $x^{\star}(w_0,\gamma)$
such that $w(x; w_0,\gamma)$ is strictly increasing in $x\in[0,x^{\star}(w_0,\gamma)]$
and strictly decreasing in $x\in[x^{\star}(w_0,\gamma),\infty)$.
Furthermore, $\lim_{x\to\infty}w(x; w_0,\gamma)=-\infty$.
\end{enumerate}
\end{lemma}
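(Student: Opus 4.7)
My plan is to exploit a geometric description of the ODE \eqref{equ:ode_new} in the $(x,w)$-plane. For each $x\geq 0$, the concavity of $\pi$ (Lemma \ref{lem:property_pi_mu}) makes the superlevel set $\{w:\pi(w)\geq \gamma-h(x)\}$ a (possibly empty) closed interval $[w_-(x),w_+(x)]$, and the ODE reads $w'(x)<0$ strictly inside this ``strip,'' $w'(x)=0$ on its boundary, and $w'(x)>0$ outside. Since $h$ is strictly increasing, $\gamma-h(x)$ is strictly decreasing, so wherever they are defined, $w_+(\cdot)$ is strictly increasing and $w_-(\cdot)$ is strictly decreasing: the strip widens strictly with $x$. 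The three cases of the lemma correspond to three starting configurations relative to this strip, and I combine the picture with the monotonicity in $\gamma$ and continuity in $\gamma$ of $w$ from Lemma \ref{lem:w-property-1}\eqref{item-c-lemma-w} and Lemma \ref{lem:w-existence}\eqref{item-b-lemma-w}.

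For part \eqref{solution_x_1}, the condition $\gamma\leq \pi(w_0)$ places $w_0$ in the closed strip at $x=0$. I show the trajectory is trapped: it cannot escape through $w_+$ because $w$ is non-increasing in the strip while $w_+$ is non-decreasing; it cannot escape through $w_-$ because at any contact point $x^\ast$ one has $w'(x^\ast)=0$, and a Gronwall-type bootstrap using the Lipschitz bound \eqref{equ:Lipschitz_pi} gives $|w(y)-w(x^\ast)|\leq C(y-x^\ast)\bigl[h(y)-h(x^\ast)\bigr]$ for $y$ slightly larger than $x^\ast$, so the strict increment $h(y)-h(x^\ast)>0$ in the ODE dominates the perturbation of $\pi(w)$ and forces $w'(y)<0$ just after $x^\ast$. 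Thus $w'\leq 0$ on $[0,\infty)$; the zero set of $w'$ has empty interior (a constancy interval would make $h$ constant by the ODE, contradicting strict monotonicity), so $w$ is strictly decreasing on $(0,\infty)$. A finite limit $L:=\lim_{x\to\infty} w(x)$ would keep $\pi(w(x))$ bounded while $h(x)\to\infty$ drives $w'(x)\to -\infty$, contradicting the finite limit, so $w(x)\to -\infty$.

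Applied at any candidate local minimum $x^\ast>0$ for any $\gamma$, the same bootstrap rules out local minima on $(0,\infty)$. Combined with $w'(0)>0$ whenever $\gamma>\pi(w_0)$, each such trajectory is either strictly increasing on $[0,\infty)$ or has exactly one critical point, a local maximum $x^\ast(w_0,\gamma)$ followed by strict decrease. I then define
\[
\bar\gamma(w_0)\;:=\;\inf\bigl\{\gamma>\pi(w_0):w(\cdot\,;w_0,\gamma)\text{ is strictly increasing on }[0,\infty)\bigr\},
\]
with $\inf\emptyset=\infty$. To check $\bar\gamma(w_0)>\pi(w_0)$, pick $T$ with $w(T;w_0,\pi(w_0))<w_0$ from part \eqref{solution_x_1}; by continuity of $w(T;w_0,\cdot)$ via Lemma \ref{lem:w-existence}\eqref{item-b-lemma-w}, $w(T;w_0,\gamma)<w_0$ for $\gamma$ in a right-neighborhood of $\pi(w_0)$, while $w'(0)>0$ forces $w>w_0$ immediately after $0$; so $w$ is not monotone, excluding $\gamma$ from the set defining $\bar\gamma$.

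For part \eqref{solution_x_2}, given $\gamma>\bar\gamma(w_0)$ I pick $\gamma'\in(\bar\gamma(w_0),\gamma)$ with $w(\cdot;w_0,\gamma')$ strictly increasing; its limit must be $+\infty$ by the argument in part \eqref{solution_x_1} with the sign flipped. If $w(\cdot;w_0,\gamma)$ had a local max, it would decrease to $-\infty$, but Lemma \ref{lem:w-property-1}\eqref{item-c-lemma-w} gives $w(x;w_0,\gamma)>w(x;w_0,\gamma')\to\infty$, a contradiction; so $w(\cdot;w_0,\gamma)$ is strictly increasing to $\infty$, and the boundary case $\gamma=\bar\gamma(w_0)$ follows by continuity in $\gamma$. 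For part \eqref{solution_x_3}, the range $\pi(w_0)<\gamma<\bar\gamma(w_0)$ excludes strict increase, so the dichotomy yields a unique local maximum $x^\ast(w_0,\gamma)$; the post-maximum tail is handled by reapplying the part \eqref{solution_x_1} trap argument to the solution starting from $(x^\ast,w(x^\ast))$, yielding strict decrease to $-\infty$. The principal obstacle is the Gronwall-type bootstrap that rules out local minima: without differentiability of $h$, one must carefully verify that the $\pi$-perturbation is of strictly smaller order than the $h$-increment as $y-x^\ast\downarrow 0$.
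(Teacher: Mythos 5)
Your proof is correct, and its overall architecture mirrors the paper's: rule out local minima and flat stretches of $w$, split on the sign of $w'(0)=\tfrac{2}{\sigma^2}(\gamma-\pi(w_0))$, define a threshold $\bar\gamma(w_0)$, and get the limits $\pm\infty$ by passing to the limit in the ODE. The genuine difference is in the crucial ``no local minimum'' step, which you also correctly identify as the main risk. You handle it by a local Gronwall bootstrap at a critical point $x^\ast$: writing $g(y)=w(y)-w(x^\ast)$ and $H(y)=h(y)-h(x^\ast)$, the ODE gives $\tfrac{1}{2}\sigma^2|g'(y)|\le M|g(y)|+H(y)$, hence (using that $H$ is increasing) $|g(y)|\le \tfrac{2}{\sigma^2}(y-x^\ast)H(y)e^{2M(y-x^\ast)/\sigma^2}$, so $\tfrac{1}{2}\sigma^2 w'(y)\le \bigl(MC(y-x^\ast)-1\bigr)H(y)<0$ for $y-x^\ast$ small. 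This does work, and in particular your closing worry is unfounded: no differentiability or modulus of continuity of $h$ is needed, because the Gronwall bound is multiplicative in $H(y)$, so the $\pi$-perturbation is automatically of smaller order than the $h$-increment. The paper avoids all of this with a purely algebraic two-point comparison: if $w(x_1)=w(x_2)$ with $x_1<x_2$ and $w'(x_1)\le 0\le w'(x_2)$, subtracting the ODE at the two points cancels the $\pi$-terms exactly and leaves $\tfrac{\sigma^2}{2}(w'(x_1)-w'(x_2))=h(x_2)-h(x_1)>0$, a contradiction; this simultaneously excludes local minima and constancy intervals in one line. Your definition of $\bar\gamma(w_0)$ as the infimum of the ``strictly increasing'' set is complementary to the paper's supremum of $\{\gamma:\exists x,\ w'(x)<0\}$: yours makes part $(c)$ immediate (any $\gamma<\bar\gamma(w_0)$ is by definition non-increasing, hence has a turning point) but obliges you to prove upward-closedness for part $(b)$, which you do correctly via the $\gamma$-monotonicity of Lemma \ref{lem:w-property-1}; the paper's definition gives part $(b)$ for free but requires the comparison argument with a larger $\gamma^{\dag}$ in part $(c)$. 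Both routes are sound; the paper's key device is shorter, yours is more robust in spirit (it would survive weaker structure on $\pi$).
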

\begin{remark}
\label{rem:x-star}
For convenience, in the following sections, we let $x^{\star}(w_0,\gamma)=0$ if $\gamma\leq \pi(w_0)$ and $x^{\star}(w_0,\gamma)=\infty$ if $\gamma\geq\bar{\gamma}(w_0)$.
\end{remark}
\begin{proof}
Before proving \eqref{solution_x_1}-\eqref{solution_x_3}, we first claim the following  property:
There do not exist two numbers $x_1$ and $x_2$ with $x_1<x_2$ such that
\begin{equation}
\label{eq:w=w,w'<w'}
w(x_1; w_0,\gamma)=w(x_2;w_0,\gamma)\quad \text{and}\quad
w'(x_1; w_0,\gamma)\leq 0\leq w'(x_2;w_0, \gamma).
\end{equation}
This is because: it follows from (\ref{equ:ode_new}) with $x=x_1$ and $x=x_2$ that
\[
\frac{\sigma^2}{2} w'(x_1; w_0,\gamma)+\pi(w(x_1; w_0,\gamma))+h(x_1)=\frac{\sigma^2}{2}w'(x_2; w_0,\gamma)+\pi(w(x_2; w_0,\gamma))+h(x_2),
\]
which is impossible considering \eqref{eq:w=w,w'<w'} and $h(x_1)<h(x_2)$ (using Assumption~\ref{assum-h}).
Therefore, we have
\begin{enumerate}[(i)]
\item\label{item-i-property} $w(x; w_0,\gamma)$ can-not have a local minimizer in $x\in(0,\infty)$; and
\item\label{item-ii-property} $w(x; w_0,\gamma)$ can-not be a constant in any interval $[x_1,x_2]$ with $0\leq x_1<x_2<\infty$.
\end{enumerate}

Next we use properties \eqref{item-i-property}-\eqref{item-ii-property} to prove \eqref{solution_x_1}-\eqref{solution_x_3}.
First, taking $x=0$ in \eqref{equ:ode_new} and noting $h(0)=0$, we have
\begin{equation}
\label{eq:w'(0)}
w'(0; w_0,\gamma)=\frac{2}{\sigma^2}\cdot(\gamma-\pi(w_0)).
\end{equation}

($a$) The proof of monotonicity is divided into two cases: $\gamma<\pi(w_0)$ or $\gamma=\pi(w_0)$.

If $\gamma<\pi(w_0)$, \eqref{eq:w'(0)} implies that $w'(0;w_0,\gamma)<0$.
The continuity of $w'(x; w_0,\gamma)$ with respect to $x$ and properties \eqref{item-i-property}-\eqref{item-ii-property}
immediately imply that $w(x;w_0,\gamma)$ is strictly decreasing in $x$ for $x>0$.

If $\gamma=\pi(w_0)$, using the results when $\gamma<\pi(w_0)$ and the continuity of $w'(x; w_0,\gamma)$ in $\gamma$ ,
we must have $w'(x;w_0,\gamma)\leq 0$ for $x>0$, and property \eqref{item-ii-property}
further implies that $w(x;w_0,\gamma)$ is strictly decreasing in $x$ for $x>0$.

Next we show that $\lim_{x\to\infty}w(x; w_0,\gamma)=-\infty$.
Otherwise, there exists a finite number $\underline{w}$ such that
$\lim_{x\to\infty}w(x; w_0,\gamma)=\underline{w}$ and thus $\lim_{x\to\infty}w'(x; w_0,\gamma)=0$.
Taking $x\to\infty$ in (\ref{equ:ode_new}) yields that $\lim_{x\to\infty}h(x)=\gamma-\pi(\underline{w})$,
which contradicts with Assumption~\ref{assum-h}.

($b$) Define
\begin{equation}
\label{eq:bar-gamma}
\bar{\gamma}(w_0)=\sup\{\gamma\in\mathbb{R}: \mbox{there exisits an $x>0$ such that $w'(x; w_0,\gamma)<0$}\}.
\end{equation}
Part \eqref{solution_x_1} implies that $\bar{\gamma}(w_0)\geq \pi(w_0)$ and thus $\bar{\gamma}(w_0)$ is well defined although it might be $\infty$.
If $\gamma\geq\bar{\gamma}(w_0)$, then by the definition of $\bar{\gamma}(w_0)$ in \eqref{eq:bar-gamma}
and the continuity of $w'(x; w_0,\gamma)$ in $\gamma$,
we have that $w'(x; w_0,\gamma)\geq 0$ for all $x\geq 0$.
Then property \eqref{item-ii-property} implies that $w(x; w_0,\gamma)$ is strictly increasing in $x$.

The proof of  $\lim_{x\to\infty}w(x; w_0,\gamma)=\infty$ is similar to that of \eqref{eq:lim=-infty} and thus is omitted.

($c$) First, we must have
\[
\bar{\gamma}(w_0)>\pi(w_0).
\]
Otherwise, there is a contradiction between part \eqref{solution_x_1} and \eqref{solution_x_2} when $\gamma=\pi(w_0)=\bar{\gamma}(w_0)$. Now, we consider the case when $\pi(w_0)<\gamma<\bar{\gamma}(w_0)$.
First, we claim that for each $\gamma\in(\pi(w_0),\bar{\gamma}(w_0))$,
there exists a number $x>0$ such that $w'(x; w_0,\gamma)<0$.
If no such $x$ exists, we have that $w'(x;w_0,\gamma)\geq 0$ for all $x>0$.
Using arguments similar to those used to prove \eqref{eq:lim=-infty} in part \eqref{solution_x_1},
we can obtain
\begin{equation}
\label{eq:lim=infty-1}
\lim_{x\to\infty}w(x; w_0,\gamma)=\infty.
\end{equation}
On the other hand,
using the definition of $\bar{\gamma}(w_0)$ and the continuity of $w(x;w_0,\gamma)$ in $\gamma$,
there exists a number $\gamma^{\dag}\in(\gamma, \bar{\gamma}(w_0))$ such that $w'(x^{\dag}; w_0,\gamma^{\dag})<0$ for some $x^{\dag}>0$.
Then, we must have that $w(x; w_0,\gamma^{\dag})$ is strictly decreasing for $x\geq x^{\dag}$ and
\begin{equation}
\label{eq:lim=-infty-1}
\lim_{x\to\infty}w(x;w_0,\gamma^{\dag})=-\infty
\end{equation}
(the analysis is very similar to that of part \eqref{solution_x_1} and thus is omitted).
However, \eqref{eq:lim=infty-1} and \eqref{eq:lim=-infty-1} contradict
Lemma~\ref{lem:w-property-1}~$\eqref{item-c-lemma-w}$ with $\gamma^{\dag}>\gamma$.
Thus, we have proven that for each $\gamma\in(\pi(w_0),\bar{\gamma}(w_0))$,
there exists a number $x>0$ such that $w'(x; w_0,\gamma)<0$.
Define
\[
x^{\star}(w_0,\gamma)=\inf\{x\geq 0: w'(x; w_0,\gamma)<0\}.
\]
Since $w'(0; w_0,\gamma)=\gamma-\pi(w_0)>0$ and $w'(x; w_0, \gamma)$ is continuous in $x$, we have $x^{\star}(w_0,\gamma)>0$,
$w'(x; w_0,\gamma)\geq 0$ for $x\in[0, x^{\star}(w_0,\gamma))$ and $w'(x^{\star}(w_0,\gamma); w_0,\gamma)=0$.
Further, the properties \eqref{item-i-property}-\eqref{item-ii-property} imply that
$w(x;w_0,\gamma)$ is strictly increasing in $x\in[0,x^{\star}(w_0,\gamma)]$
and strictly decreasing in $x\in[x^{\star}(w_0,\gamma),\infty)$.

Finally, the proof of $\lim_{x\to\infty}w(x; w_0,\gamma)=-\infty$ is very similar to that of \eqref{eq:lim=-infty} and thus is omitted.
\end{proof}

\subsection{Determining optimal parameters by \eqref{eq:int=-K}-\eqref{eq:w(Q)=w(S)=l}}
\label{sec:optimal_parameters}
In the previous section, we obtained structural and asymptotical properties of solution $w(x;w_0,\gamma)$ to \eqref{eq:ODE}. In this subsection, we use these properties to find the optimal policy parameters $(q^{\star},Q^{\star},S^{\star})$
and auxiliary parameters $(w_0^{\star},\gamma^{\star})$
such that the boundary conditions \eqref{eq:int=-K}-\eqref{eq:w(Q)=w(S)=l} are satisfied.

Specifically, in Lemma \ref{lem:gamma1}, we show that for any $w_0<\ell$,
there exist unique $\gamma_1^{\star}(w_0)$, $Q(w_0)$, and $S(w_0)$
with $\gamma_1^{\star}(w_0)\in(\pi(w_0),\bar{\gamma}(w_0))$ and $0<Q(w_0)<x^{\star}(w_0,\gamma_1^{\star}(w_0))<S(w_0)$
such that
\begin{align}
&w(Q(w_0); w_0,\gamma_1^{\star}(w_0))=w(S(w_0); w_0,\gamma_1^{\star}(w_0))=\ell\quad\text{and}\label{eq:w=w=l}\\
&\int_{Q(w_0)}^{S(w_0)}\big[w(x;w_0,\gamma_1^{\star}(w_0))-\ell\big]\,\mathrm{d}x=L.\label{eq:int=L-gamma-1}
\end{align}
In Lemma \ref{lem:gamma2}, we prove that for any $w_0<\bar{w}_0<-k$,
there exist unique $\gamma_2^{\star}(w_0)$ and $q(w_0)$ with $\gamma_2^{\star}(w_0)\in(\pi(w_0),\infty)$
and $0<q(w_0)<x^{\star}(w_0,\gamma_2^{\star}(w_0))$ such that
\begin{align}
w(q(w_0);w_0,\gamma_2^{\star}(w_0))=-k\quad\text{and}\quad
\int_{0}^{q(w_0)}\big[w(x;w_0,\gamma_2^{\star}(w_0))+k\big]\,\mathrm{d}x=-K,\label{eq:int=-K-gamma-2}
\end{align}
where $\bar{w}_0$ is a number satisfying $\bar{w}_0<-k$.
Finally, in Lemma \ref{lem:gamma1=gamma2},
we show that we can choose a number $w_0^{\star}$ with $w_0^{\star}<\bar{w}_0$ such that
\begin{equation}
\label{eq:gamma1=gamma2}
\gamma_1^{\star}(w_0^{\star})=\gamma_2^{\star}(w_0^{\star}).
\end{equation}
Let $\gamma^{\star}=\gamma_1^{\star}(w_0^{\star})$, $q^{\star}=q(w_0^{\star})$,
$Q^{\star}=Q(w_0^{\star})$, $S^{\star}=S(w_0^{\star})$, $x^{\star}=x^{\star}(w_0^{\star}, \gamma^{\star})$,
and $w^{\star}(x)=w(x;w_0^{\star},\gamma^{\star})$.
Figure \ref{fig:optimal_function} depicts the function $w^{\star}$ and the optimal policy parameters.
Using Lemmas \ref{lem:gamma1}-\ref{lem:gamma1=gamma2}, we can prove Theorem \ref{thm:existence} as follows.
\begin{proof}[Proof of Theorem \ref{thm:existence}]
Recall that $w^{\star}$ is a continuously differentiable solution to \eqref{eq:ODE},
so \eqref{eq:w=w=l}-\eqref{eq:int=-K-gamma-2} with \eqref{eq:gamma1=gamma2} ensure \eqref{eq:int=-K}-\eqref{eq:w(Q)=w(S)=l}.
Besides, since $\gamma^{\star}=\gamma_1^{\star}(w_0^{\star})\in(\pi(w_0),\bar{\gamma}(w_0))$, Lemma \ref{lem:w-property}
\eqref{solution_x_3} and  $w^{\star}(q^{\star})=-k<\ell=w^{\star}(Q^{\star})$ imply $q^{\star}<Q^{\star}$,
and then $0<q^{\star}<Q^{\star}<S^{\star}$.
Thus we have finished the proof of Theorem \ref{thm:existence} \eqref{item-thm-existence-a}.
Furthermore, the facts that $\mu(\cdot)$ is decreasing (see Lemma \ref{lem:property_pi_mu}) and $w^{\star}(x)$ is increasing in $x\in [0,x^{\star}]$ and decreasing in $x\in [x^{\star},\infty)$ imply that
$\mu^{\star}(x)=\mu(w^{\star}(x))$ is decreasing in $x\in [0,x^{\star}]$ and increasing in $x\in [x^{\star},\infty)$.
This completes the proof of Theorem \ref{thm:existence} \eqref{item-thm-existence-b}.
\end{proof}

\begin{remark}
\label{rem:uniqueness}
It can be seen from the proof of Theorem \ref{thm:existence} that if we can prove the uniqueness of $w_0^{\star}$ in \eqref{eq:gamma1=gamma2}, then the uniqueness of parameters $q^{\star}$, $Q^{\star}$,
$S^{\star}$ and $\gamma^{\star}$ can be immediately obtained. However, it is very difficult to prove the uniqueness of $w_0^{\star}$ in \eqref{eq:gamma1=gamma2} directly. Instead, we will first show the uniqueness of $\gamma^{\star}$ using Theorem \ref{thm:mainresults}, then the uniqueness of $w_0^{\star}$ follows by noting that $\gamma_1^{\star}(w_0^{\star})=\gamma^{\star}$ and that $\gamma_1^{\star}(w_0)$ is strictly decreasing in $w_0$. See the proof of Corollary \ref{cor:unique}.
\end{remark}

\begin{figure}
  \centering
  \includegraphics[width=10cm]{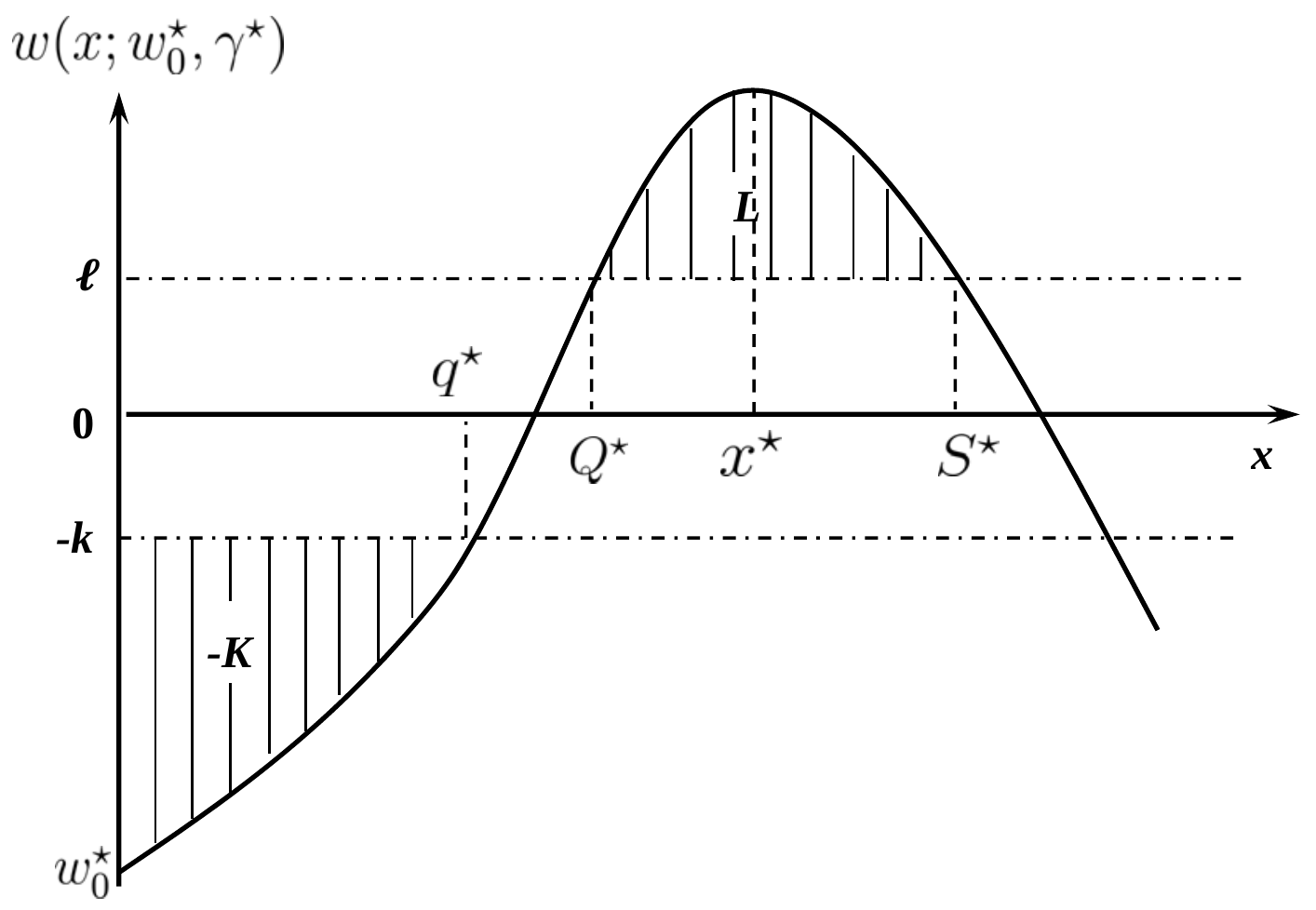}\\
  \caption{Optimal parameters and the corresponding function}\label{fig:optimal_function}
\end{figure}

Now, it remains to prove \eqref{eq:w=w=l}-\eqref{eq:gamma1=gamma2} by the following
Lemmas  \ref{lem:gamma1}-\ref{lem:gamma1=gamma2}.
Recalling  the definition of $\bar{\gamma}(w_0)$ in Lemma \ref{lem:w-property}, we first have the following lemma.
\begin{lemma}
\label{lem:gamma1}
\begin{enumerate}[$(a)$]
\item\label{item-lemma-gamma1-a}
For any $w_0<\ell$, there exists a finite number $\gamma_1(w_0)$ with
$\gamma_1(w_0)\in(\pi(w_0),\bar{\gamma}(w_0))$ such that
for any $\gamma\in(\gamma_1(w_0),\bar{\gamma}(w_0))$, there exist two unique numbers $Q(w_0,\gamma)$ and $S(w_0,\gamma)$
with $0<Q(w_0,\gamma)<x^{\star}(w_0,\gamma)<S(w_0,\gamma)$ satisfying
\[
w(Q(w_0,\gamma);w_0,\gamma)=w(S(w_0,\gamma);w_0,\gamma)=\ell.
\]
\item
For any $w_0<\ell$, there exists a unique finite number $\gamma_1^{\star}(w_0)$ with
$\gamma_1^{\star}(w_0)\in(\gamma_1(w_0),\bar{\gamma}(w_0))$ such that
\begin{equation}
\label{eq:f1=L}
f_1(w_0,\gamma_1^{\star}(w_0))=L,
\end{equation}
where $f_1(w_0,\gamma):=\int_0^{\infty}(w(x;w_0,\gamma)-\ell)^+\,\mathrm{d}x$
is strictly increasing in $\gamma\in\mathbb{R}$.
Furthermore, $\gamma_1^{\star}(w_0)$ is continuous and strictly decreasing in $w_0\in(-\infty,\ell)$.
\end{enumerate}
\end{lemma}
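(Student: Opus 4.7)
The plan is to parametrize the problem by the peak value of the unimodal solution produced by Lemma~\ref{lem:w-property}~\eqref{solution_x_3}, and then reduce both existence claims to intermediate-value arguments driven by the strict $\gamma$-monotonicity in Lemma~\ref{lem:w-property-1}~\eqref{item-c-lemma-w}. Throughout I fix $w_0 < \ell$ and confine $\gamma$ to $(\pi(w_0), \bar{\gamma}(w_0))$, the regime in which $w(\cdot; w_0, \gamma)$ is strictly increasing on $[0, x^{\star}(w_0, \gamma)]$, strictly decreasing on $[x^{\star}(w_0, \gamma), \infty)$, and tends to $-\infty$.

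For part~$(a)$, I will introduce the peak function
\[
P(w_0, \gamma) := w\bigl(x^{\star}(w_0, \gamma);\, w_0, \gamma\bigr) = \max_{x \ge 0} w(x; w_0, \gamma),
\]
and show it is continuous and strictly increasing on $(\pi(w_0), \bar{\gamma}(w_0))$ with $P(w_0, \gamma) \to w_0$ as $\gamma \searrow \pi(w_0)$ and $P(w_0, \gamma) \to \infty$ as $\gamma \nearrow \bar{\gamma}(w_0)$. Strict monotonicity follows from Lemma~\ref{lem:w-property-1}~\eqref{item-c-lemma-w} via the chain $P(w_0, \gamma_1) = w(x^{\star}(w_0, \gamma_1); w_0, \gamma_1) < w(x^{\star}(w_0, \gamma_1); w_0, \gamma_2) \le P(w_0, \gamma_2)$ for $\gamma_1 < \gamma_2$, while continuity comes from Lemma~\ref{lem:w-existence}~\eqref{item-b-lemma-w}. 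The lower-endpoint limit holds because the $\gamma = \pi(w_0)$ profile is weakly decreasing by Lemma~\ref{lem:w-property}~\eqref{solution_x_1} so its maximum is $w_0$. At the upper endpoint, if $\bar{\gamma}(w_0) = \infty$ I fix any $x > 0$ and use $P(w_0, \gamma) \ge w(x; w_0, \gamma) \to \infty$ from Lemma~\ref{lem:w-property-1}~\eqref{item-c-lemma-w}; if $\bar{\gamma}(w_0) < \infty$ I use Lemma~\ref{lem:w-property}~\eqref{solution_x_2} to pick $x_N$ with $w(x_N; w_0, \bar{\gamma}(w_0)) > N + 1$ and then appeal to continuity in $\gamma$ to force $P(w_0, \gamma) > N$ for $\gamma$ close to $\bar{\gamma}(w_0)$. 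I then define $\gamma_1(w_0)$ as the unique finite root of $P(w_0, \cdot) = \ell$; for every $\gamma > \gamma_1(w_0)$, unimodality and the intermediate value theorem applied separately on $[0, x^{\star}]$ and $[x^{\star}, \infty)$ produce the unique pair $Q(w_0, \gamma) < x^{\star}(w_0, \gamma) < S(w_0, \gamma)$ with $w(Q) = w(S) = \ell$.

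For part~$(b)$, on $(\gamma_1(w_0), \bar{\gamma}(w_0))$ the positive-part integrand is supported exactly on $(Q(w_0, \gamma), S(w_0, \gamma))$, so
\[
f_1(w_0, \gamma) = \int_{Q(w_0, \gamma)}^{S(w_0, \gamma)} \bigl(w(x; w_0, \gamma) - \ell\bigr)\,\mathrm{d}x,
\]
while $f_1 \equiv 0$ on $(-\infty, \gamma_1(w_0)]$. Strict $\gamma$-monotonicity of $f_1$ on $(\gamma_1(w_0), \bar{\gamma}(w_0))$ is immediate from the pointwise strict monotonicity in Lemma~\ref{lem:w-property-1}~\eqref{item-c-lemma-w} applied under the integral over a nondegenerate support, and continuity in $\gamma$ comes from dominated convergence using Lemma~\ref{lem:w-existence}~\eqref{item-b-lemma-w} together with an exponential tail bound extracted from \eqref{equ:inequality_w_x_another} to supply an integrable majorant on compact $\gamma$-subintervals. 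The limit $f_1 \to 0$ as $\gamma \searrow \gamma_1(w_0)$ follows from monotone convergence to a pointwise-zero limit (the peak degenerates to $\ell$), while $f_1 \to \infty$ as $\gamma \nearrow \bar{\gamma}(w_0)$ follows from monotone convergence with pointwise limit $(w(x; w_0, \bar{\gamma}(w_0)) - \ell)^+$, which is non-integrable by Lemma~\ref{lem:w-property}~\eqref{solution_x_2}. Strict monotonicity together with the intermediate value theorem then yield the unique $\gamma_1^{\star}(w_0) \in (\gamma_1(w_0), \bar{\gamma}(w_0))$ with $f_1(w_0, \gamma_1^{\star}(w_0)) = L$.

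Finally, for continuity and strict decrease of $\gamma_1^{\star}$ in $w_0 \in (-\infty, \ell)$, I note that Lemma~\ref{lem:w-property-1}~\eqref{item-d-lemma-w} gives strict pointwise monotonicity of $w$ in $w_0$, so $f_1$ is jointly continuous and strictly increasing in each of $w_0$ and $\gamma$ on its defining region; the identity $f_1(w_0, \gamma_1^{\star}(w_0)) = L$ combined with strict monotonicity of $f_1$ in both arguments then forces $\gamma_1^{\star}$ to be continuous and strictly decreasing in $w_0$ by a standard implicit-function-type argument. The main technical hurdle I anticipate is the case $\bar{\gamma}(w_0) < \infty$, where I must convert the qualitative content of Lemma~\ref{lem:w-property}~\eqref{solution_x_2} (the boundary profile is strictly increasing with $\lim_{x \to \infty} w = \infty$) into quantitative blow-up of $P$ and $f_1$ as $\gamma \nearrow \bar{\gamma}(w_0)$; beyond that, the proof reduces cleanly to monotonicity and intermediate-value manipulations inherited from Lemmas~\ref{lem:w-property-1}--\ref{lem:w-property}.
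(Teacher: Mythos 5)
Your proposal is correct and follows essentially the same route as the paper: your peak function $P(w_0,\gamma)$ is exactly the quantity $w(x^{\star}(w_0,\gamma);w_0,\gamma)=\max_{x\ge 0}w(x;w_0,\gamma)$ that the paper uses to define $\gamma_1(w_0)$, and the rest (intermediate value theorem on the two monotone branches for $Q$ and $S$, the representation of $f_1$ as an integral over $(Q,S)$ with strict monotonicity and endpoint limits $0$ and $\infty$, and the implicit-function argument for continuity and strict decrease of $\gamma_1^{\star}$ in $w_0$) matches the paper's argument step for step. The only cosmetic difference is that you handle the blow-up of $P$ and $f_1$ as $\gamma\uparrow\bar{\gamma}(w_0)$ slightly more explicitly than the paper does.
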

\begin{remark}
Letting $Q(w_0)=Q(w_0,\gamma_1^{\star}(w_0))$ and $S(w_0)=S(w_0,\gamma_1^{\star}(w_0))$,
Lemma \ref{lem:gamma1} implies \eqref{eq:w=w=l} and \eqref{eq:int=L-gamma-1}.
\end{remark}
\begin{proof}
(a) For any $w_0<\ell$, define
\[
\gamma_1(w_0)=\inf\{\gamma\in (\pi(w_0),\bar{\gamma}(w_0)): w(x^{\star}(w_0,\gamma); w_0,\gamma)\geq \ell\}.
\]
It follows from Lemma \ref{lem:w-property} \eqref{solution_x_3} and Remark \ref{rem:x-star} that
\begin{equation}
\label{equ:w_x1}
w(x^{\star}(w_0,\gamma); w_0,\gamma)=\max_{x\geq 0}w(x; w_0,\gamma),
\end{equation}
which, together with Lemma \ref{lem:w-property-1} \eqref{item-c-lemma-w} and \eqref{item-d-lemma-w},
implies that $w(x^{\star}(w_0,\gamma); w_0,\gamma)$ is strictly increasing in
both $\gamma\in(\pi(w_0),\bar{\gamma}(w_0))$ and $w_0\in(-\infty,\ell)$.
Furthermore, Lemma \ref{lem:w-property} implies that
\[
\lim_{\gamma\downarrow\pi(w_0)}w(x^{\star}(w_0,\gamma); w_0,\gamma)=w_0<\ell\quad \text{and}\quad
\lim_{\gamma\uparrow\bar{\gamma}(w_0)}w(x^{\star}(w_0,\gamma); w_0,\gamma)=\infty.
\]
Hence, $\gamma_1(w_0)$ is well defined, finite, and strictly decreasing in $w_0\in(-\infty,\ell)$.
Using the continuity of $w(x;w_0,\gamma)$ in $\gamma$, \eqref{equ:w_x1}, the definition of $\gamma_1(w_0)$, and the monotonicity of $w(x^{\star}(w_0,\gamma); w_0,\gamma)$ in $\gamma$, we also have
\begin{align*}
&w(x^{\star}(w_0,\gamma_1(w_0)); w_0,\gamma_1(w_0))=\ell\quad\text{and}\\
&w(x^{\star}(w_0,\gamma_1(w_0)); w_0,\gamma)>\ell \text{ for $\gamma\in(\gamma_1(w_0),\bar{\gamma}(w_0))$}.
\end{align*}
For $\gamma\in(\gamma_1(w_0),\bar{\gamma}(w_0))$, define
\[
Q(w_0,\gamma)=\inf\{x\geq 0: w(x; w_0,\gamma)=\ell\} \  \text{and}\ S(w_0,\gamma)=\sup\{x\geq 0: w(x; w_0,\gamma)=\ell\}.
\]
Then, it follows from Lemma~\ref{lem:w-property}~($\ref{solution_x_3}$) and $w_0<\ell$ that both $Q(w_0,\gamma)$ and $S(w_0,\gamma)$ are well defined, finite, and unique,
and so $0<Q(w_0,\gamma)<x^{\star}(w_0,\gamma)<S(w_0,\gamma)$ and $w(Q(w_0,\gamma); w_0,\gamma)=w(S(w_0,\gamma); w_0,\gamma)=\ell$.

(b) Note that $f_1(w_0,\gamma):=\int_0^{\infty}(w(x;w_0,\gamma)-\ell)^+\,\mathrm{d}x$.
Hence, Lemma \ref{lem:w-property-1} \eqref{item-c-lemma-w}, Lemma \ref{lem:w-property},
and the definitions of $Q(w_0,\gamma)$ and $S(w_0,\gamma)$ imply that
\[
f_1(w_0,\gamma)=
\begin{cases}
0 &\text {for $\gamma\in(-\infty, \gamma_1(w_0)]$},\\
\int_{Q(w_0,\gamma)}^{S(w_0,\gamma)}[w(x; w_0,\gamma)-\ell]\, \mathrm{d}x
& \text{for $\gamma\in(\gamma_1(w_0),\bar{\gamma}(w_0))$},\\
\infty & \text{for $\gamma\in[\bar{\gamma}(w_0),\infty)$},
\end{cases}
\]
and $f_1(w_0,\gamma)$ is strictly increasing in $\gamma\in(\gamma_1(w_0),\bar{\gamma}(w_0))$.
Thus, we have
\[
\lim_{\gamma\downarrow\gamma_1(w_0)}f_1(w_0,\gamma)=0\quad\text{and}\quad
\lim_{\gamma\uparrow\bar{\gamma}(w_0)}f_1(w_0,\gamma)=\infty,
\]
which, together with the continuity of $f_1(w_0,\gamma)$
in $\gamma\in(\gamma_1(w_0),\bar{\gamma}(w_0))$ (the continuity of $f_1(w_0,\gamma)$
can be implied by the continuity of $w(x;w_0,\gamma)$),
imply that there exists a unique $\gamma_1^{\star}(w_0)\in(\gamma_1(w_0),\bar{\gamma}(w_0))$
such that $f_1(w_0,\gamma_1^{\star}(w_0))=L$.

It remains to prove that  $\gamma_1^{\star}(w_0)$ is continuous and strictly decreasing in $w_0\in(-\infty,w_0)$.
First, it follows from Lemma~\ref{lem:w-property-1}~\eqref{item-d-lemma-w}
 and the definition of $f_1(w_0,\gamma)$ that $f_1(w_0, \gamma)$ is strictly increasing in
 $w_0\in(-\infty,\ell)$ when $\gamma\in (\gamma_1(w_0), \bar{\gamma}(w_0))$.
Also, $f_1(w_0, \gamma)$ is strictly increasing in $\gamma\in(\gamma_1(w_0),\bar{\gamma}(w_0))$.
Hence, $\gamma_1^{\star}(w_0)$ is strictly decreasing in $w_0\in(-\infty,\ell)$ as can be seen by noting that $\gamma_1^{\star}(w_0)\in(\gamma_1(w_0),\bar{\gamma}(w_0))$.
Finally, the continuity of $\gamma_1^{\star}(w_0)$ follows from the monotonicity of $f_1(w_0,\gamma)$
in $\gamma\in(\gamma_1(w_0),\bar{\gamma}(w_0))$, the continuity of $f_1(w_0,\gamma)$ in $w_0\in(-\infty,\ell)$, and the Implicit Function Theorem (see e.g., Theorem 1.1 in \cite{Kumagai1980JOTA}).
\end{proof}

\begin{lemma}
\label{lem:gamma2}
\begin{enumerate}[$(a)$]
\item
\label{item-a-gamma2}
For any $w_0<-k$, there exists a unique number $\gamma_2(w_0)$ with
$\gamma_2(w_0)\in(\pi(w_0),\gamma_1(w_0))$ such that
\begin{equation}
\label{eq:w=-k-lemma}
w(x^{\star}(w_0,\gamma_2(w_0)), w_0,\gamma_2(w_0))=-k,
\end{equation}
and for any $\gamma>\gamma_2(w_0)$,
there exists a unique number $q(w_0,\gamma)$ with $0<q(w_0,\gamma)<x^{\star}(w_0,\gamma) $ such that
\[
w(q(w_0,\gamma);w_0,\gamma)=-k.
\]
\item
\label{item-b-gamma2}
There exists a number $\bar{w}_0<-k$ such that
\begin{equation}
\label{eq:f2=-K-bar-w}
f_2(\bar{w}_0,\gamma_2(\bar{w}_0))=-K,
\end{equation}
and for any $w_0<\bar{w}_0$, there exists a unique number
$\gamma_2^{\star}(w_0)$ satisfying $\gamma_2^{\star}(w_0)>\gamma_2(w_0)$ and
\begin{equation}
\label{eq:f2=-K}
f_2(w_0,\gamma_2^{\star}(w_0))=-K,
\end{equation}
where $f_2(w_0,\gamma):=\int_0^{q(w_0,\gamma)}[w(x;w_0,\gamma)+k]\,\mathrm{d}x$.
Furthermore, $\gamma_2^{\star}(w_0)$ is continuous and strictly decreasing in
$w_0\in(-\infty,\bar{w}_0)$, and
\begin{equation}
\label{eq:lim-gamma=infty}
\lim_{w_0\to-\infty}\gamma_2^{\star}(w_0)=\infty.
\end{equation}
\end{enumerate}
\end{lemma}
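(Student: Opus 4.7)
The plan for part (a) is to mirror the strategy used for Lemma \ref{lem:gamma1}(a) but with the target level $\ell$ replaced by $-k$. Set $M(w_0,\gamma):=w(x^\star(w_0,\gamma);w_0,\gamma)=\max_{x\ge 0}w(x;w_0,\gamma)$ for $\gamma\in(\pi(w_0),\bar\gamma(w_0))$. From the proof of Lemma \ref{lem:gamma1}(a), $M(w_0,\cdot)$ is continuous and strictly increasing on this interval, with $M(w_0,\pi(w_0)^+)=w_0<-k$ and $M(w_0,\gamma_1(w_0))=\ell>-k$; the intermediate value theorem therefore produces a unique $\gamma_2(w_0)\in(\pi(w_0),\gamma_1(w_0))$ satisfying \eqref{eq:w=-k-lemma}. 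For any $\gamma>\gamma_2(w_0)$ one has $w(0;w_0,\gamma)=w_0<-k<M(w_0,\gamma)$, and $w(\cdot;w_0,\gamma)$ is strictly increasing on $[0,x^\star(w_0,\gamma)]$ by Lemma \ref{lem:w-property}(c), so one more application of IVT yields the unique $q(w_0,\gamma)\in(0,x^\star(w_0,\gamma))$ with $w(q(w_0,\gamma);w_0,\gamma)=-k$.

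For part (b), I will first establish two monotonicity facts about $f_2$ on the domain $\{(w_0,\gamma):w_0<-k,\ \gamma\ge\gamma_2(w_0)\}$. For fixed $w_0$, $f_2(w_0,\cdot)$ is continuous and strictly increasing in $\gamma$, with boundary value $g(w_0):=f_2(w_0,\gamma_2(w_0))=\int_{0}^{x^\star(w_0,\gamma_2(w_0))}[w(x;w_0,\gamma_2(w_0))+k]\,\mathrm{d}x<0$ and limit $0^-$ as $\gamma\to\infty$. Strict monotonicity is verified by decomposing $f_2(w_0,\gamma'')-f_2(w_0,\gamma')$ (with $\gamma'<\gamma''$, hence $q(w_0,\gamma'')<q(w_0,\gamma')$) into $\int_0^{q(w_0,\gamma'')}[w(x;w_0,\gamma'')-w(x;w_0,\gamma')]\,\mathrm{d}x$ (positive by Lemma \ref{lem:w-property-1}(a)) and $-\int_{q(w_0,\gamma'')}^{q(w_0,\gamma')}[w(x;w_0,\gamma')+k]\,\mathrm{d}x$ (positive because $w(\cdot;w_0,\gamma')\le -k$ on that sub-interval). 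The limit $0^-$ follows from $w'(0;w_0,\gamma)=(2/\sigma^2)(\gamma-\pi(w_0))\to\infty$, which forces $q(w_0,\gamma)\to 0$. An analogous two-piece comparison, using Lemma \ref{lem:w-property-1}(b) this time, shows that for fixed $\gamma$ the map $w_0\mapsto f_2(w_0,\gamma)$ is strictly increasing.

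Next I analyse $g(w_0)$. Continuity follows from the continuity of $w$ together with continuity of $\gamma_2(w_0)$, the latter being obtained from a monotone-inverse argument applied to $M(w_0,\gamma_2(w_0))=-k$ (using the joint continuity and strict monotonicity of $M$ in $\gamma$). The limit $\lim_{w_0\uparrow -k}g(w_0)=0$ is immediate because $x^\star(w_0,\gamma_2(w_0))\to 0$ while the integrand stays uniformly bounded. The main technical hurdle is $\lim_{w_0\to-\infty}g(w_0)=-\infty$: I plan to start from the elementary bound $g(w_0)\le (w_0+k)\,x^\star(w_0,\gamma_2(w_0))$ (which comes from $w+k\ge w_0+k$ on $[0,x^\star]$) and then show that $|w_0|\,x^\star(w_0,\gamma_2(w_0))$ diverges. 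This in turn I will obtain from the identity $\gamma_2(w_0)=\pi(-k)+h(x^\star(w_0,\gamma_2(w_0)))$ (a consequence of $w'(x^\star)=0$) combined with a careful ODE analysis that uses the Lipschitz and asymptotic behaviour of $\pi$ together with Assumption \ref{assum-h}; this is where the bulk of the real work sits. Continuity of $g$ and these two limits then produce via IVT some $\bar w_0\in(-\infty,-k)$ with $g(\bar w_0)=-K$, and I take $\bar w_0:=\sup\{w_0<-k:g(w_0')\le -K\text{ for all }w_0'\le w_0\}$ so that $g(w_0)\le -K$ for all $w_0\le\bar w_0$ by construction.

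For $w_0<\bar w_0$, continuity and strict monotonicity of $f_2(w_0,\cdot)$ together with $g(w_0)\le -K<0$ yield by IVT the unique $\gamma_2^\star(w_0)\in(\gamma_2(w_0),\infty)$ with $f_2(w_0,\gamma_2^\star(w_0))=-K$, that is, \eqref{eq:f2=-K}. Continuity and strict decrease of $\gamma_2^\star$ in $w_0$ are then standard consequences of the two monotonicities: raising $w_0$ strictly increases $f_2$, so restoring the constraint $f_2=-K$ requires strictly lowering $\gamma$. For \eqref{eq:lim-gamma=infty} I argue by contradiction: if $\gamma_2^\star(w_0)\le\Gamma$ along some sequence $w_0\to-\infty$, then $-K=f_2(w_0,\gamma_2^\star(w_0))\le f_2(w_0,\Gamma)$, while the same ODE mechanism used to prove $g(w_0)\to-\infty$ also gives $f_2(w_0,\Gamma)\to-\infty$, a contradiction. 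As already flagged, the hard part throughout is the limit $g(w_0)\to-\infty$, which requires controlling how the triple $(w_0,\gamma_2(w_0),x^\star(w_0,\gamma_2(w_0)))$ scales jointly when $w_0$ is very negative.
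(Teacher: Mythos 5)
Part (a) and the monotonicity framework for part (b) (the two-piece decomposition showing $f_2$ strictly increasing in $\gamma$, the IVT constructions, and the derivation of monotonicity of $\gamma_2^\star$ from the two monotonicities of $f_2$) match the paper's argument. But the step you yourself flag as the main technical hurdle --- $\lim_{w_0\to-\infty}f_2(w_0,\gamma_2(w_0))=-\infty$ --- is attacked through a reversed inequality. On $[0,x^\star(w_0,\gamma_2(w_0))]$ the solution increases from $w_0$ to $-k$, so $w(x)+k\geq w_0+k$ gives
\[
g(w_0)=\int_0^{x^\star}\bigl[w(x;w_0,\gamma_2(w_0))+k\bigr]\,\mathrm{d}x\ \geq\ (w_0+k)\,x^\star(w_0,\gamma_2(w_0)),
\]
i.e.\ $(w_0+k)x^\star$ is a \emph{lower} bound for $g(w_0)$, not the upper bound you assert. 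Showing that $\abs{w_0}\,x^\star$ diverges therefore proves nothing about $g$ (indeed $x^\star\to\infty$ on its own, yet the integrand could in principle concentrate near $-k$). The paper's actual mechanism is different: it first shows $\gamma_2(w_0)\to\infty$ (hence, via the identity $\pi(-k)+h(q)=\gamma_2(w_0)$ that you correctly identify, $q=x^\star\to\infty$), and then argues by contradiction --- if $f_2(w_0,\gamma_2(w_0))\geq\underline{f}$ for all $w_0$, then for a fixed $x_4^{\dag}>0$ one gets the $w_0$-free lower bound $w(x_4^{\dag};w_0,\gamma_2(w_0))\geq \underline{f}/x_4^{\dag}-k$, and a Gronwall estimate from $x_4^{\dag}$ to $x_5^{\dag}$ combined with $\gamma_2(w_0)\to\infty$ forces $w(x_5^{\dag};w_0,\gamma_2(w_0))\to\infty$, contradicting $w(x_5^{\dag})\leq-k$. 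Since your proof of \eqref{eq:lim-gamma=infty} also leans on "the same ODE mechanism," that step inherits the gap (although, once $\gamma_2(w_0)\to\infty$ is established, \eqref{eq:lim-gamma=infty} actually follows immediately from $\gamma_2^\star(w_0)>\gamma_2(w_0)$, which is simpler than the paper's separate contradiction argument).

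Two smaller points. First, your construction of $\bar{w}_0$ only yields $g(w_0)\leq -K$ for $w_0\leq\bar{w}_0$, which gives $\gamma_2^\star(w_0)\geq\gamma_2(w_0)$ but not the strict inequality required by the statement; the paper rules out equality by proving that $g(w_0)=f_2(w_0,\gamma_2(w_0))$ is \emph{strictly} increasing in $w_0$, and this does not follow "analogously" from the fixed-$\gamma$ monotonicity of $f_2$, because $\gamma_2(w_0)$ is decreasing in $w_0$ and the two effects oppose each other (the appendix handles this with a separate comparison argument for $q$ and $w$ along the curve $\gamma=\gamma_2(w_0)$). Second, $\lim_{\gamma\to\infty}f_2(w_0,\gamma)=0$ needs more than $w'(0;w_0,\gamma)\to\infty$; the clean route is $\lim_{\gamma\to\infty}w(x;w_0,\gamma)=\infty$ for each fixed $x>0$ (Lemma \ref{lem:w-property-1}\eqref{item-c-lemma-w}), which forces $q(w_0,\gamma)\to0$.
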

\begin{remark}
If we let $q(w_0)=q(w_0,\gamma_2^{\star}(w_0))$, then Lemma \ref{lem:gamma2} implies
\eqref{eq:int=-K-gamma-2}.
\end{remark}
\begin{proof}
($a$) For any $w_0<-k$, define
\begin{equation}
\label{eq:gamma2(w0)}
\gamma_2(w_0)
=\inf\{\gamma\in(\pi(w_0),\bar{\gamma}(w_0)): w(x^{\star}(w_0,\gamma); w_0,\gamma)\geq -k\}.
\end{equation}
Recall the fact that $w(x^{\star}(w_0,\gamma); w_0,\gamma)$ is strictly increasing in
$\gamma\in(\pi(w_0),\bar{\gamma}(w_0))$, tends to $\infty$ as $\gamma$ goes to
$\bar{\gamma}(w_0)$ (from the argument after \eqref{equ:w_x1}), and tends to $w_0<-k$ as $\gamma$ goes to $\pi(w_0)$
(see Lemma \ref{lem:w-property}).
Then, $\gamma_2(w_0)$ is well defined and unique, and satisfies \eqref{eq:w=-k-lemma}. Moreover, $\pi(w_0)<\gamma_2(w_0)<\gamma_1(w_0)$.
For $\gamma\in(\gamma_2(w_0),\infty)$,
define
\begin{equation}
\label{eq:q(w0,gamma)}
q(w_0,\gamma)=\inf\{x\geq 0: w(x; w_0,\gamma)=-k\}.
\end{equation}
Since $w(x^{\star}(w_0,\gamma); w_0,\gamma)>-k$ for
$\gamma\in(\gamma_2(w_0),\bar{\gamma}(w_0))$ and  $w(x^{\star}(w_0,\gamma); w_0,\gamma)=\infty$
for $\gamma\in[\bar{\gamma}(w_0),\infty)$ (see Remark \ref{rem:x-star}),
we have that $q(w_0,\gamma)$ is well defined and unique, and satisfies $0<q(w_0,\gamma)<x^{\star}(w_0,\gamma)$.

($b$)  We first prove \eqref{eq:f2=-K-bar-w}.
Recall $f_2(w_0,\gamma_2(w_0))=\int_0^{q(w_0,\gamma_2(w_0))}[w(x;w_0,\gamma_2(w_0))+k]\,\mathrm{d}x$.
If we can show that  $f_2(w_0,\gamma_2(w_0))$ is strictly increasing in $w_0\in(-\infty,-k)$,
\begin{equation}
\label{eq:limf2}
\lim_{w_0\uparrow-k}f_2(w_0,\gamma_2(w_0))=0,\quad\text{and}\quad
\lim_{w_0\downarrow-\infty}f_2(w_0,\gamma_2(w_0))=-\infty,
\end{equation}
then, by defining $\bar{w}_0$ as
\[
\bar{w}_0=\sup\{w_0\leq -k: f_2(w_0,\gamma_2(w_0))=-K\},
\]
the continuity of  $f_2(w_0,\gamma)$ implies that \eqref{eq:f2=-K-bar-w} holds. (the proof of the continuity of $f_2$ is similar to that for $f_1$, and thus is omitted.) The proofs that $f_2(w_0,\gamma_2(w_0))$ is strictly increasing in $w_0\in(-\infty,-k)$
and that \eqref{eq:limf2} holds are in Appendix \ref{app:auxiliary}.

We next prove that for any $w_0<\bar{w}_0$, there exists a unique number $\gamma^{\star}_2(w_0)$ with
$\gamma^{\star}_2(w_0)>\gamma_2(w_0)$ such that \eqref{eq:f2=-K} holds.
First since  $f_2(w_0,\gamma_2(w_0))$ is strictly increasing in $w_0\in(-\infty,-k)$, we have that
for $w_0<\bar{w}_0$,
\begin{equation}
\label{eq:f2<-K}
f_2(w_0,\gamma_2(w_0))<f_2(\bar{w}_0,\gamma_2(\bar{w}_0))=-K.
\end{equation}
Furthermore, \eqref{eq:lim-w=infty-gamma} and the definition of $q(w_0,\gamma)$
in \eqref{eq:q(w0,gamma)} imply that $\lim_{\gamma\to\infty}q(w_0,\gamma)=0$ and then
\[
\lim_{\gamma\to\infty}f_2(w_0,\gamma)=0,
\]
which, together with \eqref{eq:f2<-K} and the continuity of $f_2(w_0,\gamma)$ in $\gamma$,
implies that there exists a unique $\gamma^{\star}_2(w_0)$ with
$\gamma^{\star}_2(w_0)>\gamma_2(w_0)$ such that \eqref{eq:f2=-K} holds.

We now prove that $\gamma_2^{\star}(w_0)$ is continuous and strictly decreasing in
$w_0\in(-\infty,\bar{w}_0)$.
Similar to the proof used for $\gamma_1^{\star}(w_0)$, the continuity can be obtained by the Implicit Function Theorem and is omitted.
If we can prove that $f_2(w_0,\gamma)$ is strictly increasing in $w_0\in(-\infty,\bar{w}_0)$
and $\gamma\in(\gamma_2(w_0),\infty)$ respectively, then $f_2(w_0,\gamma_2^{\star}(w_0))=-K$
implies that $\gamma_2^{\star}(w_0)$ is strictly decreasing in $w_0\in(-\infty,\bar{w}_0)$.
We next only prove  that $f_2(w_0,\gamma)$ is strictly increasing in $\gamma\in(\gamma_2(w_0),\infty)$,
and the proof for the monotonicity in $w_0\in(-\infty,\bar{w}_0)$ is very similar and is omitted.
Since $w(x;w_0,\gamma)$ is strictly increasing in $\gamma\in\mathbb{R}$ (see Lemma \ref{lem:w-property-1} \eqref{item-c-lemma-w}) and $w(x;w_0,\gamma)$ is strictly increasing in
$x\in(0,x^{\star}(w_0,\gamma))$
(see Lemma \ref{lem:w-property} \eqref{solution_x_2} and  \eqref{solution_x_3}),
$w(q(w_0,\gamma);w_0,\gamma)=-k$ implies that $q(w_0,\gamma)$ is strictly decreasing in
$\gamma\in(\gamma_2(w_0),\infty)$.
Therefore,
for any $\gamma^{\dag}$ and $\gamma^{\ddag}$ satisfying
$\gamma_2(w_0)\leq \gamma^{\dag}<\gamma^{\ddag}$,
we have
\begin{align*}
f_2(w_0,\gamma^{\dag})
&=\int_{0}^{q(w_0,\gamma^{\dag})}(w(x; w_0,\gamma^{\dag})+k)\, \mathrm{d}x
\leq \int_{0}^{q(w_0,\gamma^{\ddag})}(w(x; w_0,\gamma^{\dag})+k)\, \mathrm{d}x\\
&<\int_{0}^{q(w_0,\gamma^{\ddag})}(w(x; w_0,\gamma^{\ddag})+k)\, \mathrm{d}x
=f_2(w_0,\gamma^{\ddag}),
\end{align*}
where the first inequality follows from the fact that $q(w_0,\gamma)$ is strictly decreasing in
$\gamma\in(\gamma_2(w_0),\infty)$ and  $w(x;w_0,\gamma^{\dag})\leq -k$ for
$x\leq q(w_0,\gamma^{\dag})$, and the second inequality follows from
$w(x; w_0,\gamma)$ being strictly increasing in $\gamma\in\mathbb{R}$
(Lemma \ref{lem:w-property-1} \eqref{item-c-lemma-w})
and $q(w_0,\gamma^{\ddag})>0$.
Hence, $f_2(w_0,\gamma)$ is strictly increasing in $\gamma\in(\gamma_2(w_0),\infty)$.

We now prove \eqref{eq:lim-gamma=infty}. Suppose that it fails to hold. In that case, the fact that $\gamma_2^{\star}(w_0)$ is strictly decreasing in $w_0\in(-\infty,\bar{w}_0)$
implies that there exists a finite number $\bar{\gamma}_2^{\dag}$ such that
\[
\gamma_2^{\star}(w_0)\leq\bar{\gamma}_2^{\dag}
\quad\text{for all $w_0<\bar{w}_0$}.
\]
Since $w(x; w_0, \gamma_2^{\star}(w_0))\leq -k<0$ for $x\in [0, q(w_0, \gamma_2^{\star}(w_0))]$,
by the definition of $\pi(w)$, we have
\[
\pi(w(x; w_0, \gamma_2^{\star}(w_0)))
\geq \bar{\mu}w(x; w_0, \gamma_2^{\star}(w_0))+\underline{c}\quad
\text{for $x\in [0, q(w_0, \gamma_2^{\star}(w_0))]$},
\]
where $\underline{c}=\min_{\mu\in\mathcal{U}}c(\mu)$.
It follows from \eqref{equ:ode_new} that for $0\leq x\leq q(w_0, \gamma_2^{\star}(w_0))$,
\[
\frac{1}{2}\sigma^2w'(x; w_0,\gamma)+\bar{\mu}w(x; w_0,\gamma)+\underline{c}+h(x)
\leq \gamma,
\]
which yields that for $x\in [0, q(w_0, \gamma_2^{\star}(w_0))]$,
\begin{equation}
\label{equ:w_inequality_small}
w(x;w_0,\gamma)
\leq w_0e^{-\eta x}+\frac{2}{\sigma^2}\int_0^x\big[\gamma-\underline{c}-h(y)\big]e^{-\eta(x-y)}\, \mathrm{d}y,
\end{equation}
where $\eta=2\bar{\mu}/\sigma^2$.
Thus, letting $x=q(w_0, \gamma_2^{\star}(w_0))$ and $\gamma=\gamma_2^{\star}(w_0)$ in \eqref{equ:w_inequality_small} and using $w(q(w_0, \gamma_2^{\star}(w_0)); w_0, \gamma_2^{\star}(w_0))=-k$
and $\gamma_2^{\star}(w_0)\leq \bar{\gamma}_2^{\dag}$, we have that for all $w_0<\bar{w}_0$,
\begin{equation}
\label{eq:g>-k}
w_0e^{-\eta q(w_0, \gamma_2^{\star}(w_0))}+\frac{2}{\sigma^2}\int_0^{q(w_0, \gamma_2^{\star}(w_0))}\big[\bar{\gamma}_2^{\dag}-\underline{c}-h(y)\big]e^{-\eta(q(w_0, \gamma_2^{\star}(w_0))-y)}\, \mathrm{d}y
\geq -k.
\end{equation}
 Define
$g(x;w_0)=w_0e^{-\eta x}+\frac{2}{\sigma^2}\int_0^{x}[\bar{\gamma}_2^{\dag}-\underline{c}-h(y)]e^{-\eta(x-y)}\, \mathrm{d}y$.
Then for all $w_0<\bar{w}_0$,
\begin{equation}
\label{eq:g>-k-2}
g(q(w_0, \gamma_2^{\star}(w_0));\bar{w}_0)>g(q(w_0, \gamma_2^{\star}(w_0));w_0)\geq-k,
\end{equation}
where the last inequality follows from \eqref{eq:g>-k}.
Let
\[
\underline{q}=\inf\{x>0: g(x; \bar{w}_0)\geq -k\},
\]
and then \eqref{eq:g>-k-2} and $g(0;\bar{w}_0)=\bar{w}_0<-k$ imply that for all  $w_0<\bar{w}_0$,
\begin{equation}
\label{eq:0<underline-q}
0<\underline{q}<q(w_0, \gamma_2^{\star}(w_0)).
\end{equation}
Therefore,
\begin{align*}
f_2(w_0, \gamma^{\star}_2(w_0))
&=\int_{0}^{q(w_0,\gamma^{\star}_2(w_0))}
\big[w(x; w_0,\gamma^{\star}_2(w_0))+k\big]
\, \mathrm{d}x\\
&\leq\int_{0}^{\underline{q}}\big[w(x; w_0,\gamma^{\star}_2(w_0))+k\big]\, \mathrm{d}x\\
&\leq \int_{0}^{\underline{q}}\Big[ w_0e^{-\eta x}
+\frac{2}{\sigma^2}\int_0^x\big[\gamma^{\star}_2(w_0)-\underline{c}-h(y)\big]e^{-\eta(x-y)}dy+k\Big]\, \mathrm{d}x\\
&\leq \int_{0}^{\underline{q}}\big[g(x;w_0)+k\big]\, \mathrm{d}x,
\end{align*}
where the first inequality follows from \eqref{eq:0<underline-q}
and $w(x; w_0,\gamma^{\star}_2(w_0))+k\leq 0$ for $0\leq x\leq q(w_0, \gamma_2^{\star}(w_0))$,
the second inequality follows from \eqref{equ:w_inequality_small},
and the last inequality follows from $\gamma_2^{\star}(w_0)\leq \bar{\gamma}_2^{\dag}$.
Then, it follows from $\lim_{w_0\to-\infty}g(x;w_0)=-\infty$ for all $x\in[0,\underline{q}]$ that
\[
\lim_{w_0\to-\infty}f_2(w_0, \gamma^{\star}_2(w_0))=-\infty,
\]
which contradicts $f_2(w_0, \gamma^{\star}_2(w_0))=-K$.
Thus, we have proven \eqref{eq:lim-gamma=infty} and finished the proof.
\end{proof}

\begin{lemma}
\label{lem:gamma1=gamma2}
There exists a number $w_0^{\star}$ with $w_0^{\star}<\bar{w}_0$ such that \eqref{eq:gamma1=gamma2} holds.
\end{lemma}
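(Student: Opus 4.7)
The plan is to apply the Intermediate Value Theorem to
\[
g(w_0) := \gamma_1^{\star}(w_0)-\gamma_2^{\star}(w_0),\qquad w_0\in(-\infty,\bar{w}_0),
\]
which is continuous by the continuity statements in Lemmas \ref{lem:gamma1}(b) and \ref{lem:gamma2}(b). Thus it suffices to exhibit one point where $g>0$ and another where $g<0$.

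For the sign near the right endpoint, I would argue that $\lim_{w_0\uparrow\bar{w}_0}g(w_0)>0$. Since $f_2$ is jointly continuous and $f_2(\bar{w}_0,\gamma_2(\bar{w}_0))=-K$ by \eqref{eq:f2=-K-bar-w}, while $f_2(w_0,\gamma_2^{\star}(w_0))=-K$ for $w_0<\bar{w}_0$ with $\gamma_2^{\star}(w_0)>\gamma_2(w_0)$ unique, the strict monotonicity of $f_2(w_0,\cdot)$ forces $\gamma_2^{\star}(w_0)\downarrow\gamma_2(\bar{w}_0)$ as $w_0\uparrow\bar{w}_0$. By continuity of $\gamma_1^{\star}$,
\[
\lim_{w_0\uparrow\bar{w}_0} g(w_0)=\gamma_1^{\star}(\bar{w}_0)-\gamma_2(\bar{w}_0)>\gamma_1(\bar{w}_0)-\gamma_2(\bar{w}_0)>0,
\]
where the first strict inequality uses $\gamma_1^{\star}(w_0)\in(\gamma_1(w_0),\bar{\gamma}(w_0))$ from Lemma \ref{lem:gamma1}(b), and the second uses the inequality $\gamma_2(w_0)<\gamma_1(w_0)$ noted at the end of the proof of Lemma \ref{lem:gamma2}(a).

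For the sign at the far left, the plan is to show $g(w_0^{\dag})<0$ for some sufficiently negative $w_0^{\dag}$. Since $f_1(w_0,\cdot)$ is strictly increasing with $f_1(w_0,\gamma_1^{\star}(w_0))=L$, this reduces to proving
\[
f_1(w_0,\gamma_2^{\star}(w_0))>L \quad\text{for some $w_0$ sufficiently negative.}
\]
The strategy is to exploit \eqref{eq:lim-gamma=infty}: as $w_0\to-\infty$, $\gamma_2^{\star}(w_0)\to\infty$, so by the monotonicity of $w(x^{\star}(w_0,\gamma);w_0,\gamma)$ in $\gamma$ and $w_0$ recorded in the proof of Lemma \ref{lem:gamma1}(a), the maximum value $w(x^{\star}(w_0,\gamma_2^{\star}(w_0));w_0,\gamma_2^{\star}(w_0))$ also tends to infinity. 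A parabolic expansion of $w$ at $x^{\star}(w_0,\gamma_2^{\star}(w_0))$, using $w''=-2h'(x^{\star})/\sigma^2<0$ from differentiating \eqref{equ:ode_new}, together with the Lipschitz bound \eqref{equ:Lipschitz_pi}, gives a lower bound for the width $S(w_0,\gamma_2^{\star}(w_0))-Q(w_0,\gamma_2^{\star}(w_0))$ that does not collapse. Combining height-to-infinity with a non-vanishing width yields $f_1(w_0,\gamma_2^{\star}(w_0))\to\infty$ as $w_0\to-\infty$, which delivers the desired $w_0^{\dag}$.

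Combining the two sign assertions and continuity of $g$, the Intermediate Value Theorem produces $w_0^{\star}\in(w_0^{\dag},\bar{w}_0)$ with $g(w_0^{\star})=0$, which is \eqref{eq:gamma1=gamma2}. The main obstacle is the second step: since the ODE \eqref{equ:ode_new} has no explicit solution, the growth $f_1(w_0,\gamma_2^{\star}(w_0))\to\infty$ must be obtained by combining the $a~priori$ estimates \eqref{equ:inequality_w_x}--\eqref{equ:inequality_w_x_another} with the rate at which $\gamma_2^{\star}(w_0)$ blows up, which is itself controlled by the integral constraint \eqref{eq:f2=-K}; all remaining steps are direct consequences of continuity and the monotonicity lemmas already proved.
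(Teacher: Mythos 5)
Your overall architecture coincides with the paper's: apply the Intermediate Value Theorem to $\gamma_1^{\star}-\gamma_2^{\star}$ on $(-\infty,\bar w_0)$, checking one sign at (or near) $\bar w_0$ and the opposite sign for $w_0$ sufficiently negative. Your right-endpoint argument is sound and is essentially the paper's \eqref{eq:f1<f1} step in a slightly different guise: the chain $\gamma_2^{\star}(\bar w_0)=\gamma_2(\bar w_0)<\gamma_1(\bar w_0)<\gamma_1^{\star}(\bar w_0)$ follows from the interval memberships in Lemmas \ref{lem:gamma1} and \ref{lem:gamma2}. The problem is the left endpoint.

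The gap is your assertion that, because $\gamma_2^{\star}(w_0)\to\infty$ and $w(x^{\star}(w_0,\gamma);w_0,\gamma)$ is increasing in both $\gamma$ and $w_0$, the peak value $w(x^{\star}(w_0,\gamma_2^{\star}(w_0));w_0,\gamma_2^{\star}(w_0))$ tends to $+\infty$. This does not follow: along the diagonal $w_0\to-\infty$, $\gamma=\gamma_2^{\star}(w_0)$, the two monotonicities pull in opposite directions ($\gamma\uparrow\infty$ pushes the peak up while $w_0\downarrow-\infty$ pushes it down), and the single-variable limit $\lim_{\gamma\uparrow\bar\gamma(w_0)}w(x^{\star}(w_0,\gamma);w_0,\gamma)=\infty$ recorded in the proof of Lemma \ref{lem:gamma1}\eqref{item-lemma-gamma1-a} is taken at fixed $w_0$ and says nothing about this joint limit. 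Resolving precisely this competition is the hard core of the paper's proof: it establishes \eqref{eq:lim-w=infty}, i.e.\ $w(x;w_0,\gamma_2^{\star}(w_0))\to\infty$ for each fixed $x>0$, via a quantitative comparison estimate in which the integral constraint $f_2(w_0,\gamma_2^{\star}(w_0))=-K$ is used twice — to bound $q(w_0,\gamma_2^{\star}(w_0))$ below by $-K/(w_0+k)$ and to bound $w(\hat x_1;w_0,\gamma_2^{\star}(w_0))$ below by $-K/\hat x_1-k$ — so that the negative initial data is controlled uniformly while the $\gamma_2^{\star}(w_0)$ contribution to the Gronwall integral blows up. Your proposal defers all of this to "a priori estimates combined with the rate at which $\gamma_2^{\star}$ blows up," which is exactly where the work lives. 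A second, smaller defect: the proposed parabolic expansion $w''(x^{\star})=-2h'(x^{\star})/\sigma^2$ presumes $h$ (and implicitly $\pi$) differentiable, which Assumption \ref{assum-h} and Lemma \ref{lem:property_pi_mu} do not grant; the paper sidesteps any width estimate at the peak by proving pointwise divergence at two fixed abscissae $x'<x''$, which delivers height and a non-collapsing width simultaneously.
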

\begin{proof}
We use two steps to prove this lemma.
First, we show that
\begin{equation}
\label{eq:gamma2<gamma1}
\gamma_2^{\star}(\bar{w}_0)<\gamma_1^{\star}(\bar{w}_0).
\end{equation}
Second, we prove that there exists a number $\underline{w}_0<\bar{w}_0$ such that
\begin{equation}
\label{eq:gamma2>gamma1}
\gamma_2^{\star}(\underline{w}_0)\geq\gamma_1^{\star}(\underline{w}_0).
\end{equation}
Thus, \eqref{eq:gamma2<gamma1}, \eqref{eq:gamma2>gamma1},
and the continuity of $\gamma_1^{\star}(w_0)$ and  $\gamma_2^{\star}(w_0)$
imply that there exists a number $w_0^{\star}$ such that \eqref{eq:gamma1=gamma2} holds.
We next prove \eqref{eq:gamma2<gamma1} and \eqref{eq:gamma2>gamma1}.

First, we prove \eqref{eq:gamma2<gamma1}.
It follows from \eqref{eq:w=-k-lemma} and $\bar{w}_0<-k$ that
\begin{equation}
\label{eq:w=-k<l}
w(x^{\star}(\bar{w}_0,\gamma_2^{\star}(\bar{w}_0));\bar{w}_0,\gamma_2^{\star}(\bar{w}_0))
=-k<\ell,
\end{equation}
which implies that $x^{\star}(\bar{w}_0,\gamma_2^{\star}(\bar{w}_0))$ is finite
and thus
 $\gamma_2^{\star}(\bar{w}_0)\in(\pi(w_0),\bar{\gamma}(w_0))$.
Thus,  we have
\begin{equation}
\label{eq:f1<f1}
f_1(\bar{w}_0,\gamma_2^{\star}(\bar{w}_0))=\int_0^{\infty}(w(x;\bar{w}_0,\gamma_2^{\star}(\bar{w}_0))-\ell)^+\,\mathrm{d}x=0<L=f_1(\bar{w}_0,\gamma_1^{\star}(\bar{w}_0)),
\end{equation}
where the second equality follows from Lemma \ref{lem:w-property}
\eqref{solution_x_3} and \eqref{eq:w=-k<l},
the last equality follows from \eqref{eq:f1=L} and $\bar{w}_0<-k<\ell$.
Recall that $f_1(w_0,\gamma)$ is strictly increasing in $\gamma$ in Lemma \ref{lem:gamma1}, and thus
\eqref{eq:f1<f1} yields \eqref{eq:gamma2<gamma1}.

Next we prove \eqref{eq:gamma2>gamma1}.
Suppose that it fails to hold, i.e., for all $w_0<\bar{w}_0$,
we have $\gamma_2^{\star}(w_0)<\gamma_1^{\star}(w_0)$.
Since $f_1(w_0,\gamma)$ is strictly increasing in $\gamma$,
we have that for any $w_0\leq \bar{w}_0$,
\begin{equation}
\label{eq:f1<L}
f_1(w_0,\gamma_2^{\star}(w_0))<f_1(w_0,\gamma_1^{\star}(w_0))=L.
\end{equation}
If we can show that for any fixed $x>0$,
\begin{equation}
\label{eq:lim-w=infty}
\lim_{w_0\to-\infty}w(x;w_0,\gamma_2^{\star}(w_0))=\infty.
\end{equation}
then, for any fixed pair $(x',x'')$ with $0<x'<x''$, we have that
\begin{align*}
&\liminf_{w_0\to-\infty}f_1(w_0,\gamma_2^{\star}(w_0))\\
&\quad=\liminf_{w_0\to-\infty}\int_0^{\infty}\big(w(x;w_0,\gamma_2^{\star}(w_0))-\ell\big)^+\,\mathrm{d}x\\
&\quad\geq \liminf_{w_0\to-\infty}\int_{x'}^{x''}\big(w(x;w_0,\gamma_2^{\star}(w_0))-\ell\big)^+\,\mathrm{d}x\\
&\quad\geq \liminf_{w_0\to-\infty}(x''-x')\min\big\{(w(x'';w_0,\gamma_2^{\star}(w_0))-\ell)^+, (w(x';w_0,\gamma_2^{\star}(w_0))-\ell)^+\big\}\\
&\quad=\infty,
\end{align*}
where the first inequality follows from $(w(x;w_0,\gamma_2^{\star}(w_0))-\ell)^+\geq 0$ for all $x\geq 0$,  the second inequality follows from the properties of $w(x;w_0,\gamma)$ in $x\in[0,\infty)$
for all cases in Lemma \ref{lem:w-property}, and the last equality follows from \eqref{eq:lim-w=infty}. This contradicts \eqref{eq:f1<L}, and so we get \eqref{eq:gamma2>gamma1}.

It remains to prove \eqref{eq:lim-w=infty}. Fix $x>0$.
First, \eqref{eq:lim-gamma=infty} implies that there exists a number $\bar{w}_0^{\dag}$ with
$\bar{w}_0^{\dag}<\bar{w}_0$
such that for all $y\in(0,x)$ and $w_0\in(-\infty,\bar{w}_0^{\dag})$,
\begin{equation}
\label{eq:>0}
\gamma^{\star}_2(w_0)-\pi(0)-h(y)\geq0.
\end{equation}
Choose any fixed $\hat{x}_1$ and $\hat{x}$ with $0<\hat{x}_1<\hat{x}<x$. Define
\begin{align}
 f(w_0, \hat{x}, \hat{x}_1)=&\min\left(-ke^{\xi (\hat{x}+K/(w_0+k))}, -\left(\frac{K}{\hat{x}_1}+k\right)e^{\xi (\hat{x}-\hat{x}_1)}\right) \label{def:f_w_0_hat_x}
\\
&\quad\quad\quad+\frac{2}{\sigma^2}\int_{\hat{x}_1}^{\hat{x}}\big[\gamma^{\star}_2(w_0)-\pi(0)-h(y)\big]e^{\xi(\hat{x}-y)}\,\mathrm{d}y. \nonumber
\end{align}
Note that it follows from \eqref{eq:lim-gamma=infty} that $f(w_0,\hat{x},\hat{x}_1)\to\infty$ as $w_0\to-\infty$. Hence, there exists a number $\hat{w}_0(\hat{x}, \hat{x}_1)$ with $\hat{w}_0(\hat{x}, \hat{x}_1)<\bar{w}_0^{\dag}$, such that for all $w_0$ with $w_0\leq \hat{w}_0(\hat{x}, \hat{x}_1)$, we have $f(w_0,\hat{x},\hat{x}_1)\geq 0$.

Next we show that for each $w_0$ with $w_0\leq \hat{w}_0(\hat{x}, \hat{x}_1)$,  there exists a number $\tilde{x}$ with $0<\tilde{x}\leq \hat{x}$ (here $\tilde{x}$ might depend on $w_0$) such that
\begin{equation}
\label{eq:w>0_new}
w(\tilde{x};w_0,\gamma_2^{\star}(w_0))\geq 0.
\end{equation}
Suppose that this does not hold. Then, there exists a number $w_0$ with $w_0\leq \hat{w}_0(\hat{x}, \hat{x}_1)$ such that $w(y;w_0,\gamma_2^{\star}(w_0))<0$ for all $y\in(0, \hat{x}]$. Then, \eqref{equ:inequality_w_x} implies that for all $y\in(0, \hat{x}]$,
\begin{equation}
\label{equ:w_w0_gamma_w0_new}
\frac{1}{2}\sigma^2 w'(y; w_0,\gamma^{\star}_2(w_0))-Mw(y; w_0,\gamma^{\star}_2(w_0))\geq \gamma^{\star}_2(w_0)-\pi(0)-h(y).
\end{equation}
We consider two cases: $\hat{x}_1\geq q(w_0,\gamma^{\star}_2(w_0))$ or $\hat{x}_1< q(w_0,\gamma^{\star}_2(w_0))$.

If $\hat{x}_1\geq q(w_0,\gamma^{\star}_2(w_0))$, then integrating \eqref{equ:w_w0_gamma_w0_new} from
$q(w_0,\gamma^{\star}_2(w_0))$ to $\hat{x}$ yields
\begin{align}
&w(\hat{x}; w_0,\gamma^{\star}_2(w_0))\label{equ:case1_new}\\
&\quad\geq-ke^{\xi (\hat{x}-q(w_0,\gamma^{\star}_2(w_0)))}
+\frac{2}{\sigma^2}\int_{q(w_0,\gamma^{\star}_2(w_0))}^{\hat{x}}\big[\gamma^{\star}_2(w_0)-\pi(0)-h(y)\big]e^{\xi(\hat{x}-y)}\, \mathrm{d}y \nonumber\\
&\quad\geq -ke^{\xi (\hat{x}+K/(w_0+k))}
+\frac{2}{\sigma^2}\int_{q(w_0,\gamma^{\star}_2(w_0))}^{\hat{x}}\big[\gamma^{\star}_2(w_0)-\pi(0)-h(y)\big]e^{\xi(\hat{x}-y)}\, \mathrm{d}y \nonumber\\
&\quad \geq-ke^{\xi (\hat{x}+K/(w_0+k))}
+\frac{2}{\sigma^2}\int_{\hat{x}_1}^{\hat{x}}\big[\gamma^{\star}_2(w_0)-\pi(0)-h(y)\big]e^{\xi(\hat{x}-y)}\, \mathrm{d}y, \nonumber
\end{align}
where $\xi=2M/\sigma^2$. The first inequality follows from
$w(q(w_0,\gamma^{\star}_2(w_0)); w_0,\gamma^{\star}_2(w_0))=-k$,
the second inequality follows from $-K/(w_0+k)<q(w_0,\gamma^{\star}_2(w_0))$ (noting that
$f_2(w_0,\gamma^{\star}_2(w_0))=-K\geq (w_0+k)q(w_0,\gamma_2^{\star}(w_0))$),
and the last inequality follows from \eqref{eq:>0} and $q(w_0,\gamma^{\star}_2(w_0))\leq \hat{x}_1$.

If $\hat{x}_1< q(w_0,\gamma^{\star}_2(w_0))$, then we have
\begin{align*}
-K&=f_2(w_0,\gamma^{\star}_2(w_0))=\int_0^{q(w_0,\gamma^{\star}_2(w_0))}\big[w(y; w_0,\gamma^{\star}_2(w_0))+k\big]\,\mathrm{d}y\\
&\leq \int_0^{\hat{x}_1}\big[w(y; w_0,\gamma^{\star}_2(w_0))+k\big]\,\mathrm{d}x
\leq \int_0^{\hat{x}_1}\big[w(\hat{x}_1; w_0,\gamma^{\star}_2(w_0))+k\big]\,\mathrm{d}y\\
&=\hat{x}_1\big[w(\hat{x}_1; w_0,\gamma^{\star}_2(w_0))+k\big]
\end{align*}
and thus
 $w(\hat{x}_1; w_0,\gamma^{\star}_2(w_0))\geq -K/\hat{x}_1-k$.
Integrating (\ref{equ:w_w0_gamma_w0_new}) from $\hat{x}_1$ to $\hat{x}$  yields
\begin{align}
&w(\hat{x}; w_0,\gamma^{\star}_2(w_0))\label{equ:case2_new}\\
&\quad\geq w(\hat{x}_1; w_0, \gamma^{\star}_2(w_0))e^{\xi (\hat{x}-\hat{x}_1)}+\frac{2}{\sigma^2}\int_{\hat{x}_1}^{\hat{x}}\big[\gamma^{\star}_2(w_0)-\pi(0)-h(y)\big]e^{\xi(\hat{x}-y)}\,\mathrm{d}y\nonumber\\
&\quad\geq-\left(\frac{K}{\hat{x}_1}+k\right)e^{\xi (\hat{x}-\hat{x}_1)}+\frac{2}{\sigma^2}\int_{\hat{x}_1}^{\hat{x}}\big[\gamma^{\star}_2(w_0)-\pi(0)-h(y)\big]e^{\xi(\hat{x}-y)}\,\mathrm{d}y. \nonumber
\end{align}

Combining \eqref{equ:case1_new} and \eqref{equ:case2_new}, it follows from \eqref{def:f_w_0_hat_x}, the definition of $\hat{w}_0(\hat{x}, \hat{x}_1)$, and $w_0\leq \hat{w}_0(\hat{x}, \hat{x}_1)$ that
\begin{align*}
&w(\hat{x}; w_0,\gamma^{\star}_2(w_0))\geq f(w_0, \hat{x}, \hat{x}_1)\geq 0,
\end{align*}
which contradicts $w(y;w_0,\gamma_2^{\star}(w_0))<0$ for all $y\in(0, \hat{x}]$.
Hence,  \eqref{eq:w>0_new} holds.
Then, using a proof similar to the proof of $\lim_{\gamma\to\infty}w(x; w_0,\gamma)=\infty$ in Lemma \ref{lem:w-property-1} \eqref{item-c-lemma-w} we easily have \eqref{eq:lim-w=infty}, which finishes the proof.
\end{proof}

\section{Optimality of the proposed policy}
\label{sec:optimality}

In this section, we prove Theorem~\ref{thm:mainresults} using the lower bound approach.
First, in \S \ref{sec:lower_bound-sub} we provide a lower bound for the cost under any admissible policy.
Then, in \S \ref{sec:optimality-sub} we show the cost under the proposed policy determined in \S\ref{sec:existence}
can achieve the lower bound and thus the proposed policy is an optimal one.

\subsection{Lower bound}
\label{sec:lower_bound-sub}

We give a lower bound for the cost under any admissible policy by using It\^{o}'s formula as follows.

\begin{proposition}
\label{prop:lower_bound}
Suppose that there exists a constant $\gamma$ and a function $f(\cdot)\in \mathscr{C}^1(\mathbb{R}^+)$
with absolutely continuous and bounded derivative $f'$ and continuous second derivative $f''$ at all but a finite number of points,
satisfying
\begin{equation}\label{equ:drift}
\frac{1}{2}\sigma^2 f''(x)+\min_{\mu\in\mathcal{U}}(\mu f'(x)+c(\mu))+h(x)\geq\gamma
\quad \text{for all $x\in\mathbb{R}^+$ at which $f''$ exists,}
\end{equation}
with
\begin{align}
&f(x)\leq f(y)+K+k(y-x) \quad \text{for $0\leq x<y$},\label{equ:impulse_up}\\
&f(x)\leq f(y)+L+\ell(x-y) \quad \text{for $0\leq y<x$}.\label{equ:impulse_down}
\end{align}
Moreover, suppose that $f$ is bounded below, i.e., there exists a finite number $f_{LB}$ such that
\begin{equation}\label{equ:f_bound_below}
f(x)\geq f_{LB}, \text{ for all }x\geq 0.
\end{equation}
Then $\mathcal{C}(x,\phi)\geq \gamma$ for any admissible policy $\phi=(\bm{\mu},\bm{Y})$ and initial state $x\in\mathbb{R}^+$.
\end{proposition}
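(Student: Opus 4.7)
The approach is the standard verification-by-lower-bound built around It\^o's formula. For any admissible $\phi=(\bm{\mu},\bm{Y})$ with controlled process $X_t$ given by \eqref{eq:X-1}, I would apply a generalized (Meyer--It\^o/Tanaka) formula to $f(X_t)$ --- valid because $f\in\mathscr{C}^1$ has absolutely continuous bounded $f'$ and a piecewise-continuous $f''$ --- and then use \eqref{equ:drift}--\eqref{equ:impulse_down} to convert each resulting term into a contribution to the cost bound.

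\textbf{Key steps.} The It\^o expansion reads
\begin{align*}
f(X_t)-f(x_0) = \int_0^t \bigl[\tfrac{1}{2}\sigma^2 f''(X_s)+\mu_s f'(X_s)\bigr]\,\mathrm{d}s+\int_0^t\sigma f'(X_s)\,\mathrm{d}B_s + \sum_{\tau_n^1\le t}\Delta_n^1 f + \sum_{\tau_n^2\le t}\Delta_n^2 f,
\end{align*}
where $\Delta_n^i f=f(X_{\tau_n^i})-f(X_{\tau_n^i-})$. Using \eqref{equ:drift} with $\mu=\mu_s\in\mathcal{U}$ gives $\tfrac{1}{2}\sigma^2 f''(X_s)+\mu_s f'(X_s)\ge \gamma-c(\mu_s)-h(X_s)$ a.e., while \eqref{equ:impulse_up} applied at each upward jump (with $x=X_{\tau_n^1-}$, $y=X_{\tau_n^1-}+\xi_n^1$) yields $\Delta_n^1 f\ge -(K+k\xi_n^1)$ and \eqref{equ:impulse_down} analogously produces $\Delta_n^2 f\ge -(L+\ell\xi_n^2)$. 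Taking expectations---the It\^o integral is a true martingale because $f'\in L^\infty$---and rearranging, I obtain the master inequality
\[
\mathbb{E}_{x_0}\bigl[\text{cost}_t\bigr]\ \ge\ \gamma t+f(x_0)-\mathbb{E}_{x_0}[f(X_t)].
\]
Dividing by $t$ and passing to $\limsup_{t\to\infty}$ yields $\mathcal{C}(x_0,\phi)\ge \gamma +\limsup_t\bigl[f(x_0)-\mathbb{E}_{x_0}[f(X_t)]\bigr]/t$.

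\textbf{Main obstacle.} The delicate piece is the closing limit, and this is exactly where hypothesis \eqref{equ:f_bound_below} is used. The bound $f\ge f_{LB}$ immediately gives $\mathbb{E}[f(X_t)]\ge f_{LB}$, hence $[f(x_0)-\mathbb{E}[f(X_t)]]/t\le [f(x_0)-f_{LB}]/t\to 0$; what is actually needed is that the $\limsup$ of this quantity be nonnegative, i.e.\ $\liminf_t\mathbb{E}[f(X_t)]/t\le 0$. I would close this by the dichotomy $\mathcal{C}(x_0,\phi)=\infty$ (trivial) versus $\mathcal{C}(x_0,\phi)<\infty$. In the second case, boundedness of $f'$ forces $f$ to grow at most linearly, and combining this with the explicit form of \eqref{eq:X-1}, the admissibility constraint $X_t\ge 0$, the compactness of $\mathcal{U}$, and bounds on $\mathbb{E}[\sum\xi_n^i]$ extracted from finiteness of impulse costs, one obtains $\mathbb{E}[f(X_t)]=O(t)$; a subsequence argument then pins $\liminf_t\mathbb{E}[f(X_t)]/t=0$ and closes the estimate. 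Thus the lower-bound hypothesis on $f$, combined with the boundedness of $f'$, is precisely what prevents the boundary term from destroying the long-run-average bound, and pinning down this sublinear behaviour of $\mathbb{E}[f(X_t)]/t$ is the technical heart of the argument.
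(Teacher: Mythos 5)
Your verification skeleton is exactly the paper's: apply It\^o's formula to $f(X_t)$ (the paper cites Proposition 1 of \cite{OrmeciDaiVandeVate2008} for the version valid under the stated regularity of $f$), bound the drift term pointwise using \eqref{equ:drift}, bound each jump using \eqref{equ:impulse_up}--\eqref{equ:impulse_down}, take expectations (the stochastic integral vanishes since $f'$ is bounded), divide by $t$, and pass to the limit. Up to and including the master inequality $\mathbb{E}_{x}[\mathrm{cost}_t]\geq \gamma t+f(x)-\mathbb{E}_x[f(X_t)]$, the two arguments coincide.

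The divergence is in the closing step, and that is where your proposal has a genuine gap. The paper disposes of the boundary term in one line, writing $\mathcal{C}(x,\phi)\geq\gamma+\limsup_{t\to\infty}\mathbb{E}_x[f(X_t)]/t$ and invoking \eqref{equ:f_bound_below} to conclude; the lower bound on $f$ is exactly the hypothesis it uses. You instead observe that the master inequality actually yields $\mathcal{C}(x,\phi)\geq\gamma-\liminf_{t\to\infty}\mathbb{E}_x[f(X_t)]/t$, so the fact required is $\liminf_{t\to\infty}\mathbb{E}_x[f(X_t)]/t\leq 0$. That is a correct and sharper reading of where the difficulty lies: when $f$ is unbounded above (as the $f^{\star}$ used in the application is), the bound $f\geq f_{LB}$ controls the boundary term only from the side that is not needed. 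The problem is that your repair does not close the argument. Establishing $\mathbb{E}_x[f(X_t)]=O(t)$ is not sufficient: $\mathbb{E}_x[f(X_t)]=ct$ with $c>0$ is $O(t)$ yet has $\liminf_t\mathbb{E}_x[f(X_t)]/t=c>0$, and no subsequence argument extracts $\liminf=0$ from an $O(t)$ bound alone. A complete argument on the finite-cost branch must exploit the model structure: since $k>0$, one has $\mathbb{E}_x\big[\sum_{n\leq N_t^1}\xi_n^1\big]\leq\tfrac{1}{k}\mathbb{E}_x[\mathrm{cost}_t]$, which together with $\mu_s\leq\bar{\mu}$ and $\xi_n^2\geq 0$ controls $\mathbb{E}_x[X_t]$; one must then show that $\liminf_t\mathbb{E}_x[X_t]/t>0$ is incompatible with a finite long-run average holding cost under Assumption \ref{assum-h}. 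That implication is precisely the technical heart you point to, but in your write-up it is asserted rather than proved, so the proposal as it stands does not establish the proposition.
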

\begin{proof}
It follows from It\^{o}'s formula (see, e.g., Proposition 1 in \cite{OrmeciDaiVandeVate2008}) and \eqref{equ:f_bound_below} that
\begin{align}
\label{equ:dynkin_formula}
\mathbb{E}_x[f(X_t)]&=\mathbb{E}_x[f(X_0)]+\mathbb{E}_x\Big[\int_0^t\Big(\frac{1}{2}\sigma^2f''(X_s)+\mu_sf(X_s)\Big)\, \mathrm{d}s\Big]\\
&\quad+\sum_{i=1}^2\mathbb{E}_x\Big[\sum_{n=1}^{N^i_t}\left(f(X_{\tau^i_n})-f(X_{\tau^i_n-})\right)\Big]. \nonumber
\end{align}
Notice that (\ref{equ:drift}) implies that $1/2\sigma^2 f''(x)+\mu(x)f'(x)+c(\mu)+h(x)\geq \gamma$ for all $x\in\mathbb{R}^+$ and $\mu\in\mathcal{U}$.
Furthermore, \eqref{equ:impulse_up} and \eqref{equ:impulse_down} imply that for each $n\geq 0$, we have
$f(X_{\tau^1_n})-f(X_{\tau^1_n-})\geq -(K+k\xi_n^1)$ and $f(X_{\tau^2_n})-f(X_{\tau^2_n-})\geq -(L+\ell\xi_n^2)$.
Therefore, we have
\begin{align*}
&\mathbb{E}_x[f(X_t)]\\
&\quad\geq\mathbb{E}_x[f(X_0)]+\mathbb{E}_x\Big[\int_0^t\big(\gamma-c(\mu_s)-h(X_s)\big)\,\mathrm{d}s\Big]
-\mathbb{E}_x\Big[\sum_{n=1}^{N^1_t}\big(K+k\xi_n^1\big)\Big]\\
&\qquad-\mathbb{E}_x\Big[\sum_{n=1}^{N^2_t}\big(L+\ell\xi_n^2\big)\Big]\\
&\quad=\mathbb{E}_x[f(X_0)]+\gamma t-\mathbb{E}_x\Big[\int_0^t \big(h(X_s)+c(\mu_s)\big)\, \mathrm{d}s
+\sum_{n=1}^{N^1_t}\big(K+k\xi_n^1\big)+\sum_{n=1}^{N^2_t}\big(L+\ell\xi_n^2\big)\Big].
\end{align*}
Dividing both sides of the above inequality by $t$ and letting $t\to\infty$ gives
\begin{equation}\label{inequ:c_u_y}
\mathcal{C}(x,\phi)\geq \gamma+\limsup_{t\to\infty}\frac{\mathbb{E}_x[f(X_t)]}{t}.
\end{equation}
It follows from (\ref{equ:f_bound_below}) that $\limsup_{t\to\infty}\mathbb{E}_x[f(X_t)]/t\geq 0$. Hence, (\ref{inequ:c_u_y}) yields $\mathcal{C}(x,\phi)\geq \gamma$.
\end{proof}

\subsection{Optimality of proposed policy $\phi^{\star}$}
\label{sec:optimality-sub}

In this subsection, Proposition \ref{prop:cost} characterizes the cost under any policy
$\phi=\{(0, q, Q, S), \{\mu(x)\in\mathcal{U}, x\in [0, S]\}\}$,
and then Theorem \ref{thm:mainresults} is proved by showing that
the cost $\gamma^{\star}$ under the proposed policy $\phi^{\star}$ can achieve the lower bound in Proposition \ref{prop:lower_bound}.
Finally, Corollary \ref{cor:unique} is proven.

\begin{proposition}
\label{prop:cost}
Consider a policy $\phi=\{(0, q, Q, S), \{\mu(x)\in\mathcal{U}, x\in [0, S]\}\}$ with $0<q\leq Q<S$.
Suppose that there exists a constant $\gamma$ and a  twice continuously differentiable function: $V: [0, S]\to\mathbb{R}$ satisfying
\begin{align}
 & \frac{1}{2}\sigma^2 V''(x)+\mu(x)V'(x)+c(\mu(x))+h(x)=\gamma
 \quad\text{for $0\leq x\leq S$,} \label{equ:poisson}
\end{align}
with boundary conditions
\begin{align}
&V(0)=V(q)+K+kq, \label{equ:bound_1}\\
&V(S)=V(Q)+L+\ell(S-Q). \label{equ:bound_2}
\end{align}
Then, the average cost $\mathcal{C}(x,\phi)=\gamma$ for any initial state $x\in\mathbb{R}^+$.
\end{proposition}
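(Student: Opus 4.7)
The strategy follows Proposition~\ref{prop:lower_bound}, with the inequality in (\ref{equ:drift}) replaced by the equality (\ref{equ:poisson}) valid under the prescribed drift rate $\mu(\cdot)$, so that It\^{o}'s formula yields an exact identity rather than a one-sided bound. First I would extend $V$ to $\mathbb{R}^+$ by setting $V(x)=V(Q)+L+\ell(x-Q)$ for $x>S$. This extension is continuous at $S$ by (\ref{equ:bound_2}) and is designed so that the $V$-difference across any initial downward jump from $x_0>S$ to $Q$ equals $-(L+\ell\xi_1^2)$; note also that $V$ and $V'$ are bounded on the compact interval $[0,S]$.

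Applying It\^{o}'s formula to $V(X_t)$ from $0-$ to $t$ gives
\begin{align*}
V(X_t)-V(x_0)
&=\int_0^t\big[\tfrac12\sigma^2V''(X_s)+\mu_sV'(X_s)\big]\,\mathrm{d}s
+\int_0^t\sigma V'(X_s)\,\mathrm{d}B_s\\
&\quad+\sum_{i=1}^2\sum_{n=1}^{N_t^i}\big[V(X_{\tau_n^i})-V(X_{\tau_n^i-})\big],
\end{align*}
where the jump sum covers all $\tau_n^i\in[0,t]$, including a possible time-$0$ jump when $x_0>S$. Under the policy $\mu_s=\mu(X_s)$, so (\ref{equ:poisson}) rewrites the drift integrand as $\gamma-c(\mu_s)-h(X_s)$. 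The boundary conditions (\ref{equ:bound_1}) and (\ref{equ:bound_2}), together with the linear extension, identify the jump differences as $-(K+k\xi_n^1)$ and $-(L+\ell\xi_n^2)$ respectively.

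Taking expectations and invoking the martingale property of the It\^{o} integral---justified because $V'$ is bounded on $[0,S]$ and $X_s\in[0,S]$ for $s>0$---then dividing by $t$ yields
\[
\frac{1}{t}\mathbb{E}_{x_0}\Big[\int_0^t(h(X_s)+c(\mu_s))\,\mathrm{d}s
+\sum_{n=1}^{N_t^1}(K+k\xi_n^1)+\sum_{n=1}^{N_t^2}(L+\ell\xi_n^2)\Big]
=\gamma+\frac{V(x_0)-\mathbb{E}_{x_0}[V(X_t)]}{t}.
\]
Since $V$ is bounded on $[0,S]$, the right side tends to $\gamma$ as $t\to\infty$, so the average cost equals $\gamma$ as a genuine limit (hence as the $\limsup$ in the definition of $\mathcal{C}$). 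The only delicate points are accommodating the initial jump when $x_0>S$---addressed by the extension of $V$---and the integrability needed for the stochastic integral to be a martingale, which is automatic from the boundedness of $V'$ on the invariant interval $[0,S]$.
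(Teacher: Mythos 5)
Your proposal is correct and takes essentially the same route as the paper: apply It\^{o}'s formula to $V(X_t)$, use \eqref{equ:poisson} to rewrite the drift integrand as $\gamma-c(\mu_s)-h(X_s)$, use \eqref{equ:bound_1}--\eqref{equ:bound_2} to identify the jump increments exactly as $-(K+k\xi_n^1)$ and $-(L+\ell\xi_n^2)$, and conclude from the boundedness of $V$ on $[0,S]$ after dividing by $t$. The only cosmetic difference is your handling of a possible time-$0$ jump from $x_0>S$, which you absorb by extending $V$ linearly beyond $S$, whereas the paper simply observes that this one-time finite cost is negligible in the long-run average and restricts attention to $x_0\in[0,S]$.
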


\begin{proof}
If the initial state $x\geq S$, there will be a one-time control to bring it to $Q$ and thus the state will stay in $[0, S]$ forever under the policy $\phi$.
The one-time finite control cost can be ignored in the long-run average cost,
thus it suffices to consider the case that the initial state $x\in[0, S]$.

Since $V$ is twice continuously differentiable on $[0, S]$, it has a bounded derivative on $[0, S]$.
Furthermore,  it follows from \eqref{equ:bound_1} and \eqref{equ:bound_2} that
under policy $\phi$, $V(X_{\tau^1_n})-V(X_{\tau^1_n-})=-(K+k\xi_n^1)$ and
$V(X_{\tau^2_n})-V(X_{\tau^2_n-})=-(L+\ell\xi_n^2)$.
Since $0\leq X_t\leq S$ for all $t>0$ under policy $\phi$,
it follows from \eqref{equ:dynkin_formula} and (\ref{equ:poisson}) that
\begin{align}
\label{eq:E-V}
\mathbb{E}_x[V(X_t)]
&=\mathbb{E}_x[V(X_0)]+\mathbb{E}_x\Big[\int_0^t \big(\frac{1}{2}\sigma^2V''(X_s)+\mu(X_s)V(X_s)\big)
\, \mathrm{d}s\Big]\\
&\quad+\sum_{i=1}^2\mathbb{E}_x\Big[\sum_{n=1}^{N^i_t}\Big(V(X_{\tau^i_n})-V(X_{\tau^i_n-})\Big)\Big]\nonumber\\
&=\mathbb{E}_x[V(X_0)]+\mathbb{E}_x\Big[\int_0^t(\gamma-c(\mu(X_s))-h(X_s))\, \mathrm{d}s\Big]\nonumber\\
&\quad-\mathbb{E}_x\Big[\sum_{n=1}^{N^1_t}\big(K+k\xi_n^1\big)\Big]
-\mathbb{E}_x\Big[\sum_{n=1}^{N^2_t}\big(L+\ell\xi_n^2\big)\Big]\nonumber\\
&=\mathbb{E}_x[V(X_0)]+\gamma t\nonumber\\
&\quad-\mathbb{E}_x\Big[\int_0^t \big(h(X_s)+c(\mu(X_s))\big)\, \mathrm{d}s
+\sum_{n=1}^{N^1_t}\big(K+k\xi_n^1\big)+\sum_{n=1}^{N^2_t}\big(L+\ell\xi_n^2\big)\Big]\nonumber
\end{align}
Note that $\min_{0\leq x\leq S}V(x)\leq V(X_t)\leq \max_{0\leq x\leq S}V(x)$,
which implies
\[
\lim_{t\to\infty}\mathbb{E}_x[V(X_t)]/t=0.
\]
Dividing both sides of \eqref{eq:E-V} by $t$ and letting $t\to\infty$, we have $\mathcal{C}(x,\phi)=\gamma$.
\end{proof}


We are now ready to prove Theorem \ref{thm:mainresults}.
Recalling the solution to \eqref{eq:ODE}-\eqref{eq:w(Q)=w(S)=l} in Theorem \ref{thm:existence}, i.e., $(w^{\star},\{q^{\star},Q^{\star},S^{\star},\gamma^{\star}\})$,
we define
\[
f^{\star}(x)=
\begin{cases}
\int_0^x w^{\star}(y)\, \mathrm{d}y & \text{for $0\leq x\leq S^{\star}$},\\
\int_0^{S^{\star}} w^{\star}(y)\, \mathrm{d}y+\ell(x-S^{\star}) & \text{for $x>S^{\star}$}.
\end{cases}
\]
\begin{proof}[Proof of Theorem~\ref{thm:mainresults}]
Since $\phi^{\star}=\{(0, q^{\star}, Q^{\star}, S^{\star}), \{\mu^{\star}(x)\in\mathcal{U}, x\in [0, S^{\star}]\}\}$ satisfies \eqref{eq:ODE}-\eqref{eq:int=L} in Theorem \ref{thm:existence},
letting $V(x)=f^{\star}(x)$ in Proposition \ref{prop:cost},
we have that  $\gamma^{\star}$ is the average cost under $\phi^{\star}$. If we can prove that $f^{\star}$ and $\gamma^{\star}$ satisfy the conditions in
Proposition \ref{prop:lower_bound}, then $\mathcal{C}(x,\phi)\geq \gamma^{\star}$
for any admissible policy $\phi\in\Pi$ and any initial state $x\in\mathbb{R}^+$,
and thus $\gamma^{\star}$ is the optimal average cost and $\phi^{\star}$ is an optimal policy.

It remains to check the conditions in Proposition \ref{prop:lower_bound}.
First,  it follows from \eqref{eq:w(Q)=w(S)=l} and the definition of $f^{\star}$ that $f^{\star}$ has an absolutely continuous and bounded derivative $\mathrm{d}f^{\star}(x)/\mathrm{d} x$.  Moreover, it follows from \eqref{eq:ODE} that $f^{\star}$ also has a continuous second derivative at all points in $\mathbb{R}^+$ except maybe $S^{\star}$.  Moreover, it follows from the definition of $f^{\star}$ and $\ell>0$ that $f^{\star}(x)\geq \min_{0\leq x\leq S^{\star}}f(x)$ for all $x\geq 0$. Hence,  \eqref{equ:f_bound_below} holds.

Next, we check \eqref{equ:drift}.  It follows from (\ref{eq:ODE})  and the definition of $f^{\star}$ that (\ref{equ:drift}) holds for $x\in [0, S^{\star}]$.
For $x>S^{\star}$, we have
 \begin{align*}
 &\frac{1}{2}\sigma^2 \frac{\mathrm{d}^2f^{\star}(x)}{\mathrm{d} x^2}
 +\min_{\mu\in\mathcal{U}}\Big(\mu \frac{\mathrm{d}f^{\star}(x)}{\mathrm{d} x}+c(\mu)\Big)+h(x)\\
 &\quad=\min_{\mu\in\mathcal{U}}\big(\mu \ell+c(\mu)\big)+h(x)\\
 &\quad>\min_{\mu\in\mathcal{U}}\big(\mu \ell+c(\mu)\big)+h(S^{\star})\\
 &\quad\geq \frac{1}{2}\sigma^2 \frac{\mathrm{d}w^{\star}(S^{\star})}{\mathrm{d} x}
 +\min_{\mu\in\mathcal{U}}\Big(\mu w^{\star}(S^{\star})+c(\mu)\Big)+h(S^{\star})\\
 &\quad=\gamma^{\star},
 \end{align*}
 where the first equality follows from $\mathrm{d}f^{\star}(x)/\mathrm{d} x=\ell$ and
 $\mathrm{d}^2f^{\star}(x)/\mathrm{d} x^2=0$, the first inequality follows from
 Assumption \ref{assum-h} and $x>S^{\star}$, the second inequality follows from
 \eqref{eq:w(Q)=w(S)=l} and the fact that $w^{\star}(x)$ is strictly decreasing in $x$ at $S^{\star}$,
 and the second equality follows from \eqref{eq:ODE} with $x=S^{\star}$.

We next check \eqref{equ:impulse_up}.
For $0\leq x<y$, the proof is divided into three cases: $x>S^{\star}$, $0\leq x<y\leq S^{\star}$,
or $0\leq x\leq S^{\star}<y$.
If $x>S^{\star}$, we have $f^{\star}(y)+K+k(y-x)-f^{\star}(x)=(\ell+k)(y-x)+K>0$.
If $0\leq x<y\leq S^{\star}$, we have
\begin{equation}
\label{eq:f>0}
 f^{\star}(y)+K+k(y-x)-f^{\star}(x)
 =\int_x^y(w^{\star}(z)+k)\, \mathrm{d}z+K
 \geq \int_0^{q^{\star}}(w^{\star}(z)+k)\, \mathrm{d}z+K=0,
 \end{equation}
where the inequality follows from $w^{\star}(x)\leq -k$ for $0\leq x\leq q^{\star}$ and $w^{\star}(x)\geq -k$ for $q^{\star}\leq x\leq S^{\star}$ (see Theorem~\ref{thm:existence}), and the last equality follows from \eqref{eq:int=-K}.
When $0\leq x\leq S^{\star}<y$, we have
\[
f^{\star}(y)+K+k(y-x)-f^{\star}(x)
=f^{\star}(S^{\star})+K+k(S^{\star}-x)-f^{\star}(x)+(\ell+k)(y-S^{\star})
\geq 0,
\]
where the equality follows from $f^{\star}(y)=f^{\star}(S^{\star})+\ell(y-S^{\star})$
and the inequality follows from $f(S^{\star})+K+k(S^{\star}-x)-f(x)\geq 0$
(using \eqref{eq:f>0} with $y=S^{\star}$).

Finally, we check that  \eqref{equ:impulse_down} holds for all $0\leq y<x$.
This proof is also divided into three cases: $y>S^{\star}$, $0\leq y<x\leq S^{\star}$,
or $0\leq y\leq S^{\star}<x$.
If $y>S^{\star}$, we have $f(y)+L+\ell(x-y)-f(x)=L>0$.
If $0\leq y<x\leq S^{\star}$, we have
\begin{equation}
\label{eq:f>0-2}
f(y)+L+\ell(x-y)-f(x)
=-\int_y^x(w^{\star}(z)-\ell)\, \mathrm{d}z+L
\geq -\int_{Q^{\star}}^{S^{\star}}(w^{\star}(z)-\ell)\, \mathrm{d}z+L
=0,
\end{equation}
 where the inequality is due to $w^{\star}(x)\leq \ell$ for $0\leq x\leq Q^{\star}$ and $w^{\star}(x)\geq \ell$ for $Q^{\star}\leq x\leq S^{\star}$ (see Theorem~\ref{thm:existence}), and the last equality follows from \eqref{eq:int=L}.
If $0\leq y\leq S^{\star}<x$, we have
\[
f(y)+L+\ell(x-y)-f(x)=f(y)+L+\ell(S^{\star}-y)-f(S^{\star})\geq 0,
\]
where the inequality follows from \eqref{eq:f>0-2} by letting $x=S^{\star}$.
\end{proof}

Finally, we prove Corollary \ref{cor:unique}.
\begin{proof}[Proof of Corollary \ref{cor:unique}]
It follows from Theorem 2 that $\gamma^{\star}$ is the optimal cost and thus it is unique in Theorem \ref{thm:existence}. Recalling $\gamma^{\star}=\gamma_1(w_0^{\star})$ and that $\gamma_1(w_0)$ is strictly decreasing in $w_0\in(-\infty,\ell)$ (see Lemma \ref{lem:gamma1}), we see that $w_0^{\star}$ must be unique
and so $q^{\star}, Q^{\star}, S^{\star}$ are all unique (see Remark \ref{rem:uniqueness}). Finally, the uniqueness of $w^{\star}$ follows from Lemma \ref{lem:w-existence} \eqref{item-a-lemma-w}.
\end{proof}

\section{Concluding remarks}
\label{sec:concluding}

In this paper, we considered a  joint drift rate control and impulse control problem for a Brownian inventory/production system with the objective of minimizing the long-run average cost. We proved that an optimal policy has the $\{(0,q^{\star},Q^{\star},S^{\star}),\{\mu^{\star}(x): x\in[0,S^{\star}]\}\}$ structure by using the lower bound approach. The existence of the optimal policy parameters was shown by solving a free boundary problem, which is crucial in this paper. We provided a roadmap to solve this free boundary problem, which we believe can be used in other control problems with more general processes.

We next discuss one extension to our model. We have assumed in this paper that the inventory level must be nonnegative. In the analysis, the nonnegative constraint is needed to solve the ODE with initial condition $w(0)=w_0$.
In fact, backlog is also allowed in many inventory problems.
In the case of backlog, a similar result, such as the optimality of a $\{(d^{\star},D^{\star},U^{\star},u^{\star}),\{\mu^{\star}(x): x\in[d^{\star},u^{\star}]\}\}$ policy with control band policy $(d^{\star},D^{\star},U^{\star},u^{\star})$ and drift rate control $\{\mu^{\star}(x): x\in[d^{\star},u^{\star}]\}$, is expected. However, analyzing the ODE from $x=0$ may be inappropriate. We may start from a point sufficiently small or large, so that all policy parameters can fall in the same side of this point, just like what have done in this paper.

There are several directions worthy of future research.
First, future analysis could discuss a system whose netput inventory level is given by a compound Poisson demand process plus a Brownian motion with changeable drift.
\cite{BenkheroufBensoussan2009} and \cite{BensoussanLiuSethi2005} consider this model when the drift rate is a constant and find a solution to the corresponding quasi-variational inequality by using the Laplace transform. However, in the presence of drift rate control, the previous analysis method can not work directly and a new analytical method must be introduced to handle this problem.
Second, future research could also consider the problem of minimizing the discounted total cost rather than minimizing long-run average cost.
In the discounted cost case, the ODE in dynamic programming equation will no longer be one-order, but will have a two-order form as follows
\[
\frac{1}{2}\sigma^2 V''(x)+\pi(V'(x))-\alpha V(x)+h(x)=0,
\]
where $V(\cdot)$ is the optimal value function and $\alpha$ is the discount rate. With such an ODE, the proof for the existence of optimal policy parameters requires an argument different from the one presented here.

\appendix
\section{Proof of Lemma \ref{lem:property_pi_mu}}
\label{app:proof-lemma-1}
\begin{proof}
Since $\mu w+c(\mu)$ is linear in $w$ for any fixed $\mu\in\mathcal{U}$,
we know that $\pi(w)$ is concave in $w\in\mathbb{R}$ because the concavity is preserved under the minimization operator; see e.g., \S3.2.3 in
\cite{BoydVandenberghe2004}.

Fixing any $w_1<w_2$, we have
\begin{align*}
\abs{\pi(w_2)-\pi(w_1)}
&=\abs{\min_{\mu\in\mathcal{U}}(\mu w_2+c(\mu))-\min_{\mu\in\mathcal{U}}(\mu w_1+c(\mu))}\\
&\leq\max_{\mu\in\mathcal{U}}\abs{(\mu w_2+c(\mu))-(\mu w_1+c(\mu))}\\
&=\max_{\mu\in\mathcal{U}}\abs{\mu(w_2-w_1)}\\
&=\max\{\abs{\underline{\mu}}, \abs{\bar{\mu}}\}(w_2-w_1)\\
&=M(w_2-w_1),
\end{align*}
and thus $\pi(w)$ is Lipschitz continuous.

Denote $\mu_1=\mu(w_1)$ and $\mu_2=\mu(w_2)$. By the definition of $\mu(w)$, we have
\begin{equation*}
\mu_1w_1+c(\mu_1)\leq \mu_2w_1+c(\mu_2) \quad \text{and}\quad
\mu_2w_2+c(\mu_2)\leq \mu_1w_2+c(\mu_1).
\end{equation*}
Summing these two inequalities, we have $(\mu_2-\mu_1)(w_2-w_1)\leq 0$ and thus $\mu_2\leq \mu_1$ if $w_1<w_2$.
Hence, $\mu(w)$ is decreasing in $w$.
\end{proof}

\section{Auxiliary proof for Lemma \ref{lem:gamma2}}
\label{app:auxiliary}
\begin{proof}
First, we prove that $f_2(w_0,\gamma_2(w_0))$ is strictly increasing in $w_0\in(-\infty,-k)$.
Let $w_0^{\dag}$ and $w_0^{\ddag}$ be any two numbers satisfying  $w_0^{\dag}<w_0^{\ddag}<-k$.
If we can prove
\begin{align}
&q(w_0^{\dag},\gamma_2(w_0^{\dag}))>q(w_0^{\ddag},\gamma_2(w_0^{\ddag}))
\quad\text{and}\label{eq:q>q}\\
&w(x;w_0^{\dag},\gamma_2(w_0^{\dag}))<w(x;w_0^{\ddag},\gamma_2(w_0^{\ddag}))
\quad\text{for all $x\in[0,q(w_0^{\ddag},\gamma_2(w_0^{\ddag}))]$},
\label{eq:w<w}
\end{align}
then
\begin{align*}
f_2(w_0^{\dag},\gamma_2(w_0^{\dag}))
&=\int_{0}^{q(w_0^{\dag},\gamma_2(w_0^{\dag}))}
\big[w(x;w_0^{\dag},\gamma_2(w_0^{\dag}))+k\big]\,\mathrm{d}x\\
&<\int_{0}^{q(w_0^{\ddag},\gamma_2(w_0^{\ddag}))}
\big[w(x;w_0^{\dag},\gamma_2(w_0^{\dag}))+k\big]\,\mathrm{d}x\\
&<\int_{0}^{q(w_0^{\ddag},\gamma_2(w_0^{\ddag}))}
\big[w(x;w_0^{\ddag},\gamma_2(w_0^{\ddag}))+k\big]\,\mathrm{d}x\\
&=f_2(w_0^{\ddag},\gamma_2(w_0^{\ddag})),
\end{align*}
where the first inequality follows from \eqref{eq:q>q}
and $w(x;w_0^{\dag},\gamma_2(w_0^{\dag}))+k<0$ for all
$x\in[0,q(w_0^{\dag},\gamma_2(w_0^{\dag}))]$ (see \eqref{eq:q(w0,gamma)}),
and the second inequality follows from \eqref{eq:w<w}.
Thus, $f_2(w_0,\gamma_2(w_0))$ is strictly increasing in $w_0\in(-\infty,-k)$.

We next prove \eqref{eq:q>q}.
Recalling the definition of $x^{\star}(w_0,\gamma)$ in Lemma \ref{lem:w-property} \eqref{solution_x_3}
and \eqref{eq:w=-k-lemma}, we have
\[
\max_{x\geq0} w(x;w_0,\gamma_2(w_0))=-k,
\]
which, together with Lemma \ref{lem:w-property-1} \eqref{item-c-lemma-w} and \eqref{item-d-lemma-w},
implies that
\begin{equation}
\label{eq:gamma2-decreasing}
\gamma_2(w_0)
\text{ is strictly decreasing in $w_0\in(-\infty,-k)$}.
\end{equation}
Furthermore, it follows from \eqref{eq:w=-k-lemma} and \eqref{eq:q(w0,gamma)} that
\begin{equation}
\label{eq:w_q_w_gamma}
w(q(w_0,\gamma_2(w_0));w_0,\gamma_2(w_0))
=w(x^{\star}(w_0,\gamma_2(w_0));w_0,\gamma_2(w_0))=-k,
\end{equation}
which together with
the properties of $w$, yields
\[
q(w_0,\gamma_2(w_0))=x^{\star}(w_0,\gamma_2(w_0)).
\]
Then,
\[
w'(q(w_0,\gamma_2(w_0));w_0,\gamma_2(w_0))=w'(x^{\star}(w_0,\gamma_2(w_0));w_0,\gamma_2(w_0))
=0.
\]
Thus, taking $\gamma=\gamma_2(w_0)$ and $x=q(w_0,\gamma_2(w_0))$ in \eqref{equ:ode_new},
we have
\begin{equation}
\label{equ:q_w_gamma}
\pi(-k)+h(q(w_0,\gamma_2(w_0))=\gamma_2(w_0),
\end{equation}
which, together with the monotonicity of $h(x)$ and $\gamma_2(w_0)$ (see \eqref{eq:gamma2-decreasing}),
implies \eqref{eq:q>q}.

We next prove \eqref{eq:w<w}.  Lemma \ref{lem:w-property} \eqref{solution_x_3} and \eqref{eq:q>q}
imply that
\begin{align}
\label{eq:w<w-q}
w(q(w_0^{\ddag},\gamma_2(w_0^{\ddag}));w_0^{\dag},\gamma_2(w_0^{\dag}))
&<w(q(w_0^{\dag},\gamma_2(w_0^{\dag}));w_0^{\dag},\gamma_2(w_0^{\dag}))\\
&=-k\nonumber\\
&=w(q(w_0^{\ddag},\gamma_2(w_0^{\ddag}));w_0^{\ddag},\gamma_2(w_0^{\ddag}))\nonumber
\end{align}
Define $f_3(x)=w(x;w_0^{\ddag},\gamma_2(w_0^{\ddag}))-w(x;w_0^{\dag},\gamma_2(w_0^{\dag}))$.
Then
\[
f_3(0)=w_0^{\ddag}-w_0^{\dag}>0\quad\text{and}\quad
f_3(q(w_0^{\ddag},\gamma_2(w_0^{\ddag})))>0,
\]
where the inequality follows from \eqref{eq:w<w-q}.
Suppose \eqref{eq:w<w} does not hold. Then, by the continuity of $f_3(\cdot)$, there exists a number
$x_1^{\dag}\in(0,q(w_0^{\ddag},\gamma_2(w_0^{\ddag})))$ such that
$f_3(x_1^{\dag})=0$.
It follows from  \eqref{equ:ode_new} that
\begin{align*}
&w'(x_1^{\dag};w_0^{\dag},\gamma_2(w_0^{\dag}))
+\pi(w(x_1^{\dag};w_0^{\dag},\gamma_2(w_0^{\dag})))+h(x_1^{\dag})=\gamma_2(w_0^{\dag})\quad
\text{and}\\
&w'(x_1^{\dag};w_0^{\ddag},\gamma_2(w_0^{\ddag}))
+\pi(w(x_1^{\dag};w_0^{\ddag},\gamma_2(w_0^{\ddag})))+h(x_1^{\dag})=\gamma_2(w_0^{\ddag}),
\end{align*}
which, together with $f_3(x_1^{\dag})=0$, imply that
\begin{equation}
\label{f3'<0}
f_3'(x_1^{\dag})=\gamma_2(w_0^{\ddag})-\gamma_2(w_0^{\dag})<0,
\end{equation}
where the inequality follows from \eqref{eq:gamma2-decreasing}.
Hence, there exists an $x_2^{\dag}\in(x_1^{\dag},q(w_0^{\ddag},\gamma_2(w_0^{\ddag})))$ such that
$f_3(x_2^{\dag})<0$, which together with $f_3(q(w_0^{\ddag},\gamma_2(w_0^{\ddag})))>0$,
implies that there must exist an $x_3^{\dag}\in(x_2^{\dag},q(w_0^{\ddag},\gamma_2(w_0^{\ddag})))$
such that
\begin{equation}
\label{eq:f3'>0}
f_3(x_3^{\dag})=0\quad\text{and}\quad f_3'(x_3^{\dag})>0.
\end{equation}
Furthermore, considering $f_3(x_3^{\dag})=0$ in a way, similar to the analysis of \eqref{f3'<0},  the first part of \eqref{eq:f3'>0} implies
$f_3'(x_3^{\dag})<0$, which contradicts the second part of \eqref{eq:f3'>0}.
Thus, we have proven \eqref{eq:w<w} and have finished the proof
that $f_2(w_0,\gamma_2(w_0))$ is strictly increasing in $w_0\in(-\infty,-k)$.

Letting $w_0\to-k$,  it follows from the definition of $q(w_0,\gamma)$ that
$q(w_0,\gamma_2(w_0))\to0$ and thus
$\lim_{w_0\to-k}f_2(w_0,\gamma_2(w_0))=0$.

We next prove the second part of \eqref{eq:limf2}.
First, we show that
\begin{equation}
\label{eq:lim-gamma2=infty}
\lim_{w_0\to-\infty}\gamma_2(w_0)=\infty.
\end{equation}
If it fails to hold, then \eqref{eq:gamma2-decreasing} implies that
there exists a finite number $\bar{\gamma}_2^{\ddag}$ such that
$\gamma_2(w_0)\leq\bar{\gamma}_2^{\ddag}$ for all $w_0<-k$.
Hence, it follows from \eqref{equ:q_w_gamma} and Assumption~\ref{assum-h} that there exists a finite number $\bar{q}^{\ddag}$
such that $q(w_0,\gamma_2(w_0))\leq \bar{q}^{\ddag}$ for all $w_0<-k$.
Therefore, we have that for all $w_0<-k$,
\begin{equation}
\label{inequality:w_gamma2}
w(x^{\star}(w_0,\gamma_2(w_0)); w_0, \gamma_2(w_0))
=\max_{0\leq x\leq \bar{q}^{\ddag}}w(x; w_0, \gamma_2(w_0))
\leq \max_{0\leq x\leq \bar{q}^{\ddag}}w(x; w_0, \bar{\gamma}_2^{\ddag}),
\end{equation}
where the equality follows from
$x^{\star}(w_0,\gamma_2(w_0))=q(w_0,\gamma_2(w_0))\leq \bar{q}^{\ddag}$
(see \eqref{eq:w=-k-lemma} and \eqref{eq:q(w0,gamma)})
and Lemma~\ref{lem:w-property} \eqref{solution_x_3},
and the inequality follows from $\gamma_2(w_0)\leq\bar{\gamma}_2^{\ddag}$ and
Lemma~\ref{lem:w-property-1} \eqref{item-c-lemma-w}.
Hence, it follows from \eqref{eq:w_q_w_gamma} that
\[\max_{0\leq x\leq \bar{q}^{\ddag}}w(x; w_0, \bar{\gamma}_2^{\ddag})\geq -k
\]
for all $w_0<-k$. However, it follows from
Lemma~\ref{lem:w-property-1} \eqref{item-d-lemma-w} that
$w(x; w_0, \bar{\gamma}_2^{\ddag})\to-\infty$
as $w_0\to-\infty$ for any $x\in[0,\bar{q}^{\ddag}]$, which is a contradiction.
Therefore, \eqref{eq:lim-gamma2=infty} holds.

Using  \eqref{equ:q_w_gamma}
and $\lim_{w_0\to-\infty}\gamma_2(w_0)=\infty$,
we have  $\lim_{w_0\to-\infty}q(w_0,\gamma_2(w_0))=\infty$.
If the second part of \eqref{eq:limf2} fails to hold, then
the fact that $f_2(w_0,\gamma_2(w_0))$ is strictly increasing in $w_0\in(-\infty,-k)$
implies that there exists a number $\underline{f}$ such that $f_2(w_0, \gamma_2(w_0))\geq\underline{f}$ for all $w_0<-k$.
For any fixed pair $(x_4^{\dag},x_5^{\dag})$ with  $0<x_4^{\dag}<x_5^{\dag}$,
$\lim_{w_0\to-\infty}q(w_0,\gamma_2(w_0))=\infty$ implies that
there exists a $w_0^{\sharp}$ such that for any $w_0<w_0^{\sharp}$,
\[
q(w_0,\gamma_2(w_0))>x_5^{\dag}>x_4^{\dag}.
\]
Then, it follows from Lemma~\ref{lem:w-property} ($\ref{solution_x_3}$) that
\begin{align*}
\underline{f}&\leq f_2(w_0,\gamma_2(w_0))
=\int_0^{q(w_0,\gamma_2(w_0))}\big[w(x; w_0,\gamma_2(w_0))+k\big]\, \mathrm{d}x\\
&\leq \int_0^{x_4^{\dag}}\big[w(x_4^{\dag}; w_0,\gamma_2(w_0))+k\big]\, \mathrm{d}x
=x_4^{\dag}\big[w(x_4^{\dag}; w_0,\gamma_2(w_0))+k\big],
\end{align*}
which implies that $w(x_4^{\dag}; w_0,\gamma_2(w_0))\geq\underline{f}/x_4^{\dag}-k$.
Furthermore, it follows from \eqref{equ:inequality_w_x} and
$w(q(w_0,\gamma_2(w_0)); w_0,\gamma_2(x_0)))=-k<0$ that for any  $w_0<w_0^{\sharp}$
and $x\in [0, q(w_0,\gamma_2(w_0))]$,
\begin{equation*}
\frac{1}{2}\sigma^2 w'(x; w_0,\gamma_2(w_0))-Mw(x; w_0,\gamma_2(w_0))\geq \gamma_2(w_0)-\pi(0)-h(x),
\end{equation*}
which yields that for $w_0<w_0^{\sharp}$,
\begin{align*}
&w(x_5^{\dag}; w_0,\gamma_2(w_0))\\
&\quad\geq w(x_4^{\dag}; w_0, \gamma_2(w_0))e^{\xi (x_5^{\dag}-x_4^{\dag})}
+\frac{2}{\sigma^2}\int_{x_4^{\dag}}^{x_5^{\dag}}\big[\gamma_2(w_0)-\pi(0)-h(y)\big]
e^{\xi(x_5^{\dag}-y)}
\, \mathrm{d}y\\
&\quad\geq\Big(\frac{\underline{f}}{x_4^{\dag}}-k\Big)e^{\xi (x_5^{\dag}-x_4^{\dag})}
+\frac{2}{\sigma^2}\int_{x_4^{\dag}}^{x_5^{\dag}}\big[\gamma_2(w_0)-\pi(0)-h(y)\big]
e^{\xi(x_5^{\dag}-y)}\, \mathrm{d}y,
\end{align*}
where $\xi=2M/\sigma^2$.
Therefore, \eqref{eq:lim-gamma2=infty} immediately implies
\[
\lim_{w_0\to-\infty} w(x_5^{\dag}; w_0,\gamma_2(w_0))=\infty.
\]
However, it follows from $q(w_0,\gamma_2(w_0))>x_5^{\dag}$ and Lemma~\ref{lem:w-property} \eqref{solution_x_3} that $w(x_5^{\dag}; w_0,\gamma_2(w_0))\leq w(q(w_0,\gamma_2(w_0)); w_0,\gamma_2(w_0))=-k$. This contradiction implies
the second part of  \eqref{eq:limf2}.
\end{proof}

\section*{Acknowledgments}
We thank Professor Hanqin Zhang at the National University of Singapore for discussions about this problem.

\bibliographystyle{ormsv080}
\bibliography{yao,cao}

\begin{thebibliography}{31}
\expandafter\ifx\csname natexlab\endcsname\relax\def\natexlab#1{#1}\fi
\expandafter\ifx\csname url\endcsname\relax
  \def\url#1{{\tt #1}}\fi
\expandafter\ifx\csname urlprefix\endcsname\relax\def\urlprefix{URL }\fi
\expandafter\ifx\csname urlstyle\endcsname\relax
  \expandafter\ifx\csname doi\endcsname\relax
  \def\doi#1{doi:\discretionary{}{}{}#1}\fi \else
  \expandafter\ifx\csname doi\endcsname\relax
  \def\doi{doi:\discretionary{}{}{}\begingroup \urlstyle{rm}\Url}\fi \fi

\bibitem[{Adkins and Davidson(2012)}]{AdkinsDavidson_2012}
Adkins, William~A., Mark~G. Davidson. 2012.
\newblock {\it Ordinary Differential Equations\/}.
\newblock Undergraduate Texts in Mathematics, Springer.

\bibitem[{Ata et~al.(2005)Ata, Harrison, and Shepp}]{AtaHarrisonShepp2005}
Ata, B., J.~M. Harrison, L.~A. Shepp. 2005.
\newblock Drift rate control of a {Brownian} processing system.
\newblock {\it The Annals of Applied Probability\/} {\bf 15}(2) 1145--1160.

\bibitem[{Ata and Tongarlak(2013)}]{AtaTongarlak2013}
Ata, B., M.~H. Tongarlak. 2013.
\newblock On scheduling a multiclass queue with abandonments under general
  delay costs.
\newblock {\it Queueing Systems\/} {\bf 74}(1) 65--104.

\bibitem[{Avram and Karaesmen(1996)}]{AvramKaraesmen1996}
Avram, F., F.~Karaesmen. 1996.
\newblock A method for computing double band policies for switching between two
  diffusions.
\newblock {\it Probability in the Engineering and Informational Sciences\/}
  {\bf 10}(4) 569--590.

\bibitem[{Benkherouf and Bensoussan(2009)}]{BenkheroufBensoussan2009}
Benkherouf, L., A.~Bensoussan. 2009.
\newblock Optimality of an {($s,S$)} policy with compound {Poisson} and
  diffusion demands: a quasi-variational inequalities approach.
\newblock {\it SIAM Journal on Control and Optimization\/} {\bf 48}(2)
  756--762.

\bibitem[{Bensoussan et~al.(2005)Bensoussan, Liu, and
  Sethi}]{BensoussanLiuSethi2005}
Bensoussan, A., R.~H. Liu, S.~P. Sethi. 2005.
\newblock Optimality of an {$(s,S)$} policy with compound {P}oisson and
  diffusion demands: a quasi-variational inequalities approach.
\newblock {\it SIAM Journal on Control and Optimization\/} {\bf 44}(5)
  1650--1676.

\bibitem[{Boyd and Vandenberghe(2004)}]{BoydVandenberghe2004}
Boyd, S., L.~Vandenberghe. 2004.
\newblock {\it Convex Optimization\/}.
\newblock Cambridge University Press, Cambridge.

\bibitem[{Bradley(2004)}]{Bradley2004}
Bradley, J.~R. 2004.
\newblock A brownian approximation of a production-inventory system with a
  manufacturer that subcontracts.
\newblock {\it Operations Research\/} {\bf 52}(5) 765--784.

\bibitem[{Chen et~al.(2010)Chen, Wu, and Yao}]{ChenWuYao2010}
Chen, H., O.~Q. Wu, D.~Yao. 2010.
\newblock On the benefit of inventory-based dynamic pricing strategies.
\newblock {\it Production and Operations Management\/} {\bf 19}(3) 249--260.

\bibitem[{Chernoff and Petkau(1978)}]{ChernoffPetkau1978}
Chernoff, H., A.~J. Petkau. 1978.
\newblock Optimal control of a {Brownian} motion.
\newblock {\it SIAM Journal on Applied Mathematics\/} {\bf 34}(4) 717--731.

\bibitem[{Constantinides(1976)}]{Constantinides1976}
Constantinides, G.~M. 1976.
\newblock Stochastic cash management with fixed and proportional transaction
  costs.
\newblock {\it Management Science\/} {\bf 22}(12) 1320--1331.

\bibitem[{Constantinides and Richard(1978)}]{ConstantinidesRichard1978}
Constantinides, G.~M., S.~Richard. 1978.
\newblock Existence of optimal simple policies for discounted-cost inventory
  and cash management in continous time.
\newblock {\it Operations Research\/} {\bf 26}(4) 620--636.

\bibitem[{Dai and Yao(2013{\natexlab{a}})}]{DaiYao2013a}
Dai, J.~G., D.~Yao. 2013{\natexlab{a}}.
\newblock Brownian inventory models with convex holding cost, part 1:
  average-optimal controls.
\newblock {\it Stochastic Systems\/} {\bf 3}(2) 442--499.

\bibitem[{Dai and Yao(2013{\natexlab{b}})}]{DaiYao2013b}
Dai, J.~G., D.~Yao. 2013{\natexlab{b}}.
\newblock Brownian inventory models with convex holding cost, part 2:
  discount-optimal controls.
\newblock {\it Stochastic Systems\/} {\bf 3}(2) 500--573.

\bibitem[{Doshi(1978)}]{Doshi1978}
Doshi, B.~T. 1978.
\newblock Two-mode control of {Brownian} motion with quadratic loss and
  switching costs.
\newblock {\it Stochastic Processes and their Applications\/} {\bf 6}(3)
  277--289.

\bibitem[{Feng and Muthuraman(2010)}]{FengMuthuraman2010}
Feng, Haolin, Kumar Muthuraman. 2010.
\newblock A computational method for stochastic impulse control problems.
\newblock {\it Mathematics of Operations Research\/} {\bf 35}(4) 830--850.

\bibitem[{Fleming and Soner(2006)}]{FlemingSoner2006}
Fleming, W.~H., H.~M. Soner. 2006.
\newblock {\it Controlled Markov Processes and Viscosity Solutions\/}.
\newblock Springer, USA.

\bibitem[{Ghosh and Weerasinghe(2007)}]{GhoshWeerasinghe2007}
Ghosh, A.~P., A.~P. Weerasinghe. 2007.
\newblock Optimal buffer size for a stochastic processing network in heavy
  traffic.
\newblock {\it Queueing Systems\/} {\bf 55}(3) 147--159.

\bibitem[{Harrison et~al.(1983)Harrison, Sellke, and
  Taksar}]{HarrisonSellkeTaksar1983}
Harrison, J.~M., T.~M. Sellke, M.~I. Taksar. 1983.
\newblock Impulse control of {Brownian} motion.
\newblock {\it Mathematics of Operations Research\/} {\bf 8}(3) 454--466.

\bibitem[{Hsieh and Sibuya(1999)}]{HsiehSibuya_1999}
Hsieh, Po-Fang, Yasutaka Sibuya. 1999.
\newblock {\it Basic Theory of Ordinary Differential Equations\/}.
\newblock Universitext, Springer.

\bibitem[{Li and Zheng(2006)}]{LiZheng2006}
Li, Q., S.~Zheng. 2006.
\newblock Joint inventory replenishment and pricing control for systems with
  uncertain yield and demand.
\newblock {\it Operations Research\/} {\bf 54}(4) 696--705.

\bibitem[{Ormeci et~al.(2008)Ormeci, Dai, and
  Vande~Vate}]{OrmeciDaiVandeVate2008}
Ormeci, M., J.~G. Dai, J.~Vande~Vate. 2008.
\newblock Impulse control of {B}rownian motion: the constrained average cost
  case.
\newblock {\it Operations Research\/} {\bf 56}(3) 618--629.

\bibitem[{Ormeci and Vande~Vate(2011)}]{OrmeciVandeVate2011}
Ormeci, M., J.~Vande~Vate. 2011.
\newblock Drift control with changeover costs.
\newblock {\it Operations Research\/} {\bf 59}(2) 427--439.

\bibitem[{Ormeci et~al.(2015)Ormeci, Vate, and Wang}]{OrmeciVandeVateWang2015}
Ormeci, M., J.~Vande Vate, H.~Wang. 2015.
\newblock Solving the drift control problem.
\newblock {\it Stochastic Systems\/} {\bf 5}(2) 324--371.

\bibitem[{Rath(1977)}]{Rath1977}
Rath, J.~H. 1977.
\newblock The optimal policy for a controlled {Brownian} motion process.
\newblock {\it SIAM Journal of Applied Mathmatics\/} {\bf 32}(1) 115--125.

\bibitem[{Richard(1977)}]{Richard1977}
Richard, Scott~F. 1977.
\newblock Optimal impulse control of a diffusion process with both fixed and
  proportional costs of control.
\newblock {\it SIAM Journal of Control and Optimization\/} {\bf 15}(1) 79--91.

\bibitem[{S.Kumagai(1980)}]{Kumagai1980JOTA}
S.Kumagai. 1980.
\newblock An implicit function theorem: Comment.
\newblock {\it Journal of Optimization Theory and Applications\/} {\bf 31}(2)
  285--288.

\bibitem[{Song et~al.(2012)Song, Yin, and Zhu}]{SongYinZhu2012}
Song, Q., G.~G. Yin, C.~Zhu. 2012.
\newblock Optimal switching with constraints and utility maximization of an
  indivisible market.
\newblock {\it SIAM Journal on Control and Optimization\/} {\bf 50}(2)
  629--651.

\bibitem[{Wu and Chao(2014)}]{WuChao2014}
Wu, J., X.~Chao. 2014.
\newblock Optimal control of a {Brownian} production/inventory system with
  average cost criterion.
\newblock {\it Mathematics of Operations Research\/} {\bf 39}(1) 163--189.

\bibitem[{Yao(2017)}]{Yao2017}
Yao, D. 2017.
\newblock Joint pricing and inventory control for a stochastic inventory system
  with {Brownian} motion demand.
\newblock {\it IISE Transactions\/} To appear.

\bibitem[{Zhang and Zhang(2012)}]{ZhangZhang2012}
Zhang, H., Q.~Zhang. 2012.
\newblock An optimal inventory-price coordination policy.
\newblock {\it Stochastic Process, Finance and Control, Advances in Statistics,
  Probability and Actuarial Science: Volume 1. Edited by S. N. Cohen, D. Madan,
  T. K. Siu and H. Yang\/}  571--585.

\end{thebibliography}
\end{document}